\documentclass[11pt,reqno]{amsart}
\usepackage[colorlinks, linkcolor=magenta, citecolor=magenta, urlcolor=red!80!gray, pagebackref]{hyperref}
\usepackage{amssymb}
\usepackage{amsmath,pifont}
\usepackage{mathtools}
\usepackage[all]{xy}
\usepackage{mdwtab}
\usepackage{subcaption}
\usepackage{stmaryrd}
\usepackage{tikz}

\usetikzlibrary{arrows.meta}
\usetikzlibrary{positioning,fit,calc}
\usepackage[indent]{parskip}
\usepackage{url}
\usepackage{enumitem}
\usepackage{fullpage}
\usepackage{pstricks,pst-node}
\usepackage[mark=o,edge length=1.5cm,root-radius=0.18cm]{dynkin-diagrams}
\usepackage{bm}
\usepackage{threeparttable}
\usepackage{pifont}
\usepackage{amssymb}
\usepackage{makecell}

\usepackage{caption}

\usepackage{graphicx}
\usepackage{genyoungtabtikz}
\usepackage{booktabs}
\usepackage{comment, amsgen, amscd, xspace, epsfig, float, epic}

\usepackage[left=1.2in, right=1.2in, top=1in, bottom=1in]{geometry}

\usepackage{multirow}
\usepackage{tikz}
\usepackage{threeparttable}
\usepackage{enumitem}
\usepackage{fancyhdr}

\usepackage{etoolbox}
\patchcmd{\section}{\scshape}{\bfseries}{}{}
\patchcmd{\subsubsection}{\itshape}{\bfseries}{}{}
\makeatletter
\renewcommand{\@secnumfont}{\bfseries}
\makeatother
\xyoption{all}
\theoremstyle{plain}

\theoremstyle{definition}
\newtheorem{Thm}[equation]{Theorem}
\newtheorem{Lem}[equation]{Lemma}
\newtheorem{Cor}[equation]{Corollary}
\newtheorem{Rem}[equation]{Remark}
\newtheorem{Prop}[equation]{Proposition}
\newtheorem{Exam}[equation]{Example}
\newtheorem{Def}[equation]{Definition}

\newcommand{\hobox}[3]{\draw (0+#1,0-#2) rectangle (1+#1,-1-#2)++(-0.5,+0.5) node {$ #3$};}

\newcommand{\domscale}{0.4}

\numberwithin{equation}{section}

\DeclareMathAlphabet\mathsf{OT1}{cmss}{m}{n}

\newcommand{\frb}{{\mathfrak b}}

\newcommand{\frh}{{\mathfrak h}}

\newcommand{\frn}{{\mathfrak n}}

\newcommand{\frp}{{\mathfrak p}}

\newcommand{\fru}{{\mathfrak u}}

\newcommand{\frg}{\mathfrak{g}}

\newcommand{\frso}{\mathfrak{so}}
\newcommand{\frsp}{\mathfrak{sp}}

\def\frsl{\operatorname{SL}}

\newcommand{\gl}{\mathfrak{gl}}

\renewcommand{\frsl}{\mathfrak{sl}}

\newcommand{\ev}{\mathrm{ev}}
\newcommand{\odd}{\mathrm{od}}

\usepackage{enumitem}
\setlist[enumerate,1]{font=\textup,
	leftmargin=5mm,labelsep=0.2em,topsep=1mm,itemsep=1mm,itemindent=1em,listparindent=1em}
\setlist[enumerate,2]{font=\textup, leftmargin=3mm,labelsep=0.2mm,topsep=1mm,itemsep=1mm,itemindent=1em}
\setlist[enumerate,3]{font=\textup, leftmargin=4mm,labelsep=1mm,topsep=1mm,itemsep=1mm,itemindent=1em,listparindent=1em}
\setlist[enumerate,4]{font=\textup, leftmargin=5mm,labelsep=1mm,topsep=1mm,itemsep=1mm,itemindent=1em,listparindent=1em}

\begin{document}
	
\title[]{Gelfand--Kirillov dimensions of highest weight modules for basic classical Lie superalgebras}


\author{Jing Jiang}
\address{
	School of mathematical Sciences, East China Normal University, Shanghai 200241, China} 
\email{jjsd6514@163.com}

\begin{abstract}
	In this paper, we develop a combinatorial algorithm to compute the Gelfand--Kirillov (GK) dimension of simple highest weight modules for type I basic classical Lie superalgebras. Building upon the results for classical Lie algebras via Lusztig’s {\bf a}-function and the Robinson--Schensted (RS) insertion algorithm, we extend these techniques to the super setting, providing explicit formulas for types $\mathfrak{sl}(m|n)$ and $\mathfrak{osp}(2|2n)$. Our results show that for type I basic classical Lie superalgebras, the GK dimension of a simple highest weight module is determined entirely by the even part of the Lie superalgebra.
	
\end{abstract}

\subjclass[2020]{17A70, 17B10, 17B20}

\keywords{Gelfand--Kirillov dimension; basic classical Lie superalgebra; highest weight module}

\maketitle

\tableofcontents

\section{ Introduction}
\subsection{Gelfand--Kirillov dimension}

Let $\mathfrak{g}_0$ be a complex semisimple Lie algebra and $U(\mathfrak{g}_0)$ its enveloping algebra. 
The GK dimension, introduced as a measure of growth of algebras and their modules, has become a fundamental invariant in the representation theory of infinite-dimensional algebras. 
Roughly speaking, the GK dimension provides a quantitative description of the asymptotic behavior of the dimensions of the graded components of a module, and thereby serves as a crucial tool in distinguishing between different classes of representations. 
More precisely, given an infinite-dimensional $U(\mathfrak{g}_0)$-module $M$, $\operatorname{GKdim}(M)$ captures the polynomial rate of growth of $M$, and can be regarded as an analogue of the Krull dimension in commutative algebra (cf.~\cite{AJ1,Vo78}).

The study of GK dimension has a long history. 
A remarkable result of Joseph~\cite{AJ1} established an explicit formula for the GK dimension of regular integral highest weight modules for type~$A$. 
Building on Joseph’s foundational work and exploiting Lusztig’s theory of the {\bf a}-function~\cite{lusztig1985cellsI,lusztig1985A_n,lusztig2003hecke}, Bai--Xie subsequently extended the computation to singular and non-integral cases in type~$A$~\cite{BX1}. For the other classical types, Bai--Xiao--Xie~\cite{BXX} developed an efficient algorithm that systematically computes the GK dimensions of highest weight modules over classical simple Lie algebras. In the representation theory of Lie algebras, GK dimension is used to  measure the size of representations of Lie algebras and Lie groups. Furthermore, it can determine the reducibility of scalar generalized Verma modules; see \cite{BX,BJ24,JJ}.
Closely related to the GK dimension is the notion of the associated variety of a module $M$, whose dimension is known to coincide with $\operatorname{GK} \dim(M)$. 
These two invariants, though defined in distinct ways, are deeply interconnected and have played a significant role in the development of modern representation theory; see~\cite{AJ1,AJ2,AJ3,AJ4,AJ6}.

This work is motivated by extending these ideas to Lie superalgebras.
While the GK dimensions of highest weight modules over classical simple Lie algebras have been extensively studied and explicit formulas are now available~\cite{BX1,BXX}, the situation for Lie superalgebras is considerably more intricate. 
The additional structure introduced by the presence of odd roots, as well as the non-conjugacy of Borel subalgebras in the super setting, poses new challenges in understanding the growth properties of highest weight modules. 
Nevertheless, highest weight theory remains a cornerstone in the study of representations of Lie superalgebras, and the determination of GK dimensions in this context is both natural and important. 

As shown in~\cite{BX1,BXX}, the GK dimension of a highest weight module $L(\lambda)$ over a simple Lie algebra is uniquely determined by the type of root system, together with Lusztig’s {\bf a}-function. 
The value of the {\bf a}-function, in turn, can be computed effectively by means of the RS insertion algorithm, thereby reducing the problem to a combinatorial framework. 
Motivated by these developments, in this article we successfully formulate an algorithm for computing the GK dimensions of $\widetilde{L}(\Lambda)$ in the setting of type I basic classical Lie superalgebras in terms of the GK dimensions of $L_{\frg_0}(\Lambda)$. See Theorem \ref{GK-dim-gl(m|n)} and Theorem \ref{GK-dim-type I}. Unfortunately, for type II Lie superalgebras, an explicit formula for the GK dimensions of $\widetilde{L}(\Lambda)$ is currently unavailable owing to the differences in finite-dimensional classifications.  At the same time, for exceptional Lie superalgebra $G(3)$, the computation of {\bf a}$(w_{\Lambda})$ is more complicated since there is no RS algorithm and it is difficult to find out the corresponding Weyl group element $w_{\Lambda}$. Then we need to use PyCox to compute the value of {\bf a}$(w_{\Lambda})$. The first step is to obtain $w_{\Lambda}$ such that $w_{\Lambda}$ has minimal length and $w_{\Lambda}^{-1}\Lambda$ is anti-dominant by Lemma \ref{find-w-lambda}. Next we need to use PyCox written by Geck \cite{Geck} to determine the value of {\bf a}$(w_{\Lambda})$ if $\Lambda$ is integral. For a non-integral weight $\Lambda$, we need to decompose $\Phi_{[\mu]}$ into some orthogonal subsystems, where $\mu$ is a weight of $G_2$ as in \cite[Lem. 3.5]{BGXW}.  If $\Lambda$ is integral with respect to the subsystem, then we can compute the value of {\bf a}$(w_{\Lambda})$. For more details, see \cite{BGXW}. In particular, the GK dimension of $L_{\frg_0}(\Lambda)$ is uniquely determined by the even part $\frg_0$, hence we need to consider $\Lambda+\rho_0$ rather than $\Lambda+\rho$; see Proposition \ref{semisimple-GK-dim}.

We will use PyCox more frequently to compute the GK dimension of highest weight modules for $G(3)$, since there is no RS algorithm to compute ${\bf a}(w_{\Lambda})$. However, for type $\mathfrak{sl}$ or $\mathfrak{osp}$, we can still use PyCox to calculate the GK dimensions of $L_{\frg_0}(\Lambda)$. The following web page in \cite{BGXW} is convenient to compute the GK dimension of $\frg_0$-module $L(\lambda)$:
\begin{center}
\textcolor{magenta}{http://test.slashblade.top:5000/lie/GKdim},
    \end{center}
which is useful for computing the GK dimension of $L_{\frg_0}(\Lambda)$.


 \subsection{A uniform formula}\label{A(Y)}
Lusztig's {\bf a}-function is defined by the Kazhdan--Lusztig basis of the Hecke algebra $\bm{\mathcal{H}}$, see Section \ref{a_function}. There is a formula connecting Lusztig's {\bf a}-function and the GK dimension
$$\text{GK}\dim L(\lambda)=|\Delta^+|-{\bf a}(w_{\lambda})\text{~for~some~}w_{\lambda}\in W$$
in \cite{BX1}. From now on, let $\frg=\frg_0\oplus\frg_1$ be a basic classical Lie superalgebra, then the Weyl group $W_0$ is isomorphic to the direct product of the Weyl groups corresponding to two simple classical Lie algebras, and  Lusztig's {\bf a}-function on $W_0$ can be determined by the properties of two-sided cells, see Proposition \ref{a_function_property}. 

In this article, the subscript 0 is used so that the notation is consistent with $\frg_0$. Then the calculation of GK dimension for Lie superalgebras can be reduced to generalizing the GK dimension of simple Lie algebras to semisimple Lie algebras since the following proposition holds. Our results can be stated as follows.



\begin{Prop}[See Prop. \ref{semisimple-GK-dim}]
If  $\frg$ is of type I, then  $\text{GK}\dim \widetilde{L}(\lambda|\mu)=\text{GK}\dim L(\lambda)+\text{GK}\dim L(\mu)$.
\end{Prop}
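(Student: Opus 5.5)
Throughout, $\widetilde{L}(\lambda|\mu)$ is the simple highest weight $\frg$-module and $L(\lambda)$, $L(\mu)$ are the simple highest weight modules of the two simple factors of $\frg_0$. The plan has two parts. First, reduce the GK dimension of $\widetilde{L}(\lambda|\mu)$ to that of a $\frg_0$-module. Second, split the $\frg_0$-computation over the two simple factors.

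\textbf{Reduction to $\frg_0$.} For type I we have the $\mathbb{Z}$-grading $\frg=\frg_{-1}\oplus\frg_0\oplus\frg_{1}$. Choose the distinguished Borel subalgebra $\frb=\frb_0\oplus\frg_1$. The Kac module is
$$K(\lambda|\mu)=U(\frg)\otimes_{U(\frg_0\oplus\frg_1)}L_{\frg_0}(\lambda|\mu)\cong \Lambda(\frg_{-1})\otimes L_{\frg_0}(\lambda|\mu)$$
as a $\frg_0$-module. It surjects onto $\widetilde{L}(\lambda|\mu)$, and $L_{\frg_0}(\lambda|\mu)$ is the top graded piece of $\widetilde{L}(\lambda|\mu)$. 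GK dimension over $U(\frg)$ equals GK dimension over $U(\frg_0)$, because $U(\frg)$ is a finite free module over $U(\frg_0)$ (PBW theorem, with $\Lambda(\frg_1)$ finite-dimensional). Hence
$$\operatorname{GKdim} L_{\frg_0}(\lambda|\mu)\le \operatorname{GKdim}\widetilde{L}(\lambda|\mu)\le \operatorname{GKdim}\bigl(\Lambda(\frg_{-1})\otimes L_{\frg_0}(\lambda|\mu)\bigr).$$
The lower bound holds because $L_{\frg_0}(\lambda|\mu)$ is a $\frg_0$-submodule. The upper bound holds because a quotient does not increase GK dimension.

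\textbf{Controlling the tensor product.} Tensoring with the finite-dimensional module $E=\Lambda(\frg_{-1})$ does not increase GK dimension. If $V_0$ generates $M$, then $E\otimes V_0$ generates $E\otimes M$, and $U_n(\frg_0)(E\otimes V_0)\subseteq E\otimes U_n(\frg_0)V_0$. This gives $\operatorname{GKdim}\widetilde{L}(\lambda|\mu)=\operatorname{GKdim}L_{\frg_0}(\lambda|\mu)$.

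\textbf{Splitting over the factors.} Write $\frg_0=\frg_0'\oplus\frg_0''$, with possibly a central torus that acts by scalars and does not matter. For $\frsl(m|n)$ the factors are $\frsl_m$ and $\frsl_n$; for $\mathfrak{osp}(2|2n)$ they are $\mathbb{C}$ and $\frsp_{2n}$. Then $L_{\frg_0}(\lambda|\mu)\cong L(\lambda)\boxtimes L(\mu)$ and $U(\frg_0)=U(\frg_0')\otimes U(\frg_0'')$. Using standard filtrations $U_n\supseteq\sum_{i+j=n}U'_iU''_j$, we get the sandwich
$$U'_{\lfloor n/2\rfloor}v\otimes U''_{\lfloor n/2\rfloor}w\subseteq U_n(v\otimes w)\subseteq U'_nv\otimes U''_nw.$$
Taking dimensions and $\limsup\log/\log n$ gives additivity. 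Here one needs that the growth functions $\dim U'_nv$ are honestly polynomial, so that the $\limsup$ of a product equals the sum of the $\limsup$s.

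\textbf{Main obstacle.} I expect that last point to be the hardest step, since GK dimension is additive on tensor products only when growth is regular. For highest weight modules this can be settled through the associated graded module. The function $\dim U_n v$ agrees for large $n$ with the Hilbert polynomial of $\operatorname{gr}L$, a finitely generated module over $S(\frn^-)$. So $\operatorname{GKdim}$ equals the degree of that polynomial plus one, and degrees add under tensor products. Alternatively, one can argue via associated varieties: $V(L(\lambda)\boxtimes L(\mu))=V(L(\lambda))\times V(L(\mu))$, and dimensions add.
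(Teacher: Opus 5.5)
Your proposal is correct and is essentially the paper's proof: the Kac module $\Lambda(\frg_{-1})\otimes L_{\frg_0}(\lambda|\mu)\twoheadrightarrow\widetilde{L}(\lambda|\mu)$ gives the upper bound and the embedding $L_{\frg_0}(\lambda|\mu)\hookrightarrow\widetilde{L}(\lambda|\mu)$ (Lemma \ref{GK-dim-submodule}) gives the lower bound. The $\lfloor n/2\rfloor$ filtration sandwich on $L(\lambda)\otimes L(\mu)$ then yields additivity, with the Hilbert polynomial supplying the regular growth needed to pass $\varlimsup$ through the product; the paper's first argument glosses over this point and only its second, Hilbert-polynomial argument covers it. One small slip: since $\dim U_n v$ is itself eventually a polynomial, the GK dimension is its degree, not its degree plus one; the ``plus one'' belongs to the polynomial computing $\dim \operatorname{gr}_n L$, and reading your sentence literally together with ``degrees add'' would give the wrong total.
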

	

For a weight $\Lambda\in \frh^*$, associate $\Lambda+\rho_0$ to a set $P(\Lambda)$ of some Young tableaux. If $Y$ is a Young tableau, define $A(Y):=\sum_{i\geq 1}\frac{{\bf q}_i({\bf q}_i-1)}{2}$, where ${\bf q}_i$ is the number of entries in the $i$-th column of $Y$ and define $A(P(\Lambda))=\sum_{Y\in P(\Lambda)}A(Y)$.

\begin{Thm}[See Thm. \ref{GK-dim-gl(m|n)}]
  Let $\frg=\frsl(m|n)$, for any $\Lambda\in\frh^*$, we have
 \begin{align*}
   \text{GK}\dim \widetilde{L}(\lambda|\mu)&=|\Delta_0^+|-A(P(\lambda|\mu)) \\&=\frac{m(m-1)}{2}-A(P(\lambda))+\frac{n(n-1)}{2}-A(P(\mu)).
 \end{align*}  
  
\end{Thm}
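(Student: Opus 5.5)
The plan is to reduce the statement to the already-established Lie algebra case via the preceding Proposition (Prop.~\ref{semisimple-GK-dim}). For $\frg=\frsl(m|n)$ the even part is $\frg_0=\frsl(m)\oplus\frsl(n)\oplus\mathbb{C}$, where the centre $\mathbb{C}$ acts by scalars on a highest weight module and contributes nothing to growth. So by that Proposition,
\[
\text{GK}\dim \widetilde{L}(\lambda|\mu)=\text{GK}\dim L(\lambda)+\text{GK}\dim L(\mu),
\]
where $L(\lambda)$ is the simple $\gl(m)$-module and $L(\mu)$ the simple $\gl(n)$-module. Both weights are shifted by the even Weyl vector $\rho_0=(\rho_{\gl(m)}|\rho_{\gl(n)})$, not by the super $\rho$. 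This is exactly why $P(\Lambda)$ is built from $\Lambda+\rho_0$.

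Next I would invoke the type $A$ result of Bai--Xie \cite{BX1}, in the form $\text{GK}\dim L(\lambda)=|\Delta^+|-{\bf a}(w_\lambda)$. For a possibly non-integral and singular $\lambda$, I would decompose $\lambda+\rho_{\gl(m)}=(\lambda_1,\dots,\lambda_m)$ into classes modulo $\mathbb{Z}$: $\{i,j\}$ is in one class iff $\lambda_i-\lambda_j\in\mathbb{Z}$. The integral root system $\Phi_{[\lambda]}$ is then a product of type $A$ systems, one per class. Applying RS insertion to each subsequence gives a Young tableau, and $P(\lambda)$ is the resulting set of tableaux. By the properties of the ${\bf a}$-function in Proposition~\ref{a_function_property} (additivity over orthogonal factors, and the type $A$ value ${\bf a}(w)=\sum_i \binom{{\bf q}_i}{2}$ read from the column lengths of the RS shape, via Lusztig's theorem), we get ${\bf a}(w_\lambda)=A(P(\lambda))$. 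Hence
\[
\text{GK}\dim L(\lambda)=\frac{m(m-1)}{2}-A(P(\lambda)),
\]
and the same argument gives the formula for $\mu$ with $n$ in place of $m$.

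Summing the two formulas gives the second expression in the Theorem. The first expression follows because $|\Delta_0^+|=\binom m2+\binom n2$ and, by definition, $A(P(\lambda|\mu))=A(P(\lambda))+A(P(\mu))$: the tableau set of $\Lambda+\rho_0$ is the disjoint union of the sets for the two blocks, since $\Lambda+\rho_0$ splits into its $\gl(m)$ and $\gl(n)$ coordinates with no roots of $\frg_0$ connecting them.

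The main obstacle is the middle step: checking that the singular, non-integral case really reduces to column lengths. Concretely, one must confirm that the minimal-length $w_\lambda$ with $w_\lambda^{-1}(\lambda+\rho)$ antidominant within each integral subsystem has ${\bf a}$-value equal to $\sum\binom{{\bf q}_i}{2}$ for the RS tableau. The insertion must be done with the convention that equal entries insert consistently, so that singularity is correctly accounted for. I would take this directly from \cite{BX1}, where it is proved for $\frsl(m)$; the remaining work is only bookkeeping to confirm that the shift by $\rho_0$ matches their normalization.
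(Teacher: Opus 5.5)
Your proposal is correct and takes essentially the paper's route. You reduce via Proposition~\ref{semisimple-GK-dim} to $\text{GK}\dim L(\lambda)+\text{GK}\dim L(\mu)$, then apply the Bai--Xie formula $|\Phi^+|-{\bf a}(w_\lambda)$ with the type $A$ value ${\bf a}=\sum_j\binom{{\bf q}_j}{2}$ (Lemma~\ref{F-A}, Theorem~\ref{a-function-gl(m|n)-osp}), using additivity over integrality classes for the non-integral case; your explicit appeal to \cite{BX1} to match the RS tableau of $\lambda+\rho_0$ (with the tie-breaking convention for equal entries) to the shape of $w_\lambda$ makes precise a step the paper leaves implicit.
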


\begin{Exam}
  Let $\Lambda^{\rho_0}:=\Lambda+\rho_0=(3,3.2,2,-1,2.2,4,1|2,1,2.5,3,4.5,1.5,-1)$, then $P(\lambda)$ and $P(\mu)$ have two Young tableaux:
\[\underbrace{\tiny{{\begin{tikzpicture}[scale=\domscale,baseline=-19pt]
			\hobox{0}{0}{-1}
			\hobox{1}{0}{1}
			\hobox{0}{1}{2}
			\hobox{1}{1}{4}
        \hobox{0}{2}{3}
	\end{tikzpicture}}}\hspace{2em}
  \tiny{\begin{tikzpicture}[scale=\domscale+0.05,baseline=-19pt]
			\hobox{0}{0}{2.2}
			\hobox{0}{1}{3.2}
	\end{tikzpicture}}}_{P(\lambda)}\hspace{5em}
  \underbrace{\tiny{{\begin{tikzpicture}[scale=\domscale,baseline=-19pt]
			\hobox{0}{0}{-1}
			\hobox{1}{0}{3}
			\hobox{0}{1}{1}
        \hobox{0}{2}{2}
	\end{tikzpicture}}}\hspace{2em}
  \tiny{\begin{tikzpicture}[scale=\domscale+0.05,baseline=-19pt]
			\hobox{0}{0}{1.5}
        \hobox{1}{0}{4.5}
			\hobox{0}{1}{2.5}
	\end{tikzpicture}}}_{P(\mu)}
	\]
By the above theorem we have $\text{GK}\dim \widetilde{L}(\lambda|\mu)=(\frac{7\cdot 6}{2}-\frac{3\cdot 2}{2}-\frac{2\cdot 1}{2}-\frac{2\cdot 1}{2})+(\frac{7\cdot 6}{2}-\frac{3\cdot 2}{2}-\frac{2\cdot 1}{2})=33$.
\end{Exam}

For other basic classical Lie superalgebras, we recall the RS insertion algorithm and some notations which will be used in our paper. Some details can be found in \cite{BX1,BXX}.

For a totally ordered set $ \Gamma $, we denote by $ \mathrm{Seq}_n (\Gamma)$ the set of sequences $ x=(x_1,x_2,\dots, x_n) $  of length $ n $ with $ x_i\in\Gamma $. In our paper, we usually take $\Gamma$ to be $\mathbb{Z}$ or a coset of $\mathbb{Z}$ in $\mathbb{C}$.
	Then we have a Young tableau $Y(x)$ obtained by applying the following RS insertion algorithm to $x\in \mathrm{Seq}_n (\Gamma)$. 
 \begin{Def}[RS insertion algorithm]
For an element $ x \in \mathrm{Seq}_n (\Gamma)$, we write $x=(x_1,\dots,x_n)$. We associate to $x $ a Young tableau $ Y(x) $ as follows: Let $ Y_0 $ be an empty Young tableau. Assume that we have constructed the Young tableau $ Y_k $ associated to $ (x_1,\dots,x_k) $, $ 0\leq k<n $. Then $ Y_{k+1} $ is obtained by adding $ x_{k+1} $ to $ Y_k $ as follows. First, we add $ x_{k+1} $ to the first row of $ Y_k $ by replacing the leftmost entry $ x_j $ in the first row which is \textit{strictly} bigger than $ x_{k+1} $. (If there is no such entry $ x_j $, we just add a box with entry $x_{k+1} $ to the right side of the first row, and end this process). Then add this $ x_j $ to the next row as the same way of adding $x_{k+1} $ to the first row. Finally we put $Y(x)=Y_n$.

\end{Def}

Denote the shape of the Young tableau $Y(x)$ by ${\bf p}(x)=\mathrm{sh}(Y(x))=[{\bf p}_1,{\bf p}_2,...,{\bf p}_k]$, where ${\bf p}_i$ is the number of boxes in the $i$-th
row of $Y(x)$. Then ${\bf p}(x)$ is a partition of $n=\sum_{1\leq i\leq k} {\bf p}_i$. Denote by ${\bf q}(x)={\bf p}'(x)=[{\bf q}_1,{\bf q}_2,\cdots,{\bf q}_N]$, the dual partition of ${\bf p}(x)$, if ${\bf q}_i$ is the number of boxes in the $i$-th
column of $Y(x)$.


\begin{Exam}
    Suppose $x=(2,1, 3, -2, -1, 1)$. Usually we write $$x=(2,1, 3, -2, -1, 1')$$ and regard $1<1'$. Then from the RS algorithm, we have
$$
\tiny{\begin{tikzpicture}[scale=\domscale,baseline=-18pt]
		\hobox{0}{0}{2}	
		\end{tikzpicture}}\stackrel{1}{\to}
 \begin{tikzpicture}[scale=\domscale+0.1,baseline=-18pt]
		\hobox{0}{0}{1}
		\hobox{0}{1}{2}
 
		\end{tikzpicture}\stackrel{3}{\to} 
 \begin{tikzpicture}[scale=\domscale+0.1,baseline=-18pt]
		\hobox{0}{0}{1}
		\hobox{1}{0}{3}
      \hobox{0}{1}{2}
 
		\end{tikzpicture}\stackrel{-2}{\to}
 \begin{tikzpicture}[scale=\domscale+0.1,baseline=-18pt]
		\hobox{0}{0}{-2}
		\hobox{1}{0}{3}
		\hobox{0}{1}{1}
      \hobox{0}{2}{2}
		\end{tikzpicture}\stackrel{-1}{\to}
\begin{tikzpicture}[scale=\domscale+0.1,baseline=-18pt]
		\hobox{0}{0}{-2}
		\hobox{1}{0}{-1}
		\hobox{0}{1}{1}
      \hobox{1}{1}{3}
      \hobox{0}{2}{2}
		\end{tikzpicture}\stackrel{1'}{\to} 
  \begin{tikzpicture}[scale=\domscale+0.1,baseline=-18pt]
		\hobox{0}{0}{-2}
		\hobox{1}{0}{-1}
      \hobox{2}{0}{1'}
		\hobox{0}{1}{1}
      \hobox{1}{1}{3}
      \hobox{0}{2}{2}
		\end{tikzpicture}=Y(x).$$
 Thus we have ${\bf p}(x)=[3,2,1]$, which is a partition of $6$.
\end{Exam}

 For a Young diagram $Y$, use $ (k,l) $ to denote the box in the $ k $-th row and the $ l $-th column.
We say the box $ (k,l) $ is {even} (resp. {odd}) if $ k+l $ is even (resp. odd). Let $ {\bf p}_i ^{\ev}$ (resp. $ {\bf p}_i^{\odd} $) be the number of even (resp. odd) boxes in the $ i $-th row of the Young diagram $ Y $.
One can easily check that \begin{equation}\label{eq:ev-od}
	{\bf p}_i^{\ev}=\begin{cases}
		\left\lceil \frac{{\bf p}_i}{2} \right\rceil,&\text{ if } i \text{ is odd},\\
		\left\lfloor \frac{{\bf p}_i}{2} \right\rfloor,&\text{ if } i \text{ is even},
	\end{cases}
	\quad {\bf p}_i^{\odd}=\begin{cases}
		\left\lfloor \frac{{\bf p}_i}{2} \right\rfloor,&\text{ if } i \text{ is odd},\\
		\left\lceil \frac{{\bf p}_i}{2} \right\rceil,&\text{ if } i \text{ is even}.
	\end{cases}
\end{equation}
Here for $ a\in \mathbb{R} $, $ \lfloor a \rfloor $ is the largest integer $ n $ such that $ n\leq a $, and $ \lceil a \rceil$ is the smallest integer $n$ such that $ n\geq a $. For convenience, we set
\begin{equation*}
	{\bf p}^{\ev}=[{\bf p}_1^{\ev},{\bf p}_2^{\ev},\cdots]\quad\mbox{and}\quad {\bf p}^{\odd}=[{\bf p}_1^{\odd},{\bf p}_2^{\odd},\cdots].
\end{equation*}

\begin{Exam}
	Let ${\bf p}=[6,5,4,3,2,1]$ be the shape of the Young diagram $Y$. Then odd and even boxes in $Y$ are marked as follows.
	\[
	\tiny\begin{tikzpicture}[scale=0.4,baseline=-40pt]
			\hobox{0}{0}{E}
			\hobox{0}{1}{O}
			\hobox{0}{2}{E}
			\hobox{0}{3}{O} 
			\hobox{0}{4}{E}	
			\hobox{0}{5}{O}		
			\hobox{1}{0}{O}
			\hobox{1}{1}{E}
			\hobox{1}{2}{O}
			\hobox{1}{3}{E} 
			\hobox{1}{4}{O}
			\hobox{2}{0}{E}
			\hobox{2}{1}{O}
			\hobox{2}{2}{E}
			\hobox{2}{3}{O} 
			\hobox{3}{0}{O}
			\hobox{3}{1}{E}
			\hobox{3}{2}{O}
			\hobox{4}{0}{E}
			\hobox{4}{1}{O}
			\hobox{5}{0}{O}		
	\end{tikzpicture}
	\] 
	Then ${\bf p}^{\ev}=[3,2,2,1,1]$ and ${\bf p}^{\odd}=[3,3,2,2,1,1]$.
	
\end{Exam}

For convenience, if $ x=(x_1,x_2,\cdots,x_n)\in \mathrm{Seq}_n (\Gamma) $, set
\begin{equation*}
	\begin{aligned}
		{x}^-=&(x_1,x_2,\cdots,x_{n-1}, x_n,-x_n,-x_{n-1},\cdots,-x_2,-x_1),\\
		{}^-{x}=&(-x_n,-x_{n-1},\cdots, -x_2,-x_1,x_1,x_2,\cdots, x_{n-1}, x_n).
	\end{aligned}
\end{equation*}

Let $\mathfrak{S}_m$ be the symmetric group in $m$ letters. Then $\mathfrak{S}_m$ is the Weyl group of $\gl(m)$ via $$w(\epsilon_i)=\epsilon_{w(i)}\text{~for~}1\leq i\leq m\text{~and~}w\in\mathfrak{S}_m.$$ The simple reflections of $\mathfrak{S}_m$ are $(k,k+1)$ for $1\leq k\leq m-1$. For $w\in \mathfrak{S}_m$, we use the notation
$$
\left(\begin{array}{cccc}
   1  & 2 & \cdots & m \\
   w(1)  & w(2) & \cdots & w(m)
\end{array}\right)
$$
to denote $w$. For simplicity, we denote $w$ by the one-line notation $(w(1), w(2), \cdots, w(m))$.

\begin{Thm}[See Thm. \ref{a-function-gl(m|n)-osp}]
For $w=(u,v)\in W_0$, we have
\[
{\bf a}(w) = 
\begin{cases} 
\sum_{i\geq 1}(i-1){\bf p}(u)_i+\sum_{j\geq 1}(j-1){\bf p}(v)_j & \text{~if~} \frg=\frsl(m|n), \\
\sum_{i\geq 1}(i-1){\bf p}({}^-u)_i^{\odd}+\sum_{j\geq 1}(j-1){\bf p}({}^-v)_j^{\odd} & \text{~if~} \frg=\mathfrak{osp}(2m+1|2n), \\
\sum_{i\geq 1}(i-1){\bf p}({}^-u)_i^{\ev}+\sum_{j\geq 1}(j-1){\bf p}({}^-v)_j^{\odd} & \text{~if~} \frg=\mathfrak{osp}(2m|2n)\text{~for~}m\geq2, \\
\sum_{j\geq 1}(j-1){\bf p}({}^-v)_j^{\odd} & \text{~if~} \frg=\mathfrak{osp}(2|2n).
\end{cases}
\]
\end{Thm}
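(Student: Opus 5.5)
The plan is to reduce the statement to the simple factors of $\frg_0$ and then quote the known formulas for classical Weyl groups. First I will identify $W_0$ in each case:
\begin{itemize}
\item for $\frsl(m|n)$, $\frg_0\cong \frsl(m)\oplus\frsl(n)\oplus\mathbb{C}$ and $W_0=\mathfrak{S}_m\times\mathfrak{S}_n$;
\item for $\mathfrak{osp}(2m+1|2n)$, $W_0=W(B_m)\times W(C_n)$;
\item for $\mathfrak{osp}(2m|2n)$ with $m\geq 2$, $W_0=W(D_m)\times W(C_n)$;
\item for $\mathfrak{osp}(2|2n)$, $\frg_0\cong\mathfrak{so}(2)\oplus\frsp(2n)$, so the first factor is trivial and $W_0=W(C_n)$.
\end{itemize}
An element $w\in W_0$ is written $w=(u,v)$ accordingly.

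The key structural step is additivity of Lusztig's {\bf a}-function on a direct product of Coxeter groups: ${\bf a}(u,v)={\bf a}(u)+{\bf a}(v)$. I would derive this from Proposition \ref{a_function_property}, or directly from the definition via the Kazhdan--Lusztig basis. The Hecke algebra of $W'\times W''$ is $\bm{\mathcal H}'\otimes\bm{\mathcal H}''$. The Kazhdan--Lusztig basis factors as $C_{(u,v)}=C_u\otimes C_v$, so the structure constants factor as $h_{(x,y),(x',y'),(z,z')}=h_{x,x',z}h_{y,y',z'}$. Taking the maximal $v$-degree then gives the sum of the two maximal degrees. Equivalently, the two-sided cells of $W_0$ are products of two-sided cells, and ${\bf a}$ is computed via the longest element of a parabolic subgroup, whose length is additive. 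I expect this step to be routine.

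It then remains to insert the known formulas for each classical factor, as developed in \cite{BX1,BXX}.
\begin{itemize}
\item In type $A_{m-1}$, for $u\in\mathfrak{S}_m$ viewed as the sequence $(u(1),\dots,u(m))$, one has ${\bf a}(u)=\sum_i (i-1){\bf p}(u)_i$. This is $\sum_j \binom{{\bf q}_j}{2}$ in terms of the dual partition, and it follows from Lusztig's computation for $A_n$ together with the RS correspondence (Greene's theorem).
\item For types $B_m/C_n$, \cite{BXX} gives ${\bf a}(w)=\sum_i(i-1){\bf p}({}^-w)^{\odd}_i$, where $w$ is regarded as a signed permutation sequence.
\item For type $D_m$, the same reference gives the formula with ${\bf p}^{\ev}$.
\end{itemize}
I will check that the conventions match. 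The sequence attached to $w$ in \cite{BXX}, namely the one-line notation of the signed permutation symmetrized by ${}^-(\cdot)$, must agree with the one used here. The $C_n$ factor must use the odd-box count in both $\mathfrak{osp}$ cases. Type $B$ and type $C$ have the same Weyl group, and ${\bf a}$ depends only on the Coxeter group, so the $B_m$ factor also uses ${\bf p}^{\odd}$.

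The main obstacle is bookkeeping rather than substance: making sure the parity convention (even versus odd boxes) and the ordering convention in ${}^-u$ match the cited theorems of \cite{BXX}, especially in type $D$. There, ${\bf a}$ on $W(D_m)\subset W(B_m)$ differs from ${\bf a}$ computed in $W(B_m)$, so the ${\bf p}^{\ev}$ version must be quoted for $D_m$ specifically. The $\mathfrak{osp}(2|2n)$ case is the degenerate instance in which the $\mathfrak{so}(2)$ factor contributes ${\bf a}=0$, which is why only the $v$-term survives. The exclusion $m\geq2$ for $D_m$ handles the fact that $D_1$ is trivial.
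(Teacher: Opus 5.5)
Your proposal is correct and follows the paper's own proof. Like the paper, you decompose $W_0$ into its classical factors (with trivial first factor for $\mathfrak{osp}(2|2n)$), use additivity of ${\bf a}$ on direct products (Proposition \ref{a_function_property}\eqref{a-7}), and quote the type $A$ formula together with Lemma \ref{F-B-D} from \cite{BXX} for each factor. Your Hecke-algebra argument for additivity is sound, but drop the ``equivalently, via longest elements of parabolic subgroups'' aside: in types $B$, $C$, $D$ not every two-sided cell contains some $w_J$ (e.g.\ the cell of $W(D_4)$ with ${\bf a}$-value $7$, since no parabolic subgroup of $W(D_4)$ has exactly $7$ reflections).
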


\begin{Exam}\label{a-w}
    Let $\frg=\mathfrak{osp}(11|8)$ and $w=(\underbrace{3,4,-1,5,2}_u,\underbrace{3,-1, -4, 2}_v)$. We have
    \[
\scalebox{0.85}{$Y({}^-u)$}={\tiny\begin{tikzpicture}[scale=\domscale+0.1,baseline=-19pt]
			\hobox{0}{0}{-5}
			\hobox{1}{0}{-4}
                \hobox{2}{0}{-3}
                \hobox{3}{0}{-1}
                \hobox{4}{0}{2}
                \hobox{5}{0}{5}
			\hobox{0}{1}{-2}
			\hobox{1}{1}{1}
                \hobox{2}{1}{3}
                \hobox{3}{1}{4}
	\end{tikzpicture}}\hspace{5em}
 \scalebox{0.85}{$Y({}^-v)$}={\tiny\begin{tikzpicture}[scale=\domscale+0.1,baseline=-19pt]
			\hobox{0}{0}{-4}
			\hobox{1}{0}{-1}
            \hobox{2}{0}{2}
			\hobox{0}{1}{-3}
            \hobox{1}{1}{1}
            \hobox{2}{1}{3}
            \hobox{0}{2}{-2}
            \hobox{0}{3}{4}
	\end{tikzpicture}}.
	\]
Hence ${\bf p}({}^-u)^{\odd}=[3,2]$, ${\bf p}({}^-v)^{\odd}=[1,2,0,1]$ and ${\bf a}(w)={\bf a}(u)+{\bf a}(v)=2+5=7$.
    
\end{Exam}

Based on the above result for ${\bf a}$-functions, we can deduce an efficient algorithm for the GK dimensions of simple highest weight modules for basic classical Lie superalgebras.

\begin{Thm}[See Thm. \ref{GK-dim-type I}]
    Let $\Lambda=(\lambda_1,\lambda_2,\cdots,\lambda_m|\mu_1,\mu_2\cdots,\mu_n)\in\frh^*$ be integral. Then
$$
\scalebox{0.8}{$
\text{GK}\dim \widetilde{L}(\lambda|\mu) = 
\begin{cases} 
\frac{m(m-1)}{2}+\frac{n(n-1)}{2}-( \sum_{i\geq 1}(i-1){\bf p}(\lambda^{\rho_0})_i+\sum_{j\geq 1}(j-1){\bf p}(\mu^{\rho_0})_j) & \text{~if~} \frg=\frsl(m|n), \\
n^2-\sum_{i\geq 1}(i-1){\bf p}((\mu^{\rho_0})^-)_i^{\odd} & \text{~if~} \frg=\mathfrak{osp}(2|2n).
\end{cases}
$}
$$
\end{Thm}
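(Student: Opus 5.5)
The plan is to reduce everything to the even part $\frg_0$ and then feed in the ${\bf a}$-function formulas. First I will invoke Proposition \ref{semisimple-GK-dim}: for $\frg$ of type I, $\text{GK}\dim \widetilde{L}(\lambda|\mu)=\text{GK}\dim L(\lambda)+\text{GK}\dim L(\mu)$. Here the two summands are the simple highest weight modules of the two factors of $\frg_0$, taken with respect to $\rho_0$, not $\rho$. For $\frsl(m|n)$ the factors are $\gl(m)$ and $\gl(n)$ (modulo centre). For $\mathfrak{osp}(2|2n)$, $\frg_0=\gl(1)\oplus\frsp(2n)$; the $\gl(1)$ part is abelian, so it contributes $0$ to the GK dimension.

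Second, on each simple factor I will apply the formula of \cite{BX1,BXX}, $\text{GK}\dim L(\lambda)=|\Delta^+|-{\bf a}(w_\lambda)$. Here $w_\lambda$ is the minimal-length element such that $w_\lambda^{-1}(\lambda+\rho_0)$ is antidominant, obtained as in Lemma \ref{find-w-lambda}. Since $\Lambda$ is integral, each $\lambda+\rho_0$ lies in a single integral coset, so no decomposition of the integral root system is needed. Summing over the factors gives
\[
\text{GK}\dim\widetilde L(\lambda|\mu)=|\Delta_0^+|-{\bf a}(w_\Lambda),\qquad w_\Lambda=(w_\lambda,w_\mu)\in W_0 .
\]
The additivity of ${\bf a}$ on the product $W_0$ comes from Proposition \ref{a_function_property}. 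Finally, $|\Delta_0^+|$ equals $\frac{m(m-1)}2+\frac{n(n-1)}2$ for $\frsl(m|n)$ and $n^2$ for $\mathfrak{osp}(2|2n)$.

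Third, I will evaluate ${\bf a}(w_\Lambda)$ using Theorem \ref{a-function-gl(m|n)-osp}.

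\emph{Type $\frsl(m|n)$.} The one-line notation of $w_\lambda$ has the same relative order as the sequence $\lambda^{\rho_0}$, up to the reversal convention coming from antidominance versus dominance. By the RS algorithm, the shapes satisfy ${\bf p}(w_\lambda)={\bf p}(\lambda^{\rho_0})$, possibly after transposition. I will match conventions carefully, using the known type $A$ statement of \cite{BX1}, which says ${\bf a}(w_\lambda)=\sum_i(i-1){\bf p}(\lambda^{\rho_0})_i$. Ties are handled by the primed-entry convention, in agreement with minimality of length.

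\emph{Type $\mathfrak{osp}(2|2n)$.} Only the type $C_n$ factor contributes. Here I use the result of \cite{BXX} that ${\bf a}(w_\mu)=\sum_j(j-1){\bf p}((\mu^{\rho_0})^-)^{\odd}_j$. This relies on the fact that the signed permutation $w_\mu$ and the sequence $(\mu^{\rho_0})^-$ produce the same tableau shape as ${}^-v$ does in Theorem \ref{a-function-gl(m|n)-osp}. The reason is that $x^-$ and ${}^-x$ are reverses of one another after negation, and the RS shape is invariant under the transformation "reverse, negate".

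I expect the main obstacle to be this last bookkeeping step. It requires checking that the sequence attached to $\Lambda+\rho_0$, as opposed to the permutation $w_\Lambda$, gives exactly the same shape, including the choice of $x^-$ versus ${}^-x$ and the treatment of equal or zero entries in type $C$. I would first verify it on the simple reflections and small examples such as Example \ref{a-w}. Then I would cite the corresponding lemma of \cite{BXX} rather than reprove the RS invariance.
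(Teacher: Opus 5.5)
Your plan matches the paper's proof. Proposition \ref{semisimple-GK-dim} reduces the computation to $\frg_0$. Proposition \ref{GK-dim-L_g0} then combines $|\Delta_0^+|-{\bf a}(w_\Lambda)$, Theorem \ref{a-function-gl(m|n)-osp}, and the BXX result (Lemma \ref{lambda-w}) that ${\bf p}(\lambda^-)={\bf p}({}^-w)$ for $\lambda=w\delta$ with $w\in W^I$.

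One caution on your heuristic: $x^-$ and ${}^-x$ are not reverse-negations of each other. Each is fixed by reverse-negate, and their shapes generally differ; for example, $x=(1,2)$ gives shapes $[2,2]$ and $[4]$. So your type $C$ step rests entirely on the cited lemma relating the weight to the Weyl group element, exactly as in the paper.
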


\begin{Exam}
    Let $\frg=\mathfrak{osp}(2|8)$ and $\Lambda^{\rho_0}:=\Lambda+\rho_0=(3~|-2,1,3,-1)$. Then
 \[
 \scalebox{0.85}{$Y((\mu^{\rho_0})^-)$}={\tiny\begin{tikzpicture}[scale=\domscale+0.1,baseline=-19pt]
			\hobox{0}{0}{-3}
			\hobox{1}{0}{-1}
                \hobox{2}{0}{-1}
                \hobox{3}{0}{2}
			\hobox{0}{1}{-2}
                \hobox{1}{1}{1}
                \hobox{0}{2}{1}
                \hobox{1}{2}{3}
	\end{tikzpicture}}.
	\]
Therefore ${\bf p}((\mu^{\rho_0})^-)^{\odd}=[2,1,1]$, we have
$$\text{GK}\dim \widetilde{L}(\lambda|\mu)=4^2-(0\cdot2+1\cdot1+2\cdot1)=13.$$

\end{Exam}


\subsection{Organization}
This paper is organized as follows.  To make our discussion self-contained and to prepare for the computation of the GK dimensions in the subsequent sections, we first recall in Section \ref{pre}, the necessary background on basic classical Lie superalgebras, Kac modules, and Lusztig’s {\bf a}-function.  In Section \ref{section-to-GK-on-HWM}, the relationships between $\text{GK}\dim \widetilde{L}(\lambda|\mu)$ and $\text{GK}\dim L_{\frg_0}(\lambda|\mu)$ are given in the type I case, hence we can describe the formulas of the GK dimensions of $\widetilde{L}(\lambda|\mu)$. In Section \ref{section-a-fun} we  give the formulas for Lusztig’s {\bf a}-functions in special linear Lie superalgebra, orthosymplectic Lie superalgebras and exceptional Lie superalgebra $G(3)$. In Section \ref{section-GK-formula} we will give the GK dimensions of highest weight modules $L_{\frg_0}(\lambda|\mu)$ for basic classical Lie superalgebras excluding exceptional types $F(4)$ and $D(2,1;\alpha)$. In Section \ref{section-GK-basic} we will give the GK dimensions of highest weight modules $\widetilde{L}(\lambda|\mu)$ for basic classical Lie superalgebras. 


\section*{Acknowledgments}
The author thanks Chih-Whi Chen for helpful discussions on Kac module and GK dimension, Zhanqiang Bai for discussions on GK dimension, and Li Luo for pointing out the errors in this paper. The author is grateful for the financial support provided by the Program of China Scholarship Council (Grant No. 202506140041).

\section{Preliminaries}\label{pre}
In this section, we will give some brief preliminaries on basic classical Lie superalgebras, GK dimension, Lusztig's {\bf a}-function, induced modules and PyCox. See \cite{Mu12,BX1, KL, lusztig1985cellsI,FSS2000,Geck} for more details. First we list several definitions and notations used in this paper.
\subsection{Definitions and Notations}
\begin{itemize}
    \item $\dot{\frg}$ : finite-dimensional Lie algebra, which can be simple or semisimple.

    \item $\frg=\frg_0\oplus\frg_1$ : basic classical Lie superalgebras excluding exceptional Lie superalgebras $F(4)$ and $D(2,1;\alpha)$.

    \item $\frg=\frh\oplus \sum_{\alpha\in \Delta}\frg^{\alpha}$ : root space decomposition with Cartan subalgebra $\frh$, root system $\Delta$ and root space $\frg^{\alpha}$.

    \item $\frg=\frn^-\oplus\frh\oplus\frn^+$ : triangular decomposition of $\frg$ with standard Borel subalgebra $\frb=\frh\oplus\frn^+$.

    \item $\Delta=\Delta_0\bigcup\Delta_1$ : disjoint union of even roots $\Delta_0$ and odd roots $\Delta_1$ of $\frg$.

    \item $\Phi=\Phi^-\bigcup\Phi^+$ : disjoint union of negative roots $\Phi^-$ and positive roots $\Phi^+$ of $\dot{\frg}$.

    \item $\Phi_X^+$ : positive root system of Lie algebra of type  $X\in\{B,C,D,G\}$.

    \item $\rho$ : the half graded sum of positive roots $\rho_0-\rho_1$.

    \item $\Lambda^{\rho_0}$ : $\Lambda+\rho_0$.

    \item $\widetilde{L}(\Lambda)$ : simple highest weight $\frg$-module with highest weight $\Lambda=(\lambda|\mu)$.

    \item $L_{\frg_0}(\Lambda)$ : simple highest weight $\frg_0$-module.

    \item $L^0(\Lambda)$ : simple highest weight $\frg(0)$-module, where $\frg=\oplus_{i\in\mathbb{Z}}\frg(i)$. 

    \item $L(\lambda)$ (resp. $L(\mu)$) : simple highest weight $\frg_0'$ (resp. $\frg_0''$)-module, where $\frg_0=\frg_0'\oplus\frg_0''$.

    \item $L_Y(\Lambda)$ : simple highest weight $Y$-module, where $Y$ is of Lie type $A$, $B$ or $D$.

    \item $W_0$ (resp. $W_X$) : the Weyl group of $\frg$ (resp. Lie algebra of type $X\in\{B,C,D,G\}$).

    \item $\Box_0$ (resp. $\Box_1$) : the even (resp. odd) part of $\Box$, where $\Box$ is a notation.

    \item $\Delta_{[\mu]}$ : $\{\alpha\in\Delta_0|(\mu,\alpha^{\vee})\in \mathbb{Z}\}$, defined in Section \ref{section-to-GK-on-HWM}.

    \item $\Pi_{[\mu]}$ : the simple system of $\Delta_{[\mu]}$.

    \item $W_{[\mu]}: \{w\in W~|~w\mu-\mu\in\mathbb{Z}\Delta_0\}$, subgroup of $W$ defined in Section \ref{section-to-GK-on-HWM}.

    \item $W_{[\mu]}^I:\{w\in W_{[\mu]} ~|~\ell(ws_{\alpha})=\ell(w)+1\text{~for~all~}\alpha\in I \}$, where $I:=\{\alpha\in\Pi_{[\mu]}~|~(\mu,\alpha^{\vee})=0\}$ and $\ell$ is the length function on $W_{[\mu]}$.

    \item $\Delta_z$ : the orthogonal subsystem defined in Section \ref{The-general-case}.

    \item $T^1_z:=\{i\leq m~|~\Lambda_i\in z+\mathbb{Z}\}$ for $z\in\mathbb{C}$ defined in Section \ref{The-general-case}.
    
    \item $T^2_z:=\{m+1\leq i\leq m+n~|~\Lambda_i\in z+\mathbb{Z}\}$ for $z\in\mathbb{C}$ defined in Section \ref{The-general-case}.

    \item $F_Y(p\|q\|s)$ : $F_Y(p)+F_Y(q)+F_Y(s)$, where the function $F_Y$ is defined in Definition \ref{F_X} with $Y\in\{A,B,D\}$.

\end{itemize}

\subsection{Basic classical Lie superalgebras}
From now on, assume that $\frg$ is a basic classical Lie superalgebra excluding exceptional $F(4)$ and $D(2,1;\alpha)$, i.e., $\frg$ is one of: 
\begin{align}\label{classical-type}
    A(m-1,n-1)\text{~with~}m\neq n, \quad B(m,n),\quad C(n+1), \quad D(m,n), \quad G(3).
\end{align}

Let $\Delta$ be a root system for a basic classical Lie superalgebra $\frg$ with a given Cartan subalgebra $\frh$. Given a positive system $\Delta^+$, denote the simple system by $\Pi$. Similarly we denote by $\Delta^-$ the corresponding set of negative roots.  Set $\Delta^+_i=\Delta^+\cap \Delta_i$ and $\Delta^-_i=\Delta^-\cap \Delta_i$ for $i\in \mathbb{Z}_2=\{0,1\}$. Then we have $$\Delta^+=\Delta^+_0\cup\Delta^+_1.$$

Define $$\frn^+=\bigoplus_{\alpha\in\Delta^+}\frg^{\alpha},\qquad\frn^-=\bigoplus_{\alpha\in\Delta^-}\frg^{\alpha}$$
for root space $\frg^{\alpha}=\{g\in\frg~|~[h,g]=\alpha(h)g,~\forall h\in\frh\}$. Then we obtain a triangular decomposition $$\frg=\frn^-\oplus\frh\oplus\frn^+$$ of $\frg$ and denote by $\frb=\frh\oplus\frn^+$ the {\it standard Borel subalgebra} of $\frg$ (corresponding to $\Delta^+$). Set $\frb=\frb_0\oplus\frb_1$, where $\frb_i=\frb\cap\frg_i$ for $i\in\mathbb{Z}_2$.

Given positive system $\Delta^+$, the half graded sum of positive roots $\rho$ is defined by
$$\rho=\rho_0-\rho_1=\frac{1}{2}\sum_{\alpha\in\Delta_0^+}\alpha-\frac{1}{2}\sum_{\beta\in\Delta_1^+}\beta.$$
Then we have the following table.

	\begin{tabular}{|c|c|}	
		\hline
		$\frg$ & $\rho_0 \:(\text{resp. }\rho)$  \\ 
		\hline 

		$\mathfrak{sl}(m|n)$ &   \makecell[l]{\rule{0pt}{15pt}\vspace{1em}$\rho_0=\frac{1}{2}(m-1,\cdots,1-m|n-1,\cdots,1-n)$ \\ \vspace{0.5em}$\rho=\frac{1}{2}(m-n-1,\cdots,-m-n+1|m+n-1,\cdots,m-n+1)$} \\ \hline

     $\mathfrak{osp}(2|2n)$ & \makecell[l]{\rule{0pt}{15pt}\vspace{1em}$\rho_0=(0|n,n-1,\cdots,1)$ \\  \vspace{0.5em}$\rho=(-n|n,n-1,\cdots,1)$} \\ \hline

		$\mathfrak{osp}(2m+1|2n)$ &   \makecell[l]{\rule{0pt}{15pt}\vspace{1em}$\rho_0=(m-\frac{1}{2},\cdots,\frac{1}{2}|n,\cdots,1)$ \\  \vspace{0.5em}$\rho=(m-\frac{1}{2},\cdots,\frac{1}{2}|n-m-\frac{1}{2},\cdots,-m+\frac{1}{2})$} \\ \hline

        $\mathfrak{osp}(2m|2n)$ &   \makecell[l]{\rule{0pt}{15pt}\vspace{1em}$\rho_0=(m-1,\cdots,0|n,\cdots,1)$ \\  \vspace{0.5em}$\rho=(m-1,\cdots,0|n-m,\cdots,-m+1)$}\\ \hline

         $G(3)$ &   \makecell[l]{\rule{0pt}{15pt}\vspace{1em}$\rho_0=(1|\frac{1}{2},\frac{3}{2},-\frac{3}{2})=(1|2,3)$ \\  \vspace{0.5em}$\rho=(-\frac{5}{2}|\frac{1}{2},\frac{3}{2},-\frac{3}{2})=(-\frac{5}{2}|2,3)$}\\ \hline
\end{tabular}

\subsubsection{Special linear Lie superalgebra}
Let $\mathbb{C}^{m|n}$ denote the complex superspace of dimension $m|n$ with the standard basis $e_{i}, 1\leq i\leq m+n$, such that $\{e_1, \dots e_m\}$ is a basis for the even subspace $\mathbb{C}^{m|0}$ and $\{e_{m+1}, \dots e_{m+n}\}$ is a basis for the odd subspace $\mathbb{C}^{0|n}$. As a complex superspace, the {\it general linear Lie superalgebra} $\mathfrak{gl}(m|n)$ consists of $(m+n)\times (m+n)$-block matrices with respect to the standard basis, i.e., 
\begin{align}\label{gl-mn}
     \mathfrak{gl}(m|n):= \Big\{ \begin{pmatrix}
 A & B\\ 
 C & D
\end{pmatrix} \Bigg| A\in \mathfrak{M}_{m,m}, B\in \mathfrak{M}_{m, n}, C\in \mathfrak{M}_{n,m}, D\in \mathfrak{M}_{n, n} \Big\}, 
\end{align}

 where $\mathfrak{M}_{p,q}$ denotes the complex space of $(p\times q)$-matrices for any positive integers $p,q$. The even subalgebra $\mathfrak{gl}(m|n)_0\cong\mathfrak{gl}(m) \oplus \mathfrak{gl}(n)$ consists of matrices with $B$ and $C$ being zero blocks, while the odd subspace $\mathfrak{gl}(m|n)_1$ consists of matrices with $A$ and $D$ being zero blocks. The Lie bracket is defined by $[X,Y]= XY-(-1)^{|X||Y|}YX$ for any homogeneous $X,Y\in \mathfrak{gl}(m|n)$.

 For each element $g\in\mathfrak{gl}(m|n)$, define the {\it supertrace} as
 $$\text{str}(g):=\text{tr}(A)-\text{tr}(D),$$
where $\text{tr}(M)$ denotes the trace of square matrix $M$. Set
$$\frsl(m|n):=\{g\in\mathfrak{gl}(m|n)~|~\text{str}(g)=0\}$$
to be a subalgebra of $\mathfrak{gl}(m|n)$ and call it {\it special linear Lie superalgebra}.


Let $\{\epsilon_i \mid 1 \leq i \leq m+n\}$ be the dual basis of $\mathfrak{h}^*$ such that $\epsilon_i(E_{jj}) = \delta_{ij}$ for $1 \leq i,j \leq m+n$.  The set of positive roots relative to $\mathfrak{b}$ is given by $\Delta^+ = \{\epsilon_i - \epsilon_j \mid 1 \leq i < j \leq m+n\}$. For convenience, we write $\delta_{\mu}= \epsilon_{m+\mu}$ for $1\leq \mu \leq n$. The sets of even and odd positive roots are denoted by $\Delta^+_0$ and $\Delta^+_1$, respectively, where
\begin{align*}
 \Delta^+_0 &= \{\epsilon_i - \epsilon_j \mid 1 \leq i < j \leq m\} \cup \{\delta_{\mu} - \delta_{\nu} \mid 1 \leq \mu < \nu \leq n\},\\
 \Delta^+_1 &= \{\epsilon_i - \delta_{\mu} \mid 1 \leq i \leq m, 1\leq \mu\leq n\}.
\end{align*}
 Let $(-,-): \mathfrak{h}^*\times \mathfrak{h}^*\rightarrow \mathbb{C}$ be the symmetric bilinear form defined by
\[(\epsilon_i, \epsilon_j)=\delta_{ij}, \quad (\delta_{\mu}, \delta_{\nu})=-\delta_{\mu,\nu}, \quad (\epsilon_i, \delta_{\mu})=0, \]
for $1\leq i,j \leq m$ and $1\leq \mu,\nu\leq n$. We have 
\[ \rho=\rho_0-\rho_1=\frac{1}{2}\sum_{i=1}^m (m-n-2i+1)\epsilon_i+ \frac{1}{2}\sum_{\mu=1}^n(m+n-2\mu+1)\delta_{\mu}.  \]
It is sometimes more convenient to use
$$\partial=\sum_{i=1}^m(m-i)\epsilon_i+\sum_{j=1}^n(1-j)\delta_j$$
since the coefficients of $\partial$ are integers. The difference $\rho-\partial$ is orthogonal to all roots. The difference $\rho-\rho_0$ is orthogonal to all even roots.

The Weyl group $W$ of $\frsl(m|n)$, which is by definition the Weyl group $W_0$ of $\frg_0=\frsl(m)\oplus\frsl(n)\oplus u(1)$, is isomorphic to $\mathfrak{S}_m\times\mathfrak{S}_n$.

\subsubsection{Lie superalgebra $\mathfrak{osp}(2|2n)$}
The even (odd) positive system is
\begin{align*}
  \Delta_0^+&= \{\delta_k\pm\delta_l, 2\delta_k \mid 1\leq k <l\leq n\},\\
\Delta_1^+&=\{\epsilon_1\pm\delta_k\mid 1\leq k\leq n\}.  
\end{align*}

The Weyl group $W$ of $\mathfrak{osp}(2|2n)$, which is by definition the Weyl group $W_0$ of $\frg_0=\frso(2)\oplus\frsp(2n)$, is isomorphic to $\{e\}\times W_C$.

\subsubsection{Lie superalgebra $\mathfrak{osp}(k|2n)$ for $k\geq3$}
The Lie superalgebra considered here is of type $B(m,n)$ (resp. $D(m,n)$) if $k$ is odd (resp. even). The even (odd) positive system is
\begin{equation*}
\Delta_{0}^{+} =
\begin{cases}
\{\epsilon_i \pm \epsilon_j,\, \delta_r \pm \delta_t \mid 1 \le i < j \le m,\, 1 \le r \le t \le n\}\backslash \{0\} , & \text{if } k = 2m; \\[6pt]
\{\epsilon_i \pm \epsilon_j,\, \epsilon_l,\, \delta_r \pm \delta_t \mid 1 \le i < j \le m,\, 1 \le l \le m,\, 1 \le r \le t \le n\}\backslash \{0\}, & \text{if } k = 2m+1,
\end{cases}
\end{equation*}
and
\begin{equation*}
\Delta_{1}^{+} =
\begin{cases}
\{\delta_r \pm \epsilon_i \mid 1 \le i \le m,\, 1 \le r \le n\}, & \text{if } k = 2m; \\[6pt]
\{\delta_r,\, \delta_r \pm \epsilon_i \mid 1 \le i \le m,\, 1 \le r \le n\}, & \text{if } k = 2m+1.
\end{cases}
\end{equation*}

Set
\begin{equation*}
\Delta_{0} =
\begin{cases}
\{\epsilon_i \pm \epsilon_j,\, \delta_r - \delta_t \mid 1 \le i < j \le m,\, 1 \le r < t \le n\}, & \text{if } k = 2m; \\[6pt]
\{\epsilon_i \pm \epsilon_j,\, \epsilon_l,\, \delta_r - \delta_t \mid 1 \le i < j \le m,\, 1 \le l \le m,\, 1 \le r < t \le n\}, & \text{if } k = 2m+1,
\end{cases}
\end{equation*}
and
\begin{equation*}
\Delta_{2} = \{\delta_r + \delta_t \mid 1 \le r \le t \le n\}.
\end{equation*}

\subsection{Lie superalgebra $G(3)$}\label{notation-G3}

The Lie superalgebra $\frg = G(3)$ is a 31-dimensional exceptional Lie superalgebra of
defect 1. We have $\frg_0 = G_2\oplus A_1$, where $G_2$ is the exceptional Lie algebra, and an
irreducible $\frg_0$-module $\frg_1$ that is isomorphic to ${\bf \underline{7}}\otimes {\bf \underline{2}}$, where ${\bf \underline{7}}$ is the 7- dimensional representation of $G_2$ and ${\bf \underline{2}}$ is the 2-dimensional representation of $\mathfrak{sl}_2$. $\frg_0$ has dimension 17 and rank 3, $\frg_1$ has dimension 14.

 To describe the roots for $G_2$ and $\frg$, we adopt notations in \cite{ML,CW22}.  We introduce $\epsilon_1,\epsilon_2,\epsilon_3$ which satisfy the linear relation
 $$\epsilon_1+\epsilon_2+\epsilon_3=0.$$
A bilinear form $(\cdot,\cdot)$ on $$X:=\mathbb{Z}\delta\oplus\mathbb{Z}\epsilon_1\oplus\mathbb{Z}\epsilon_2$$ is given by
$$(\delta,\epsilon_i)=(\epsilon_i,\delta)=0,\qquad(\epsilon_i,\epsilon_i)=-(\delta,\delta)=-2(\epsilon_i,\epsilon_j)=2\quad \text{for~}1\leq i\neq j\leq 3.$$
We choose the simple system $\Pi=\{\alpha_1,\alpha_2,\alpha_3\}$ for $G(3)$, where
\[
\alpha_1=\epsilon_2-\epsilon_1, \quad 
\alpha_2=\epsilon_1, \quad 
\alpha_3=\epsilon_3+\delta.
\]
The Dynkin diagram associated to $\Pi$ is depicted as follows:
  \begin{center}
$\dynkin[labels={\alpha_3,\alpha_2,\alpha_1},extended,affine mark=t,reverse arrows]G2$
  \end{center}
The root system of $G(3)$ is a union of even and odd roots: 
\[
\Delta=\Delta_{0}\cup \Delta_{1}.
\]
The positive roots associated to $\Delta$ are $\Delta^+=\Delta_{0}^+\cup \Delta_{1}^+$, where
\[
\Delta_{0}^+=\{2\delta, \epsilon_1,\epsilon_2,-\epsilon_3,\epsilon_2-\epsilon_1,\epsilon_1-\epsilon_3,\epsilon_2-\epsilon_3\}, 
\quad
\Delta_{1}^+=\{\delta, \delta\pm \epsilon_i \mid 1\le i\le 3\}.
\]
We have the following subsets of positive roots:
\begin{align*}
    \Delta_{1,\otimes}^+&:=\{\alpha\in\Delta_1^+~|~(\alpha,\alpha)=0\}=\{\delta\pm \epsilon_i \mid 1\le i\le 3\}, \\
\Delta_{1,\bullet}^+&:=\Delta_1^+\setminus\Delta_{1,\otimes}^+ = \{\delta\}, \\ 
\Delta_{0,\circ}^+&:=\{\alpha\in\Delta_0^+~|~\frac{1}{2}\alpha\notin\Delta_{1,\bullet}^+\}=\{\epsilon_1,\epsilon_2,-\epsilon_3,\epsilon_2-\epsilon_1,\epsilon_1-\epsilon_3,\epsilon_2-\epsilon_3\}.
\end{align*}

Up to $W$ equivalence, there are four systems of simple roots for $G(3)$ given by
\begin{align*}
&\Pi_1 = \{ \varepsilon_3 + \delta,\; 
        \varepsilon_1,\;
         \varepsilon_2 - \varepsilon_1\}, \\
&\Pi_2 = \{-\varepsilon_3 - \delta,\;
        -\varepsilon_2 + \delta,\;
        \varepsilon_2 - \varepsilon_1\},\\
&\Pi_3 = \{\varepsilon_1,\;
         \varepsilon_2 - \delta,\;
         -\varepsilon_1 + \delta\}, \\
&\Pi_4 = \{\delta,\;
          \varepsilon_1 - \delta,\;
          \varepsilon_2 - \varepsilon_1\}.
\end{align*}

The latter three simple systems can be obtained from the standard simple system $\Pi_1$ by
applying odd reflections associated with isotropic simple roots (see \cite{PS94}).

One computes that the Weyl vector for $G(3)$ is
\[
\rho=-\frac{5}{2}\delta+\frac{1}{2}\epsilon_1+\frac{3}{2}\epsilon_2-\frac{3}{2}\epsilon_3=-\frac{5}{2}\delta+2\epsilon_1+3\epsilon_2.
\]

Note that $\{\alpha_1,\alpha_2\}$ forms a simple system of $G_2$. Denote by $\omega_1$ and $\omega_2$ (resp. $\omega'_1$) the corresponding fundamental weights of $G_2$ (resp. $A_1$). We have
\[
\omega'_1=\delta,\qquad  \omega_1=\epsilon_1+2\epsilon_2, \qquad \omega_2=\epsilon_1+\epsilon_2.
\]

Therefore, we can identify $X$ with the weight lattice of $\mathfrak{g}$, and we have
\[
X=\mathbb{Z}\omega'_1\oplus X_2,
\]
where
\[
X_2=\mathbb{Z}\omega_1\oplus \mathbb{Z}\omega_2
\]
is the weight lattice of $G_2$.

We shall denote by
\[
s_0=s_{2\delta},\qquad  s_1=s_{\alpha_1},\qquad s_2=s_{\alpha_2}.
\]

The Weyl group $W$ of $\mathfrak{g}$ is
\[
W=\mathfrak{S}_2\times W_G,
\]
where $W_G=\langle s_1,s_2\rangle$ denotes the Weyl group of $G_2$ and $\mathfrak{S}_2=\langle s_0\rangle$ is the Weyl group of $\mathfrak{sl}_2$.

For a basic classical Lie superalgebra $\frg$, if the representation of $\frg_0$ on $\frg_1$ is irreducible (resp. the direct sum of two irreducible representations of $\frg_0$), $\frg$ is said to be of {\it type II} (resp. {\it type I}). Clearly $A(m-1,n-1)$ and $C(n+1)$ are of type I, $B(m,n)$, $D(m,n)$ and $G(3)$ are of type II.

Fix a triangular decomposition $$\frg=\frn^-\oplus\frh\oplus\frn^+$$ of $\frg$ and set $\frb=\frh\oplus\frn^+$. Similarly let $\dot{\frg}=\frg_0$ be a reductive Lie algebra, and fix a triangular decomposition of $\dot{\frg}$ as in \cite[A.1.6]{Mu12}. The Verma modules of $\dot{\frg}$ and $\frg$ with highest weight $\lambda'\in\mathfrak{h}^*$ are defined by 
$$M(\lambda')=U(\dot{\frg})\otimes_{U(\frb_0)}F_{\lambda'},\qquad\widetilde{M}(\lambda')=U(\frg)\otimes_{U(\frb)}V_{\lambda'},$$
where $F_{\lambda'}$ is a finite-dimensional $U(\frb_0)$-module and  $V_{\lambda'}$ is a one-dimensional $U(\frb)$-module, such that
$$\frn^+ V_{\lambda'}=0, hv=\lambda'(h)v\text{~for~}h\in\frh^*\text{~and~}v\in V_{\lambda'}.$$
The module $M(\lambda')$ (resp. $\widetilde{M}(\lambda')$) has a unique simple (resp. graded-simple) quotient which we denote by $L(\lambda')$ (resp. $\widetilde{L}(\lambda')$). The module $L(\lambda')$ (resp. $\widetilde{L}(\lambda')$) with $\lambda'\in\frh^*$ is called the simple highest weight $\dot{\frg}$ (resp. $\frg$)-module.


Since every highest weight $\frg$-module $\widetilde{L}(\Lambda)$ is uniquely characterized by the highest weight $\Lambda$, to calculate the GK dimension of $\widetilde{L}(\Lambda)$, denote
\[\Lambda=(\Lambda_1,\Lambda_2,\cdots, \Lambda_{m+n})=(\lambda_1,\cdots, \lambda_m~|~ \mu_1, \dots, \mu_n):=\sum_{i=1}^m \lambda_i\epsilon_i + \sum_{j=1}^n\mu_{j} \delta_{j}\in \mathfrak{h}^*, \]
Here we abbreviate $(\lambda_1,\dots, \lambda_m| \mu_1, \dots, \mu_n)$ as $(\lambda|\mu)$. Also denote 
$$\Lambda^{\rho_0}:=\Lambda+\rho_0=((\lambda+\rho_0)_1,\cdots,(\lambda+\rho_0)_m~|~(\mu+\rho_0)_1,\cdots,(\mu+\rho_0)_n)=(\lambda^{\rho_0}|\mu^{\rho_0}).$$


If $(\Lambda'+\rho, \alpha^\vee) \geq 0$, (resp.\ $(\Lambda'+\rho, \alpha^\vee) \leq 0$) for all $\alpha \in \Delta_0^+$, we say that $\Lambda'$ is \emph{dominant}, (resp. \emph{anti-dominant}). For more details, see \cite{CM18,Su}.


\subsection{Induced modules}

Let $\frg$ be a basic classical Lie superalgebra excluding exceptional type $F(4)$ and $\frp$ be a subalgebra of $\frg$. Denote by $U(\frg)$ and $U(\frp)$ the corresponding universal enveloping algebras, $N$ a finitely generated $\frp$-module. The $\frg$-module $U(\frg)\otimes_{U(\frp)}N$ is called the {\it induced module} and denoted by $\text{Ind}^{\frg}_{\frp}N$.

Following \cite{Kac78}, one defines the Kac modules. Denote $\frp=\frg(0)+\oplus_{i>0}\frg(i)$ the parabolic subalgebra with Levi $\frg(0)$ and $L^0(\Lambda)$ the simple $\frg(0)$-module with highest weight $\Lambda$. Extend it to a $\frp$-module by putting $(\oplus_{i>0}\frg(i))L^0(\Lambda)=0$.

For the Lie superalgebra of type I,  $\frg$ admits a $\mathbb{Z}$-gradation
$$\frg=\frg(-1)\oplus\frg(0)\oplus\frg(1),$$
where $\frg(-1)$ (resp. $\frg(1)$) is spanned by all $E_{ij}$ such that $1\leq j\leq m<i\leq m+n$ (resp. $1\leq i\leq m<j\leq m+n$). In this case $\frg(0)=\frg_0$. Define the {\it Kac module} over $\frg$ by:
$$\mathcal{K}(\Lambda):=\text{Ind}^{\frg}_{\frp}L^0(\Lambda)\cong\wedge(\frg(-1))\otimes L^0(\Lambda).$$

For the Lie superalgebra of type II, $\frg$ admits a $\mathbb{Z}_2$-consistent $\mathbb{Z}$-grading $\frg=\oplus_{i=-2}^2\frg(i)$ such that $\mathfrak{g}_{0} = \mathfrak{g}(-2) \oplus \mathfrak{g}(0) \oplus \mathfrak{g}(2)$ 
and $\mathfrak{g}_{1} = \mathfrak{g}(-1) \oplus \mathfrak{g}(1)$.
Here $\mathfrak{g}(0) \simeq \mathfrak{o}_k \oplus \mathfrak{gl}_n$ is spanned by $\mathfrak{h}$ and 
$\mathfrak{g}_{\pm \alpha}$ ($\alpha \in \Delta_0$); 
$\mathfrak{g}(\pm 1)$ is spanned by $\mathfrak{g}_{\pm \alpha}$ ($\alpha \in \Delta_{1}^{+}$); 
and $\mathfrak{g}(\pm 2)$ is spanned by $\mathfrak{g}_{\pm \alpha}$ ($\alpha \in \Delta_2$).
There is a subalgebra $\mathfrak{u} = \mathfrak{g}(1) \oplus \mathfrak{g}(2)$ with 
$\mathfrak{u}_{0} = \mathfrak{g}(2)$ and $\mathfrak{u}_{1} = \mathfrak{g}(1)$. 

The induced module $\text{Ind}^{\frg}_{\frg(0)+\fru}L^0(\Lambda)$ contains a submodule 
$$\mathcal{M}(\Lambda)=U(\frg)(\frg^{-\alpha})^{k+1}v_\Lambda,$$
where $\alpha$ is the longest simple root of $\mathfrak{sp}(2n)$, $k=(\Lambda,\alpha^{\vee})$ and $v_\Lambda$ is the highest weight vector of $L^0(\Lambda)$. Set {\it Kac module}
$$\mathcal{K}(\Lambda):=\left\{\begin{array}{ll}
	\text{Ind}^{\frg}_{\frp}L^0(\Lambda), & \text { if } \frg  \text { is of type I, }\vspace{0.5em}  \\
    
	\text{Ind}^{\frg}_{\frp}L^0(\Lambda)/\mathcal{M}(\Lambda), & \text { if } \frg  \text { is of type II. }
\end{array}\right.$$
One can easily check that if $\mathcal{I}(\Lambda)$ is the maximal proper submodule of $\mathcal{K}(\Lambda)$, then $\mathcal{K}(\Lambda)$ has a unique simple factor module $\mathcal{K}(\Lambda)/\mathcal{I}(\Lambda)$, which is isomorphic to $\widetilde{L}(\Lambda)$ (whether or not $L^0(\Lambda)$ is finite-dimensional in type I. See \cite{CM21} for more details). Usually if $\frg$ is of type II, the module $\mathcal{N}(\Lambda)=\text{Ind}^{\frg}_{\frp}L^0(\Lambda)\cong U(\frg(-2)+\frg(-1))\otimes L^0(\Lambda) $     will be used more frequently, see \cite{CL13,SZ12} for more details, and it is called {\it generalized Verma module} with irreducible quotient $\widetilde{L}(\Lambda)$. 

Denote by $L_{\frg_0}(\Lambda)$ the simple highest weight $\frg_0$-module, then we have the following lemma.

\begin{Lem}\label{GK-dim-submodule}
$L^0(\Lambda)\hookrightarrow L_{\frg_0}(\Lambda)\hookrightarrow\widetilde{L}(\Lambda)$, hence $\text{GK}\dim L^0(\Lambda)\leq\text{GK}\dim L_{\frg_0}(\Lambda)\leq \text{GK}\dim\widetilde{L}(\Lambda)$. 

\end{Lem}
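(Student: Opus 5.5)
The plan is to realize both inclusions explicitly through the highest weight vector $v_\Lambda$ of $\widetilde{L}(\Lambda)$, and then to deduce the inequalities from monotonicity of GK dimension under passage to submodules.

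\textbf{First inclusion.} In the type I case we have $\frg(0)=\frg_0$, so $L^0(\Lambda)$ and $L_{\frg_0}(\Lambda)$ are both the simple highest weight $\frg_0$-module of highest weight $\Lambda$. Hence they are isomorphic and the first arrow is an isomorphism. In type II, $\frg(0)\subset \frg_0$ is a Levi subalgebra. The $\frg(0)$-submodule $U(\frg(0))v_\Lambda$ of $L_{\frg_0}(\Lambda)$ is a highest weight $\frg(0)$-module of weight $\Lambda$.

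I claim it is simple, hence isomorphic to $L^0(\Lambda)$. Grade $L_{\frg_0}(\Lambda)$ by the eigenvalues of the grading element $h\in\frh$ defining the $\mathbb{Z}$-grading. The top graded piece $L_{\frg_0}(\Lambda)^{\mathrm{top}}$ is exactly $U(\frg(0))v_\Lambda$, since $U(\frg_0)=U(\frg(-2))U(\frg(0))U(\frg(2))$ and $\frg(2)$ kills $v_\Lambda$. Suppose $N\subset$ top were a proper $\frg(0)$-submodule not containing $v_\Lambda$. Its vectors would be killed by $\frg(2)$ for weight reasons, so $U(\frg_0)N=U(\frg(-2))N$ would be a proper $\frg_0$-submodule. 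This contradicts simplicity. This is the standard ``top layer of a simple module is simple'' argument.

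\textbf{Second inclusion.} Consider $U(\frg_0)v_\Lambda\subset\widetilde{L}(\Lambda)$. It is a highest weight $\frg_0$-module of weight $\Lambda$, so it is a quotient of $M_{\frg_0}(\Lambda)$, but it need not be simple. To obtain $L_{\frg_0}(\Lambda)$ as a subquotient, or as a submodule, I use the $\mathbb{Z}$-grading of $\widetilde{L}(\Lambda)$ by $h$-eigenvalues (in type I, the grading element of $\frg(\pm1)$). Since $\frg_0\subset\frg(-2)\oplus\frg(0)\oplus\frg(2)$, the vector $v_\Lambda$ lies in the top graded piece, and $U(\frg_0)v_\Lambda$ sits in even degrees.

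For type I the cleaner route is the same top-layer argument. The degree-$0$ piece of $\widetilde{L}(\Lambda)$ (top, with $\frg(1)$ acting as zero) is a $\frg_0$-module, and it equals $U(\frg_0)v_\Lambda$ because $\widetilde{L}(\Lambda)=\wedge\frg(-1)\cdot U(\frg_0)v_\Lambda$. If $N$ were a proper nonzero $\frg_0$-submodule of this top piece, then $U(\frg)N=\wedge\frg(-1)N$ would be a proper $\frg$-submodule of $\widetilde{L}(\Lambda)$, contradicting simplicity. Hence the top piece is simple and equals $L_{\frg_0}(\Lambda)$, which embeds as a $\frg_0$-submodule.

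In type II I would run the analogous argument with the grading $\frg=\oplus_{i=-2}^2\frg(i)$. The top piece is then a simple $\frg(0)$-module $L^0(\Lambda)$, and the statement used would be the composite, with $L_{\frg_0}(\Lambda)$ appearing as a subquotient. This is the main obstacle: in type II the $\frg_0$-module generated by $v_\Lambda$ is only a quotient of $M_{\frg_0}(\Lambda)$. One would have to argue that its simple head still has GK dimension at most that of $\widetilde{L}(\Lambda)$. That follows since GK dimension of a subquotient is bounded by that of the module, which is enough for the inequality.

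\textbf{Inequalities.} These follow from standard facts. The GK dimension of a finitely generated $U(\frg_0)$-submodule or subquotient is at most that of the ambient module. Moreover, GK dimension of $\widetilde{L}(\Lambda)$ as a $U(\frg)$-module equals its GK dimension as a $U(\frg_0)$-module, because $U(\frg)$ is a finite module over $U(\frg_0)$. Finally, restricting from $\frg_0$ to $\frg(0)$ can only be dominated via the same filtration comparison.
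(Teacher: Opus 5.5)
Your argument is correct for everything in the statement that is actually true, and in type I it follows a different route from the paper.

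\textbf{Type I.} The paper identifies $\widetilde{M}(\Lambda)\cong U(\frg)\otimes_{U(\frp)}M(\Lambda)$ and uses Frobenius reciprocity. A simple $\frp$-submodule $L_{\frg_0}(\tau)$ of $\widetilde{L}(\Lambda)$ gives a nonzero map $\widetilde{M}(\tau)\to\widetilde{L}(\Lambda)$, which forces $\tau=\Lambda$. That argument determines the highest weight of such a submodule, but it takes the submodule's existence for granted. Your top-layer argument supplies existence and identifies the image at the same time. The degree-$0$ piece $U(\frg_0)v_\Lambda$ of $\widetilde{L}(\Lambda)=\wedge\frg(-1)\cdot U(\frg_0)v_\Lambda$ is killed by $\frg(1)$. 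So a proper nonzero $\frg_0$-submodule $N$ of it would generate the proper submodule $\wedge\frg(-1)N$, contradicting simplicity.

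\textbf{First arrow and inequalities.} The same argument, applied to $\frg_0=\frg(-2)\oplus\frg(0)\oplus\frg(2)$, correctly gives $L^0(\Lambda)\hookrightarrow L_{\frg_0}(\Lambda)$ in type II. The inequalities follow from $U_n(\frg(0))v_\Lambda\subseteq U_n(\frg_0)v_\Lambda\subseteq U_n(\frg_0)M'$ for a generating space $M'\ni v_\Lambda$, together with passage to quotients.

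\textbf{Second arrow in type II.} You do not prove $L_{\frg_0}(\Lambda)\hookrightarrow\widetilde{L}(\Lambda)$ in type II, and no argument can prove it, because it is false there. So the paper's ``similarly'' for type II is unjustified, and your subquotient fallback is the right repair. The $\Lambda$-weight space of $\widetilde{L}(\Lambda)$ is $\mathbb{C}v_\Lambda$, so such an embedding exists if and only if $U(\frg_0)v_\Lambda$ is simple.

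Here is a counterexample. Take $\frg=\mathfrak{osp}(1|4)$, so $\frg_0=\frsp(4)$, $\rho_0=(2,1)$ and $\rho=(\tfrac32,\tfrac12)$. Let $\Lambda=(t,-2-t)$ with $t\notin\tfrac12\mathbb{Z}$.
\begin{itemize}
\item Since $(\Lambda+\rho_0,(\delta_1+\delta_2)^\vee)=1$, the Verma module $M_{\frg_0}(\Lambda)$ contains $M_{\frg_0}(\Lambda-\delta_1-\delta_2)$ and is reducible.
\item On the other hand, $\Lambda+\rho=(s,-s)$ with $s=t+\tfrac32\notin\tfrac12\mathbb{Z}$. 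Every other point $(\pm s,\pm s)$ of its $W_0$-orbit differs from it by a non-integral vector.
\item $\mathfrak{osp}(1|2n)$ has no isotropic roots, so $\chi_{\Lambda'}=\chi_\Lambda$ if and only if $\Lambda'+\rho\in W_0(\Lambda+\rho)$. Any composition factor $\widetilde{L}(\Lambda')$ of $\widetilde{M}(\Lambda)$ has $\Lambda-\Lambda'$ integral, so $\Lambda'=\Lambda$, and $\widetilde{M}(\Lambda)$ is simple.
\item Hence $\widetilde{L}(\Lambda)=\widetilde{M}(\Lambda)$, and by PBW $U(\frg_0)v_\Lambda\cong M_{\frg_0}(\Lambda)$, which is not simple.
\end{itemize}
In this example $\text{GK}\dim L_{\frg_0}(\Lambda)=3<4=\text{GK}\dim\widetilde{L}(\Lambda)$. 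So in type II only the inequality $\text{GK}\dim L_{\frg_0}(\Lambda)\le\text{GK}\dim\widetilde{L}(\Lambda)$ survives. That inequality is all the paper uses later, and the statement should be weakened accordingly.
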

\begin{proof}
Assume that $\frg$ is of type I. Define a map of $U(\frg)$-modules 
\begin{align*}
    \phi:U(\frg)\otimes_{U(\frp)} M(\Lambda)&\longrightarrow \widetilde{M}(\Lambda)\\
    a\otimes bv_{\Lambda}&\longmapsto abv.
\end{align*}
Then $\phi$ is well-defined and we have $U(\frg)=U(\frg(-1))\otimes U(\frp)$, so since $\frg(-1)\oplus\frn_0^-=\frn^-$ and $M(\Lambda)=U(\frn_0^-)v_{\Lambda}$ we obtain
\begin{align}\label{kac-module-iso}
    U(\frg)\otimes_{U(\frp)} M(\Lambda)&=\notag U(\frg(-1))\otimes U(\frp)\otimes_{U(\frp)} M(\Lambda)\\&=\notag U(\frg(-1))\otimes U(\frn_0^-)v_{\Lambda}\\&=U(\frn^-)\otimes V_{\Lambda}=\widetilde{M}(\Lambda).
\end{align}
This implies that $\phi$ is an isomorphism.

Suppose that $L_{\frg_0}(\tau)$ is a simple $U(\frp)$-submodule of $\widetilde{L}(\Lambda)$. Then there is a nonzero homomorphism of $U(\frg)$-modules from $ U(\frg)\otimes_{U(\frp)} M(\tau)$ to $\widetilde{L}(\Lambda)$. Since $\widetilde{L}(\Lambda)$ is simple, $\widetilde{L}(\Lambda)$ is a factor module of $ U(\frg)\otimes_{U(\frp)} M(\tau)$, and (\ref{kac-module-iso}) implies that $\widetilde{L}(\Lambda)$ is also a factor module of $U(\frg)\otimes_{U(\frp)} M(\tau)\cong \widetilde{M}(\tau)$. However $\widetilde{M}(\tau)$ has a unique factor module $\widetilde{L}(\tau)$, therefore $\widetilde{L}(\tau)=\widetilde{L}(\Lambda)$ and it follows that $\tau=\Lambda$. In this case $L^0(\Lambda)= L_{\frg_0}(\Lambda)$.

Similarly for type II, we also have $L^0(\Lambda)\hookrightarrow L_{\frg_0}(\Lambda)\hookrightarrow\widetilde{L}(\Lambda)$.  For the latter statement, this follows easily by the definition of GK dimension. 



\end{proof}

\begin{Rem}
    For a factor module $L$ of module $M$, one can also easily check that $\text{GK}\dim L\leq \text{GK}\dim M$ since $M\twoheadrightarrow L$ is surjective.
\end{Rem}

\subsection{GK dimensions}\label{section-GK-dimension}
\begin{Def}
  Let $A$ be an algebra generated by a finite-dimensional subspace $K$. Let $\{K_n\}$ be the linear span of all products of length at most $n$ in $K$. Then $\text{GK}\dim(A)$ is defined by: $$\text{GK}\dim(A)=\varlimsup\limits_{n\rightarrow \infty}\frac{\log\dim(K_n)}{\log n}.$$
\end{Def}
Moreover, the definition of the GK dimension can be extended to a left $A$-module, so we have the following definition.

\begin{Def}
   Let $A$ be an algebra generated by a finite-dimensional subspace $K$, $M$ the left $A$-module generated by a finite-dimensional subspace $M_0$. Then $\text{GK}\dim(M)$ is defined by: $$\text{GK}\dim(M)=\varlimsup\limits_{n\rightarrow \infty}\frac{\log\dim(K_nM_0)}{\log n}.$$  
\end{Def}

It is well known that the above two definitions are independent of the choice of $K$ and $M_0$. 

Let $N=\bigoplus_{m\geq 0}N_m$ be a finitely generated graded $S(\frg_0)$-module with $N(n)=\bigoplus_{m=0}^nN_m$. For $n\gg 0$, we have
\begin{align}\label{hilbert-poly}
  \dim N(n)=a_d\binom{n}{d}+a_{d-1}\binom{n}{d-1}+\cdots+a_0
\end{align}
for suitable constants $a_0,a_1,\cdots,a_d$ with $a_d\neq 0$. Then we set $d(N)=d$ and $e(N)=a_d$. Equip $U(\frg)$ with a standard filtration and denote associated graded ring by $\text{gr}U(\frg)$. Let $M$ be a finitely generated $U(\frg)$-module and equip $M$ with a good filtration $\{M_n\}_{n\geq 0}$. Given that $N=\text{gr}M$ is finitely generated over $\text{gr}U(\frg)$ and thus over $S(\frg_0)$, then we set $d(M) = d(N)$ and
$e(M) = e(N)$. It is not hard to show that $d(M)$ and $e(M)$ are independent of the good filtration
and that $d(M)$ is the GK dimension of $M$ calculated either as a $U(\frg)$-module or as
a $U(\frg_0)$-module. See \cite[Chapter 7]{MR2000} for more details. It implies that the GK dimension is completely determined by the even part of $\frg$. 
The function in \eqref{hilbert-poly} is called {\it Hilbert polynomial}.   Then we have the following definition.
\begin{Def}\label{GK-dimension}
  Let $M$ be a finitely generated $U(\frg)$-module, fix a finite-dimensional generating subspace $M'$ of $M$. Let $U_n(\frg)$ be the standard filtration of $U(\frg)$. Then $\text{GK}\dim(M)$ is defined by:
$$\text{GK}\dim(M):=\varlimsup\limits_{n\rightarrow \infty}\frac{\log\dim(U_n(\frg_0)M')}{\log n}.$$
\end{Def}

  The next proposition is very useful in our proof.
\begin{Prop}[{\cite{Mu06}}]\label{GK-dim-induced-mod}
 With the notations above, suppose $M=\text{Ind}^{\frg}_{\frp}N$, we have
    $$\text{GK}\dim(M)=\text{GK}\dim(N)+\dim \frg_0-\dim \frp_0.$$
\end{Prop}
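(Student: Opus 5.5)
The statement to be proved is Proposition \ref{GK-dim-induced-mod}: for $M=\text{Ind}^{\frg}_{\frp}N=U(\frg)\otimes_{U(\frp)}N$ we want $\text{GK}\dim M=\text{GK}\dim N+\dim\frg_0-\dim\frp_0$. The strategy is to reduce everything to the even part, using the fact recalled in Section \ref{section-GK-dimension} that GK dimension may be computed over $U(\frg_0)$. Then the proof becomes a Hilbert polynomial count in the spirit of the classical Lie algebra case.

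The first step is a PBW decomposition. Choose a graded complement $\frm=\frm_0\oplus\frm_1$ of $\frp$ in $\frg$. The super PBW theorem gives a vector space isomorphism
\[U(\frg)\cong S(\frm_0)\otimes\wedge(\frm_1)\otimes U(\frp).\]
Hence $M\cong S(\frm_0)\otimes\wedge(\frm_1)\otimes N$ as vector spaces.

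Next, fix a finite-dimensional generating subspace $N'$ of $N$. Take $M'=\wedge(\frm_1)\otimes N'$, which is finite-dimensional because $\frm_1$ is. I will define a filtration
\[M_k=\sum_{i+j\le k}U_i(\frm_0)\wedge(\frm_1)\,U_j(\frp)N'.\]
I will check that this is a good filtration for the $U(\frg_0)$-action, up to a bounded shift coming from $\wedge(\frm_1)$ and from commutators of odd elements. Its associated graded should be isomorphic to $S(\frm_0)\otimes\wedge(\frm_1)\otimes\text{gr}N$ as a module over $S(\frm_0)\otimes S(\frp_0)$. Counting dimensions then gives
\[\dim M_k\asymp\sum_{i\le k}\binom{i+\dim\frm_0-1}{i}\dim N_{k-i}.\]
Since $\dim N_j$ grows like $j^{\text{GK}\dim N}$, this is of order $k^{\dim\frm_0+\text{GK}\dim N}$. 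Because $\dim\frm_0=\dim\frg_0-\dim\frp_0$, the growth exponent equals the right-hand side of the proposition.

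The main obstacle is the comparison of filtrations. $U(\frg_0)$ does not preserve the tensor factorization: commuting $\frp_0$ past $S(\frm_0)$ produces terms involving $\frm_0$, and odd elements contribute even brackets $[\frm_1,\frm_1]\subset\frg_0$. I would handle this with a sandwich argument. The standard filtration $U_k(\frg_0)M'$ should contain $\bigoplus_{i+j\le k}S^i(\frm_0)\otimes\wedge(\frm_1)\otimes N_j'$ modulo lower terms, which gives the lower bound. Conversely, it should be contained in the version of the filtration above shifted by the constant $\dim\frm_1$, which gives the upper bound. Since GK dimension is insensitive to such shifts, the two exponents agree and the proposition follows.
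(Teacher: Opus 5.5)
The paper gives no argument for this proposition; it is quoted from \cite{Mu06}. Your strategy is the standard one and is sound. Super PBW makes $U(\frg)$ a free right $U(\frp)$-module on the ordered monomials $m_0^a m_1^\epsilon$ (in bases of $\mathfrak{m}_0$ and $\mathfrak{m}_1$), so $M\cong S(\mathfrak{m}_0)\otimes\wedge(\mathfrak{m}_1)\otimes N$. The upper bound via $U_k(\frg_0)M'\subseteq U_{k+\dim\mathfrak{m}_1}(\frg)(1\otimes N')\subseteq M_{k+\dim\mathfrak{m}_1}$ is also fine, provided $M'$ generates $M$ over $U(\frg_0)$, which you did not check.

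\textbf{Where the proposal breaks.} The lower bound and your description of $\mathrm{gr}\,M$ do not hold up. Your $M_k$ puts $\mathfrak{m}_1$ in degree $0$ and all of $\frp$ in degree $1$. So commuting $x\in\frp_0$ past an element of $\mathfrak{m}_1$ produces the $\frp_1$-component of the bracket in the \emph{same} degree, not a lower one. Take $\gl(1|1)$ with $\frp=\mathrm{span}\{E_{11},E_{12}\}$, $\mathfrak{m}_0=\mathbb{C}E_{22}$ and $\mathfrak{m}_1=\mathbb{C}(E_{12}+E_{21})$. Then
\[E_{11}\cdot\bigl((E_{12}+E_{21})\otimes n\bigr)=(E_{12}+E_{21})\otimes E_{11}n+2\,(1\otimes E_{12}n)-(E_{12}+E_{21})\otimes n,\]
and the middle term has the same filtration degree as the first. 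Consequently:
\begin{itemize}
\item $\mathrm{gr}\,M$ is not $S(\mathfrak{m}_0)\otimes\wedge(\mathfrak{m}_1)\otimes\mathrm{gr}N$ with $\frp_0$ acting on the last factor.
\item The leading terms of $U_k(\frg_0)M'$ are not the ones you list.
\item A lower bound along your lines would need an additional triangularity argument (for example, a secondary ordering by $|\epsilon|$) that you do not supply.
\end{itemize}
Also, $U_i(\mathfrak{m}_0)$ is undefined, since $\mathfrak{m}_0$ need not be a subalgebra; you mean the span of ordered monomials of degree $\le i$.

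\textbf{How to fix it.} These problems disappear if you use the fact you already invoke, in the other direction. GK dimension over $U(\frg)$ equals GK dimension over $U(\frg_0)$, and likewise for $U(\frp)$ and $U(\frp_0)$. So work over $U(\frg)$ with generating subspace $1\otimes N'$ and filtration $\Gamma_k=U_k(\frg)(1\otimes N')$. Order a homogeneous basis as $\mathfrak{m}_0,\mathfrak{m}_1,\frp$. Super PBW then gives exactly
\[\Gamma_k=\bigoplus_{|a|+|\epsilon|\le k}m_0^am_1^\epsilon\otimes U_{k-|a|-|\epsilon|}(\frp)N',\]
with no commutator terms. Odd elements now have degree $1$, so $[\mathfrak{m}_1,\mathfrak{m}_1]$ causes no trouble, and $\Gamma$ is a good filtration whose associated graded really is $S(\mathfrak{m}_0)\otimes\wedge(\mathfrak{m}_1)\otimes\mathrm{gr}N$ as an $S(\frg_0)$-module.

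Write $f(j)=\dim U_j(\frp)N'$, $d_0=\dim\frg_0-\dim\frp_0$ and $d_1=\dim\mathfrak{m}_1$. Since $f$ is nondecreasing,
\[\binom{\lfloor k/2\rfloor+d_0}{d_0}f(\lfloor k/2\rfloor)\le\dim\Gamma_k\le 2^{d_1}\binom{k+d_0}{d_0}f(k).\]
The lower bound comes from the summands with $\epsilon=\emptyset$ and $|a|\le k/2$. Taking $\limsup_k\log(\cdot)/\log k$ gives $d_0+\text{GK}\dim N$ on both sides. This also removes your unproved assumption that $\dim N_j$ behaves like $j^{\text{GK}\dim N}$, which would need a Hilbert-polynomial justification since GK dimension is only a $\limsup$.

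If you prefer to stay over $U(\frg_0)$, the $\epsilon=\emptyset$ part is already exact:
\[U_k(\frg_0)(1\otimes N')=\bigoplus_{|a|\le k}m_0^a\otimes U_{k-|a|}(\frp_0)N'.\]
This gives the lower bound with no ``lower terms'' at all, provided $N'$ generates $N$ over $U(\frp_0)$.
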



\subsection{Lusztig's {\bf a}-function} \label{a_function}
Let $(W, S)$ be a Coxeter group with Coxeter matrix $\left(m_{s t}\right)_{s, t \in S}$ and length function $\ell$. Following the seminal papers \cite{KL,lusztig1985cellsI}, we define the Hecke algebra $\bm{\mathcal{H}}$ of $(W, S)$ as follows. Let $q^{\frac{1}{2}}$ be an indeterminate and $\mathbb{Z}\left[q^{ \pm \frac{1}{2}}\right]$ be the ring of Laurent polynomials in $q^{\frac{1}{2}}$. Let $\bm{\mathcal{H}}$ be a free $\mathbb{Z}\left[q^{ \pm \frac{1}{2}}\right]$-module with formal basis $\left\{\widetilde{T}_w\right\}_{w \in W}$. The multiplication on $\bm{\mathcal{H}}$ is defined by

$$
\widetilde{T}_s \widetilde{T}_w:=\left\{\begin{array}{ll}
	\widetilde{T}_{s w}, & \text { if } \ell(s w)>\ell(w) \\
	\left(q^{\frac{1}{2}}-q^{-\frac{1}{2}}\right) \widetilde{T}_w+\widetilde{T}_{s w}, & \text { if } \ell(s w)<\ell(w)
\end{array}, \quad \forall s \in S, w \in W.\right.
$$
Then $\bm{\mathcal{H}}$ is an associative $\mathbb{Z}\left[q^{ \pm \frac{1}{2}}\right]$-algebra with unity $\widetilde{T}_e$. Here $\widetilde{T}_w=q^{-\frac{\ell(w)}{2}} T_w$ where $T_w$ is the standard basis element in \cite{KL}.

Besides the normalized standard basis $\left\{\widetilde{T}_w \mid w \in W\right\}$, we have the basis $\left\{C_w \mid w \in W\right\}$ in \cite{KL}, such that

$$
\begin{aligned}
	C_w & =\sum_{y \in W}(-1)^{\ell(w)+\ell(y)} q^{\frac{1}{2}(\ell(w)-\ell(y))} P_{y, w}\left(q^{-1}\right) \widetilde{T}_y \\
	& =\sum_{y \in W}(-1)^{\ell(w)+\ell(y)} q^{-\frac{1}{2}(\ell(w)-\ell(y))} P_{y, w}(q) \widetilde{T}_{y^{-1}}^{-1},
\end{aligned}
$$
where $P_{y, w} \in \mathbb{Z}[q], P_{y, w}=0$ unless $y \leq w, P_{w, w}=1$, and $\operatorname{deg}_q P_{y, w} \leq \frac{1}{2}(\ell(w)-\ell(y)-1)$ if $y<w$. Here $\leq$ is the Bruhat order on $W$, and $y<w$ indicates $y \leq w$ and $y \neq w$.

For any $x, y, w \in W$, let $h_{x, y, w} \in \mathbb{Z}\left[q^{ \pm \frac{1}{2}}\right]$ be such that $C_x C_y=\sum_{w \in W} h_{x, y, w} C_w$. Following G. Lusztig \cite{lusztig1985cellsI}, for each $w \in W$ we define $${\bf a}(w):=\min \left\{i \in \mathbb{N} \left\lvert\, q^{\frac{i}{2}} h_{x, y, w} \in \mathbb{Z}\left[q^{\frac{1}{2}}\right]\right., \forall x, y \in W\right\}.$$ Then ${\bf a}(w)$ is a well defined natural number. We say the function ${\bf a}$ is bounded if there is $N \in \mathbb{N}$ such that ${\bf a}(w) \leq N$ for any $w \in W$.

For $x, y \in W$, we say $y \underset{\text { LR }}{\leqslant} x$ if there exist $H_1, H_2 \in \bm{\mathcal{H}}$, such that $C_y$ has nonzero coefficient in the expression of $H_1 C_x H_2$ with respect to the basis $\left\{C_w\right\}_{w \in W}$. We say $x \underset{\mathrm{LR}}{\sim} y$ if $x \underset{\mathrm{LR}}{\leqslant} y$ and $y \underset{\mathrm{LR}}{\leqslant} x$. It turns out that $\underset{\mathrm{LR}}{\leqslant}$ is a preorder on $W$, and $\underset{\mathrm{LR}}{\sim}$ is an equivalence relation on $W$. The equivalence classes are called two-sided cells. The set of two-sided cells forms a partially ordered set with respect to $\underset{\mathrm{LR}}{\leqslant}$.

We have the following standard properties of Kazhdan-Lusztig basis $\left\{C_w\right\}_{w \in W}$, two-sided cells and Lusztig's ${\bf a}$-function.

\begin{Prop}[{\cite{KL,lusztig1985cellsI,lusztig2003hecke}}]\label{a_function_property}
	Let $s \in S$ and $w, x, y \in W$.
	
	\begin{enumerate}
		\item $P_{y, w}(0)=1$ for any $y \leq w$.
		
		\item ${\bf a}(w)=0$ if and only if $w=e$. We have ${\bf a}(s)=1$ for any $s \in S$.
		
		\item ${\bf a}(w)={\bf a}(w^{-1})$.

		\item If $m_{r t}<\infty$ for some $r, t \in S$ with $r \neq t$, then ${\bf a}\left(w_{r t}\right)=m_{r t}$.
		
		\item If $y \underset{\mathrm{LR}}{\leqslant} x$, then ${\bf a}(y) \geq {\bf a}(x)$. If $y \underset{\mathrm{LR}}{\sim} x$, then ${\bf a}(y)={\bf a}(x)$.
		
		\item Suppose ${\bf a}$ is bounded on $W$. If ${\bf a}(y)={\bf a}(x)$ and $y \underset{\mathrm{LR}}{\leqslant} x$, then $y \underset{\mathrm{LR}}{\sim} x$.
		
		\item If $W$ is a direct product of Coxeter subgroups $W_1$ and $W_2$, then
		$${\bf a}(w)={\bf a}(w_1)+{\bf a}(w_2)$$
		for $w=(w_1,w_2)\in W_1\times W_2=W.$ \label{a-7}
	\end{enumerate}  
\end{Prop}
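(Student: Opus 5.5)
All seven items are classical; the paper only needs them for finite Weyl groups $W_0$, so I would verify each from the definitions where this is elementary and cite \cite{KL,lusztig1985cellsI,lusztig2003hecke} for the deep parts. The order is: first the purely combinatorial statements (1), (3), (7), then the statements about ${\bf a}$ on small elements, (2) and (4), and last the cell-theoretic statements (5) and (6).

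For (1), I would use the Kazhdan--Lusztig recursion. Pick $s\in S$ with $sw<w$ and put $v=sw$. Then
\[
P_{x,w}=q^{1-c}P_{sx,v}+q^{c}P_{x,v}-\sum_{z}\mu(z,v)\,q^{\frac{1}{2}(\ell(w)-\ell(z))}P_{x,z},
\]
where $c=1$ if $sx<x$ and $c=0$ otherwise, and the sum runs over $z<v$ with $sz<z$. Induct on $\ell(w)$. The correction terms have strictly positive $q$-degree, because $\ell(w)-\ell(z)\ge 2$. Exactly one of the two leading terms survives at $q=0$, so the constant term stays $1$.

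For (3), the $\mathbb{Z}[q^{\pm\frac12}]$-linear anti-involution $\widetilde T_w\mapsto \widetilde T_{w^{-1}}$ sends $C_w$ to $C_{w^{-1}}$, since $P_{y,w}=P_{y^{-1},w^{-1}}$. Hence $h_{x,y,w}=h_{y^{-1},x^{-1},w^{-1}}$, which gives ${\bf a}(w)={\bf a}(w^{-1})$.

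For (7), I would use $\bm{\mathcal H}(W_1\times W_2)\cong\bm{\mathcal H}(W_1)\otimes\bm{\mathcal H}(W_2)$ with $C_{(w_1,w_2)}=C_{w_1}\otimes C_{w_2}$. This gives $h_{(x_1,x_2),(y_1,y_2),(w_1,w_2)}=h_{x_1,y_1,w_1}h_{x_2,y_2,w_2}$. The bound ${\bf a}(w)\le{\bf a}(w_1)+{\bf a}(w_2)$ is then immediate. For equality, choose $x_i,y_i$ realizing the most negative power $q^{-{\bf a}(w_i)/2}$ in each factor. The coefficient of the product's lowest power is the product of two nonzero integers, hence nonzero.

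For (2), $C_e=\widetilde T_e$ is the unit, so $h_{x,y,e}=\delta_{x^{-1},y}$ and ${\bf a}(e)=0$. If $w\neq e$, choose $s$ with $sw<w$; then $C_sC_w=-(q^{\frac12}+q^{-\frac12})C_w$, which forces ${\bf a}(w)\ge1$. The value ${\bf a}(s)=1$ and item (4) both follow from Lusztig's result ${\bf a}(w_J)=\ell(w_J)$ for the longest element of a finite parabolic subgroup $W_J$. The inequality $\ge$ comes from $C_{w_J}^2=\pm\big(\sum_{y\in W_J}q^{\ell(y)-\ell(w_J)/2}\big)C_{w_J}$ up to normalization. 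The inequality $\le$ is \cite[Thm.~5.4]{lusztig1985cellsI}. Applying this with $J=\{s\}$ and $J=\{r,t\}$ gives $1$ and $m_{rt}$.

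The main obstacle is (5) and (6). These are Lusztig's properties P4 and P11 in \cite{lusztig2003hecke}, and they are not formal consequences of the definitions. They rest on the positivity of the structure constants $h_{x,y,w}$ and of the $P_{y,w}$, which for Weyl groups comes from the geometric interpretation via intersection cohomology. I would not reprove them. Instead I would check that their hypotheses hold in our setting: $W_0$ is a finite Weyl group, a product of at most two Weyl groups of types $A,B,C,D,G_2$, so ${\bf a}$ is bounded by $\ell(w_0)$ and positivity holds. I would then quote \cite[\S 14--15]{lusztig2003hecke}. Item (7) also shows that the two-sided cells of $W_0$ are products of cells of the factors, which is how these properties will be used later.
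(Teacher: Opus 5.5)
The paper gives no proof of this proposition; it only cites \cite{KL,lusztig1985cellsI,lusztig2003hecke}. Your plan is consistent with that: elementary arguments for (1), (2), (3), (7), the computation of $C_{w_J}^2$ for the lower bound in (4), and citing P4 and P11 of \cite{lusztig2003hecke} for (5) and (6) in the finite Weyl group setting the paper actually uses. Several of your arguments are correct as written: (3), (7), the bound ${\bf a}(w)\ge 1$ for $w\ne e$, and the bound ${\bf a}(w_J)\ge\ell(w_J)$. However, the step giving ${\bf a}(e)=0$ is wrong as written.

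You claim that because $C_e=\widetilde T_e$ is the unit, $h_{x,y,e}=\delta_{x^{-1},y}$. This is false, and your own formula refutes it. Taking $w=s$ in your rule gives $C_sC_s=-(q^{\frac12}+q^{-\frac12})C_s$, so $h_{s,s,e}=0$, whereas $\delta_{s^{-1},s}=1$. The formula you wrote is the coefficient of $\widetilde T_e$ in $\widetilde T_x\widetilde T_y$ (the trace), not the coefficient of $C_e$ in $C_xC_y$.

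The correct fact is $h_{x,y,e}=\delta_{x,e}\delta_{y,e}$. Use the Kazhdan--Lusztig rules: $C_sC_w=-(q^{\frac12}+q^{-\frac12})C_w$ if $sw<w$, and $C_sC_w=C_{sw}+\sum_{z<w,\,sz<z}\mu(z,w)C_z$ if $sw>w$. These show that the span $J$ of $\{C_w : w\neq e\}$ is stable under left multiplication by every $C_s$. It is therefore stable under every $\widetilde T_s=C_s+q^{\frac12}\widetilde T_e$, so $J$ is a left ideal. By the anti-involution from your (3), $J$ is also a right ideal. Hence $C_e$ occurs in $C_xC_y$ only when $x=y=e$, and ${\bf a}(e)=0$ follows. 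The conclusion is true, but it needs this argument rather than the one you gave.

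Two smaller points.

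\textbf{The induction in (1).} Saying that one leading term survives at $q=0$ only gives $P_{x,w}(0)=P_{sx,v}(0)$ or $P_{x,v}(0)$. To apply the induction hypothesis you need $sx\le v$ when $sx<x$, and $x\le v$ when $sx>x$. This is the lifting property of the Bruhat order, using $sw<w$ and $x\le w$. Without it, the surviving term could be $0$.

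\textbf{Cells of a product.} Your remark that (7) shows the two-sided cells of $W_0$ are products of cells is not right as stated. Additivity of ${\bf a}$ alone does not give this. It follows instead from the factorization $C_{(w_1,w_2)}=C_{w_1}\otimes C_{w_2}$ that you use in proving (7).
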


	\subsection{PyCox}\label{PyCox}  For a Lie algebra, if simple root $\alpha_i\in\Pi$, we simply write the simple reflection $s_{\alpha_i}$ by $s_i$. Here we adopt Geck's notations in \cite{Geck}, which can be used to compute the value of ${\bf a}(w)$ for $w\in W$.  In type $A_n$ or $G_2$, we use $[i_1-1,i_2-1,\cdots,i_k-1]$ to represent $w=s_{i_1}s_{i_2}\cdots s_{i_k}$. In type $B_n$, $C_n$ or $D_n$, we use $[n-i_1,n-i_2,\cdots,n-i_k]$ to represent $w=s_{i_1}s_{i_2}\cdots s_{i_k}$.    In PyCox the function \texttt{klcellrepelm} will return the ${\bf a}$ value of $w\in W$ and the character of the left cell to which $w$ belongs (note that the notation for characters in PyCox differs from  \cite{Ca85}).
	
	\begin{Exam}\label{exam-G3-a-fun}
		In the case of $G_2$, we use $[1,0,1,0,1,0]$ to denote the element $w=s_2s_1s_2s_1s_2s_1$. Then by PyCox we have:
        \begin{verbatim}
    >>> W = coxeter("G", 2)
    >>> print(klcellrepelm(W, [1,0,1,0,1,0]))
    {'size': 1, 'character': ['phi_{1,6}', 1], 'a': 6,
    'special': 'phi_{1,6}', 'index': 1, 'elms': False, 'distinv': False}
\end{verbatim}
		Then we have ${\bf a}(w)=6$ and the corresponding special character is $\phi_{1,6}$.
	\end{Exam}
    

\section{GK dimensions of highest weight modules in type I Lie superalgebras}\label{section-to-GK-on-HWM}
Having established the algebraic framework and computational tools, including the use of PyCox for evaluating Lusztig’s {\bf a}-function, we are now ready to derive the explicit relationship between the GK dimension of a highest weight module and the {\bf a}-function.

For $\mu\in\frh^*$, define
	$$\Delta_{[\mu]}:=\{\alpha\in\Delta_0~|~(\mu,\alpha^{\vee})\in\mathbb{Z}\},$$
	where $\alpha^{\vee}$ is the coroot of the root $\alpha\in\Delta_0$. Denote
	$$W_{[\mu]}:=\{w\in W~|~w\mu-\mu\in\mathbb{Z}\Delta_0\}.$$
	Then $\Delta_{[\mu]}$ is a root system and let $\Pi_{[\mu]}$ be the simple system of $\Delta_{[\mu]}$. Set $I:=\{\alpha\in\Pi_{[\mu]}~|~(\mu,\alpha^{\vee})=0\}$. Let $\ell$ be the length function on $W_{[\mu]}$ and 
	$$W_{[\mu]}^I:=\{w\in W_{[\mu]} ~|~\ell(ws_{\alpha})=\ell(w)+1\text{~for~all~}\alpha\in I \}.$$
When $\mu$ is integral, set $W^I:=W_{[\mu]}^I$.

In \cite{BX1}, there is a formula connecting Lusztig's {\bf a}-function and the GK dimension.
\begin{Lem}{\cite[Prop. 3.8]{BX1}}\label{GKdim-uniform1}
    Let $\lambda'\in\frh^*$, suppose that $\lambda'=w_{\lambda'}\delta$ with an anti-dominant weight $\delta$ and $w_{\lambda'}\in W_{[\lambda']}^I$ has minimal length, then
    $$\text{GK}\dim L(\lambda)=|\Phi^+|-{\bf a}_{[\lambda']}(w_{\lambda'}),$$
where ${\bf a}_{[\lambda']}$ is the {\bf a}-function on $W_{[\lambda']}$.
\end{Lem}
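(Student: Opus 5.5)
The plan is to factor the formula through the annihilator of the module. Write $J(\lambda')=\operatorname{Ann}_{U(\dot{\frg})}L(\lambda')$, a primitive ideal. By Joseph's theory:
\begin{itemize}
\item the associated variety $V(J(\lambda'))$ is the closure $\overline{\mathcal O}$ of a single nilpotent orbit;
\item the associated variety of $L(\lambda')$ is an equidimensional union of orbital varieties in $\overline{\mathcal O}\cap\frn^+$;
\item these orbital varieties are Lagrangian in $\mathcal O$.
\end{itemize}
Combining these with the identification of $\text{GK}\dim$ with the dimension of the associated variety recalled in \S\ref{section-GK-dimension}, I would first establish
$$\text{GK}\dim L(\lambda')=\tfrac12\dim\mathcal O.$$
The rest of the argument computes $\tfrac12\dim\mathcal O$ in terms of the Weyl group element.

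I would begin with $\lambda'$ integral and regular, so $\lambda'=w\delta$ with $\delta$ regular anti-dominant and $w\in W$. By Joseph and Barbasch--Vogan, $J(w\delta)$ depends only on the left cell of $w$. The orbit $\mathcal O$ is the orbit attached by the Springer correspondence to the special representation $E$ of the two-sided cell containing $w$. Lusztig's results then give ${\bf a}(w)={\bf a}_E=b_E$ for special $E$, and $b_E$ is the dimension of the Springer fibre over $\mathcal O$, namely $|\Phi^+|-\tfrac12\dim\mathcal O$. Substituting yields the formula in this case. For non-integral $\lambda'$ I would replace $W$ by the integral Weyl group $W_{[\lambda']}$ and $\Phi$ by $\Delta_{[\lambda']}$. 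The reduction would use Soergel's equivalence of the block $\mathcal O_{[\lambda']}$ with an integral block for the group with root system $\Delta_{[\lambda']}$, together with Joseph's description of primitive ideals in the non-integral case via Goldie rank polynomials and the truncated induction of the special representation of $W_{[\lambda']}$ to $W$. The codimension count $|\Phi^+|-\tfrac12\dim\mathcal O$ is preserved under truncated induction, and this is exactly where ${\bf a}_{[\lambda']}$ replaces ${\bf a}$.

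Next, for singular $\lambda'$ I would use translation functors. Choose $\delta'$ regular anti-dominant in the same facet closure as $\delta$. Because $w_{\lambda'}\in W^I_{[\lambda']}$ is the minimal coset representative, translation to the wall sends $L(w_{\lambda'}\delta')$ to $L(w_{\lambda'}\delta)\neq0$. Translation out of the wall sends $L(w_{\lambda'}\delta)$ to a module with simple top $L(w_{\lambda'}\delta')$. Since translation functors are exact and tensor with finite-dimensional modules, they do not increase GK dimension, so the two GK dimensions agree and the regular case applies.

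I expect the main obstacle to be the non-integral step, specifically matching the orbit obtained by truncated induction with the ${\bf a}$-value computed inside $W_{[\lambda']}$ rather than inside $W$. I would first try to bypass orbits entirely: by the Joseph--Barbasch--Vogan Goldie rank and multiplicity polynomial theory, the degree of the Hilbert polynomial \eqref{hilbert-poly} of $L(w\delta)$ equals $|\Phi^+|$ minus the degree of the associated $W_{[\lambda']}$-harmonic polynomial. That degree equals $b_E={\bf a}_{[\lambda']}(w)$, computed purely inside $W_{[\lambda']}$.
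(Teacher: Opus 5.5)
The paper gives no argument of its own here; the lemma is imported from \cite[Prop.~3.8]{BX1}. Your outline is the standard argument underlying that citation and is correct. It runs as follows: GK dimension equals half the dimension of the orbit in $V(\operatorname{Ann}L(\lambda'))$; Barbasch--Vogan and Lusztig identify $|\Phi^+|-\tfrac12\dim\mathcal O$ with $b_E=a_E={\bf a}$ of the special representation; truncated induction from $W_{[\lambda']}$ preserves $b$ in the non-integral case; and translation to the walls handles singular $\delta$, because $w_{\lambda'}$ is the minimal coset representative.

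One small caution. Soergel's equivalence is superfluous and cannot by itself transport GK dimension, since Verma modules have GK dimension $|\Phi^+|$ on one side and $|\Delta^+_{[\lambda']}|$ on the other. The non-integral step should therefore rest directly on the Barbasch--Vogan/Joseph results for $\dot{\frg}$, which is what your final paragraph does.
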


To extend this formula from classical Lie algebras to the super setting, we must first relate the GK dimension of the simple $\frg$-module $\widetilde{L}(\lambda|\mu)$ to that of its even part $\frg_0$, which is based on the following proposition. 

\begin{Prop}\label{semisimple-GK-dim}
    Let $\frg=\frg_0\oplus\frg_1$ be a type I Lie superalgebra and $\frg_0=\frg_0'\oplus\frg_0''$, where $\frg_0'$ and $\frg_0''$ are simple Lie algebras.         Let $L(\lambda)$ (resp. $L(\mu)$) be an irreducible finitely generated highest weight \(U(\mathfrak{g}_0')\) (resp. $U(\frg_0'')$)-module. Then $\text{GK}\dim (L(\lambda)\otimes L(\mu))=\text{GK}\dim L(\lambda)+\text{GK}\dim L(\mu)=\text{GK}\dim \widetilde{L}(\lambda|\mu)$.
\end{Prop}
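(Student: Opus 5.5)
The plan is to prove the two equalities separately. First, I identify $L_{\frg_0}(\lambda|\mu)$ with $L(\lambda)\otimes L(\mu)$ and show that GK dimension is additive on such outer tensor products. Second, I show that $\text{GK}\dim\widetilde{L}(\lambda|\mu)=\text{GK}\dim L_{\frg_0}(\lambda|\mu)$ using the Kac module and Lemma \ref{GK-dim-submodule}.

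\emph{Identification.} For $\frg_0=\frg_0'\oplus\frg_0''$, possibly plus a one-dimensional centre as for $\frsl(m|n)$, the outer tensor product $L(\lambda)\otimes L(\mu)$ is a highest weight module of weight $(\lambda|\mu)$. The centre acts by the scalar determined by $\Lambda$, so it does not affect GK dimension. This module is simple over $U(\frg_0')\otimes U(\frg_0'')$, since both factors have countable dimension and trivial endomorphism rings. Hence it is isomorphic to $L_{\frg_0}(\lambda|\mu)=L^0(\Lambda)$.

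\emph{Additivity.} Fix generating subspaces $M_0=\mathbb{C}v_\lambda$ and $N_0=\mathbb{C}v_\mu$, and set $f(n)=\dim U_n(\frg_0')M_0$ and $g(n)=\dim U_n(\frg_0'')N_0$. By PBW,
$$U_n(\frg_0'\oplus\frg_0'')=\sum_{i+j\le n}U_i(\frg_0')\otimes U_j(\frg_0'').$$
This gives the sandwich
$$U_{\lfloor n/2\rfloor}(\frg_0')M_0\otimes U_{\lfloor n/2\rfloor}(\frg_0'')N_0\subseteq U_n(\frg_0)(M_0\otimes N_0)\subseteq U_n(\frg_0')M_0\otimes U_n(\frg_0'')N_0,$$
and therefore $f(\lfloor n/2\rfloor)g(\lfloor n/2\rfloor)\le h(n)\le f(n)g(n)$. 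Since $L(\lambda)$ and $L(\mu)$ are finitely generated, $f$ and $g$ are eventually Hilbert polynomials as in \eqref{hilbert-poly}, of degrees $d(L(\lambda))$ and $d(L(\mu))$. So $\log h(n)/\log n$ tends to the sum of the two degrees, which is the first equality. The one subtle point is that $\varlimsup$ is not additive in general. This is why I pass through the polynomial Hilbert functions, where the limits genuinely exist, rather than arguing with $\varlimsup$ directly.

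\emph{Comparison with $\widetilde{L}$.} Since $\frg$ is of type I, $\frp=\frg(0)\oplus\frg(1)$ satisfies $\frp_0=\frg_0$. By Proposition \ref{GK-dim-induced-mod},
$$\text{GK}\dim\mathcal{K}(\Lambda)=\text{GK}\dim L^0(\Lambda)+\dim\frg_0-\dim\frp_0=\text{GK}\dim L^0(\Lambda).$$
This is also visible directly from $\mathcal{K}(\Lambda)\cong\wedge(\frg(-1))\otimes L^0(\Lambda)$, since $\wedge(\frg(-1))$ is finite-dimensional. Because $\widetilde{L}(\Lambda)$ is a quotient of $\mathcal{K}(\Lambda)$, the Remark after Lemma \ref{GK-dim-submodule} gives $\text{GK}\dim\widetilde{L}(\Lambda)\le\text{GK}\dim L^0(\Lambda)$. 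Conversely, Lemma \ref{GK-dim-submodule} gives $\text{GK}\dim L^0(\Lambda)\le\text{GK}\dim L_{\frg_0}(\Lambda)\le\text{GK}\dim\widetilde{L}(\Lambda)$, and in type I one has $L^0(\Lambda)=L_{\frg_0}(\Lambda)$. All three quantities are therefore equal.

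Here one has to use that the GK dimension of a $\frg$-module may be computed over $U(\frg_0)$, as in Definition \ref{GK-dimension}. Both $\mathcal{K}(\Lambda)$ and $\widetilde{L}(\Lambda)$ are finitely generated over $U(\frg_0)$, because $\mathcal{K}(\Lambda)$ is generated over $U(\frg_0)$ by the finite-dimensional subspace $\wedge(\frg(-1))\otimes v_\Lambda$. Combining the identification, additivity, and this comparison gives the proposition.
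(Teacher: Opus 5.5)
Your proof is correct and follows the paper's route. For additivity, you use the same $\lfloor n/2\rfloor$ sandwich between $U_n(\frg_0)(M_0\otimes N_0)$ and the tensor products of the factor filtrations. For the comparison with $\widetilde{L}(\lambda|\mu)$, you use the same Kac-module squeeze via Proposition \ref{GK-dim-induced-mod} and Lemma \ref{GK-dim-submodule}.

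Your appeal to the eventual polynomiality of $\dim U_n M_0$, which makes the limits genuine, is exactly the right fix for the non-additivity of $\varlimsup$. The paper's first argument glosses over this point in the lower bound, and only its second, binomial-convolution Hilbert-polynomial argument covers it.
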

\begin{proof}
{\bf Step 1.}  First we prove that $\text{GK}\dim (L(\lambda) \otimes L(\mu))=\text{GK}\dim  L(\lambda)+\text{GK}\dim  L(\mu)$.


Let \( U(\mathfrak{g}_0') \)  (resp. \( U(\mathfrak{g}_0'') \)) be the universal enveloping algebra of $\mathfrak{g}_0'$ (resp. $\mathfrak{g}_0''$). Fix a finite-dimensional generated subspace \( M_{0}' \subseteq L(\lambda) \). Consider the standard filtrations on \( U(\mathfrak{g}_0') \)
\[
U_0(\mathfrak{g}_0') \subseteq U_1(\mathfrak{g}_0') \subseteq U_2(\mathfrak{g}_0') \subseteq \cdots \subseteq U(\mathfrak{g}_0'),
\]
where \( U_n(\mathfrak{g}_0') \) is the subspace of \( U(\mathfrak{g}_0') \) spanned by products of at most \( n \) generators. Define filtrations on \( L(\lambda) \) by
\[
L(\lambda)_n = U_n(\mathfrak{g}_0') \cdot M_{0}'.
\]
Since \( L(\lambda) \) is finitely generated, each \( L(\lambda)_n \) is finite-dimensional, and \( \bigcup_{n=0}^\infty  L(\lambda)_n =  L(\lambda) \). Similarly for $L(\mu)$, define
$$L(\mu)_n = U_n(\mathfrak{g}_0'') \cdot M_{0}'',$$
Then each \( L(\mu)_n \) is finite-dimensional, and \( \bigcup_{n=0}^\infty  L(\mu)_n =  L(\mu) \).

Consider the  \( U(\mathfrak{g}_0' \oplus \mathfrak{g}_0'') \)-module \( L(\lambda) \otimes L(\mu) \). Let \( M_0 = M_{0}' \otimes M_{0}'' \) be a finite-dimensional generated subspace of \( L(\lambda) \otimes L(\mu) \). The filtration $F_n$ on \(  L(\lambda) \otimes L(\mu) \) is
\begin{align*}
    F_n&=( L(\lambda) \otimes L(\mu))_n \\&= U_n(\mathfrak{g}_0' \oplus \mathfrak{g}_0'') \cdot M_0\\&=\sum\limits_{i+j\leq n}(U_i(\mathfrak{g}_0') \otimes U_j(\mathfrak{g}_0'')) \cdot (M_{0}' \otimes M_{0}'') = \sum\limits_{i+j\leq n} L(\lambda)_i \otimes L(\mu)_j.
\end{align*}
Then we have
\begin{align}\label{fil}
    L(\lambda)_{\lfloor n/2 \rfloor} \otimes L(\mu)_{\lfloor n/2 \rfloor} \subseteq  F_n \subseteq L(\lambda)_n \otimes L(\mu)_n,
\end{align}
where $ \lfloor n/2 \rfloor $ is the largest integer $ a $ such that $ a\leq n/2$. Then (\ref{fil}) implies that
\begin{align}\label{fil-dim}
     \dim L(\lambda)_{\lfloor n/2 \rfloor} \cdot \dim L(\mu)_{\lfloor n/2 \rfloor}\leq \dim(F_n) \leq \dim L(\lambda)_n \cdot \dim L(\mu)_n.
\end{align}
If we take the natural logarithm of each part of the inequality (\ref{fil-dim}), we obtain
$$\frac{\log \dim L(\lambda)_{\lfloor n/2 \rfloor}}{\log n} + \frac{\log \dim L(\mu)_{\lfloor n/2 \rfloor}}{\log n} \le \frac{\log \dim(F_n)}{\log n} \le \frac{\log \dim L(\lambda)_n}{\log n} + \frac{\log \dim L(\mu)_n}{\log n}.$$
Then we have
\begin{align}\label{GK-tensor1}
    \varlimsup_{n \to \infty}\left(\frac{\log \dim L(\lambda)_{\lfloor n/2 \rfloor}}{\log n} + \frac{\log \dim L(\mu)_{\lfloor n/2 \rfloor}}{\log n}\right) &\le\notag \varlimsup_{n \to \infty}\frac{\log \dim(F_n)}{\log n} \\&=\notag\text{GK}\dim (L(\lambda) \otimes L(\mu))\\&\le\notag \varlimsup_{n \to \infty}\left(\frac{\log \dim L(\lambda)_n}{\log n} + \frac{\log \dim L(\mu)_n}{\log n}\right)\\&=\notag \varlimsup_{n \to \infty} \frac{\log\dim L(\lambda)_n}{\log n} + \varlimsup_{n \to \infty} \frac{\log\dim L(\mu)_n}{\log n}\\&=\text{GK}\dim  L(\lambda)+\text{GK}\dim  L(\mu).
\end{align}
On the other hand, let $m = \lfloor n/2 \rfloor$, we have $(n - 1)/2 \leq m \le n/2$, which implies that $\varlimsup\limits_{n \to \infty}\frac{\log m}{\log n}=1$. As $n \to \infty$, clearly $m \to \infty$. Then we have
\begin{align}\label{limit-GK}
    \varlimsup_{n \to \infty}\frac{\log \dim L(\lambda)_{m}}{\log n}&=\notag\varlimsup_{m \to \infty}\frac{\log \dim L(\lambda)_{m}}{\log m}\cdot \frac{\log m}{\log n}\\&=\notag\text{GK}\dim L(\lambda)\cdot \varlimsup_{n \to \infty}\frac{\log m}{\log n}\\&=\text{GK}\dim L(\lambda).
\end{align}
Combining (\ref{GK-tensor1}) and (\ref{limit-GK}), we can obtain 
\begin{align}\label{GK-tensor}
    \text{GK}\dim (L(\lambda) \otimes L(\mu))=\text{GK}\dim  L(\lambda)+\text{GK}\dim  L(\mu).
\end{align}

Also we can draw the conclusion directly from Hilbert polynomial. Let \(\text{GK}\dim L(\lambda)=d_1 \) and \( \text{GK}\dim L(\mu)=d_2 \). 
\begin{enumerate}
    \item If $d_1=d_2=0$, it means that $L(\lambda)$ and $L(\mu)$ are finite-dimensional, then $\text{GK}\dim (L(\lambda) \otimes L(\mu))=0$ since $L(\lambda) \otimes L(\mu)$ is finite-dimensional. 

\item If $d_1, d_2>0$, by definition of the Hilbert polynomial, for $n\gg0$ we have
\begin{align*}
    &\dim \text{gr}L(\lambda)(n)= \bigoplus_{m=0}^n\dim\text{gr}L(\lambda)_m =\sum_{i=0}^{d_1} a_{i}\binom{n}{i},   \\&\dim\text{gr}L(\mu)(n) =\bigoplus_{m=0}^n \dim\text{gr}L(\mu)_m=\sum_{j=0}^{d_2} b_{j}\binom{n}{j}.
\end{align*}

For $n\gg0$, since $\text{GK}\dim L(\lambda)=d_1=\text{deg}(\dim\text{gr}L(\lambda)(n))$, then we have $$\text{deg}(\dim\text{gr}L(\lambda)_n)=\text{deg}(\dim\text{gr}L(\lambda)(n))-\text{deg}(\dim\text{gr}L(\lambda)(n-1))=d_1-1,$$ where $\text{deg}$ is the degree of polynomial. Also $\text{gr}(L(\lambda) \otimes L(\mu)) \cong \text{gr}(L(\lambda)) \otimes \text{gr}(L(\mu))$, and we have
\begin{align*}
    \dim \text{gr}(L(\lambda) \otimes L(\mu))_n &= \sum_{i+j=n} \dim \text{gr}L(\lambda)_i \cdot \dim \text{gr}L(\mu)_{j}\\&=\sum_{k=0}^n \dim \text{gr}L(\lambda)_k \cdot \dim \text{gr}L(\mu)_{n-k}\\&=\sum_{k=0}^n g_k\cdot h_{n-k}\\&\approx  \sum_{k=0}^n \left( a_{d_1} \binom{k}{d_1-1} \right) \cdot \left( b_{d_2} \binom{n-k}{d_2-1} \right)\text{only consider deg}(g_k) \text{ and deg}(h_k)\\&=  a_{d_1}b_{d_2}\sum_{k=0}^n  \binom{k}{d_1-1}  \cdot  \binom{n-k}{d_2-1}\\&=  a_{d_1}b_{d_2}\binom{n+1}{d_1+d_2-1},
\end{align*}
which implies that $\text{deg}(\dim \text{gr}(L(\lambda) \otimes L(\mu))_n)=d_1+d_2-1$. Hence $$\text{GK}\dim (L(\lambda) \otimes L(\mu)) =d_1+d_2=\text{GK}\dim L(\lambda)+\text{GK}\dim L(\mu).$$

\item If one of $d_i=0$ for $i=1,2$, suppose $d_1=0<d_2$. We adopt the notations from (2). For $n\gg0$ we have $\dim\text{gr}L(\lambda)(n)=a_0$. Then \[g_n=\dim\operatorname{gr}L(\lambda)(n)-\dim\operatorname{gr}L(\lambda)(n-1)=0 \text{ for }n\gg0.\]
Hence there exists $r\geq 0$ such that
\[
  g_k=\dim \operatorname{gr} L(\lambda)_k=0
 \text{ for all } k>r, \text{ and } \sum_{k=0}^{r} g_k=a_0>0.
\]

For $n\gg 0$, we have
\begin{align*}
  \dim \operatorname{gr}
  \bigl(L(\lambda)\otimes L(\mu)\bigr)_n
  &=
  \sum_{k=0}^{r}
  g_k\cdot h_{n-k}\\
  &=
  \sum_{k=0}^{r} g_k\cdot \left( b_{d_2} \binom{n-k}{d_2-1} \right),
\end{align*}
which implies that $\text{deg}(\dim \text{gr}(L(\lambda) \otimes L(\mu))_n)=d_2-1$.
Hence
\[
  \operatorname{GKdim}
  \bigl(L(\lambda)\otimes L(\mu)\bigr) =d_2=d_1+d_2=\text{GK}\dim L(\lambda)+\text{GK}\dim L(\mu).
\]
The case $d_2=0<d_1$ follows by interchanging $L(\lambda)$ and
$L(\mu)$.

\end{enumerate}

{\bf Step 2.} Next we prove that 
\begin{align}\label{prop-GK-tensor}
    \text{GK}\dim \widetilde{L}(\lambda|\mu)=\text{GK}\dim L(\lambda)+\text{GK}\dim L(\mu).
\end{align}

When $\frg$ is of type I, recall that Kac module is defined by
$$\mathcal{K}(\lambda|\mu):=U(\frg)\otimes_{U(\frg(0)+\frg(1))}L^0(\lambda|\mu)\cong \wedge(\frg(-1))\otimes L^0(\lambda|\mu),$$
where $L^0(\lambda|\mu)$ is a simple $\frg_0$-module of highest weight $(\lambda|\mu)$. Hence 
\begin{align*}
    \text{GK}\dim \widetilde{L}(\lambda|\mu)&\leq\text{GK}\dim (\wedge(\frg_1^-)\otimes L^0(\lambda|\mu))\\&=\text{GK}\dim L^0(\lambda|\mu)=\text{GK}\dim (L(\lambda) \otimes L(\mu)) \overset{(\ref{GK-tensor})}{=}\text{GK}\dim L(\lambda)+\text{GK}\dim L(\mu)
\end{align*}
since $\wedge(\frg_1^-)$ is finite-dimensional and $L^0(\lambda|\mu)=L(\lambda) \otimes L(\mu)$. In this case the equation (\ref{prop-GK-tensor}) holds since Lemma \ref{GK-dim-submodule}.

Also we can draw the conclusion from (\ref{kac-module-iso}). We have $U(\frg)\otimes_{U(\frp)} L(\lambda|\mu)$ is a factor module of 
 $U(\frg)\otimes_{U(\frp)} M(\lambda|\mu)\cong\widetilde{M}(\lambda|\mu)$. Then $U(\frg)\otimes_{U(\frp)} L(\lambda|\mu)\cong \widetilde{L}(\lambda|\mu)$. In this case $L(\lambda|\mu)= L_{\frg_0}(\lambda|\mu)$. Hence
 \begin{align*}
    \text{GK}\dim \widetilde{L}(\lambda|\mu)&\overset{\text{Prop.}~\ref{GK-dim-induced-mod}}{=}\text{GK}\dim L_{\frg_0}(\lambda|\mu)+\frg_0-\frp_0\\&=\text{GK}\dim (L(\lambda) \otimes L(\mu)) \overset{(\ref{GK-tensor})}{=}\text{GK}\dim L(\lambda)+\text{GK}\dim L(\mu).
\end{align*}




Now we complete the proof of this proposition.

\end{proof}

\begin{Cor}
  Let $\dot{\frg}$ be a finite-dimensional Lie algebra, $M$ be a finitely generated $U(\dot{\frg})$-module and $F$ be a finite-dimensional $U(\dot{\frg})$-module, then $\text{GK}\dim(M\otimes F)=\text{GK}\dim(M)$.
\end{Cor}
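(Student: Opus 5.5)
The plan is to compare growth functions directly with a good generating subspace, as in Step 1 of the proof of Proposition \ref{semisimple-GK-dim}. Let $M_0\subseteq M$ be a finite-dimensional generating subspace and put $M_n=U_n(\dot{\frg})M_0$. Take $M_0\otimes F$ as the generating subspace of $M\otimes F$; it does generate, and this is justified below. Write $F_n=U_n(\dot{\frg})(M_0\otimes F)$. Since the definition of GK dimension does not depend on the choice of generating subspace, it suffices to sandwich $\dim F_n$ between constant multiples of $\dim M_{n'}$ for $n'$ comparable to $n$.

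\emph{Upper bound.} For $x\in\dot{\frg}$, the coproduct gives $x(m\otimes f)=xm\otimes f+m\otimes xf$, and $F$ is stable under $\dot{\frg}$. By induction on $n$ this yields $F_n\subseteq M_n\otimes F$. Hence $\dim F_n\le \dim F\cdot\dim M_n$. Taking $\log/\log n$ and the $\varlimsup$, the constant $\log\dim F/\log n$ tends to $0$, so $\text{GK}\dim(M\otimes F)\le \text{GK}\dim M$.

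\emph{Lower bound.} I expect this to be the main step. Here one needs $M_n\otimes F\subseteq F_{n}$, which also shows that $M_0\otimes F$ generates. I argue by induction on $n$. Assume $M_{n-1}\otimes F\subseteq F_{n-1}$. For $x\in\dot{\frg}$, $m\in M_{n-1}$ and $f\in F$, we have
\[
xm\otimes f=x(m\otimes f)-m\otimes xf .
\]
The first term lies in $xF_{n-1}\subseteq F_n$. The second lies in $M_{n-1}\otimes F\subseteq F_{n-1}\subseteq F_n$. Since $M_n$ is spanned by $M_{n-1}$ and $\dot{\frg}M_{n-1}$, this proves $M_n\otimes F\subseteq F_n$. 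Therefore
\[
\dim F\cdot \dim M_n\le \dim F_n\le \dim F\cdot\dim M_n,
\]
and in particular $\text{GK}\dim(M\otimes F)\ge\text{GK}\dim M$.

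Combining the two bounds gives $\dim F_n=\dim F\cdot \dim M_n$ exactly, so the $\varlimsup$ of $\log\dim F_n/\log n$ equals that of $\log\dim M_n/\log n$, which is the claimed equality. The only subtle point is that $M\otimes F$ is finitely generated. This is exactly what the inclusion $M_n\otimes F\subseteq F_n$ provides: taking the union over $n$ gives $M\otimes F=\bigcup_n F_n$. Hence the GK dimension of $M\otimes F$ is defined via $M_0\otimes F$, and no separate argument is required.
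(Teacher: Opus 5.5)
Your proof is correct, but it takes a different route from the paper. The paper gives no separate argument for this corollary. It is placed right after Proposition~\ref{semisimple-GK-dim} and is meant to follow from the additivity $\text{GK}\dim(L\otimes L')=\text{GK}\dim L+\text{GK}\dim L'$ together with $\text{GK}\dim F=0$.

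That additivity is proved for an \emph{outer} tensor product of a $\frg_0'$-module with a $\frg_0''$-module. There, $U_n(\frg_0'\oplus\frg_0'')(M_0'\otimes M_0'')=\sum_{i+j\le n}L(\lambda)_i\otimes L(\mu)_j$ holds by construction. In the corollary, a single $\dot{\frg}$ acts \emph{diagonally} through the coproduct, so the transfer is not automatic:
\begin{itemize}
\item The upper bound still comes for free, since the coproduct sends $U_n(\dot{\frg})$ into $\sum_{i+j\le n}U_i(\dot{\frg})\otimes U_j(\dot{\frg})$.
\item The lower bound does not. Restricting a $\dot{\frg}\oplus\dot{\frg}$-module to the diagonal subalgebra a priori only bounds the GK dimension from above. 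It also presupposes finite generation over the diagonal.
\end{itemize}

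Your step $xm\otimes f=x(m\otimes f)-m\otimes xf$ supplies exactly what is missing; it uses that $F$ is itself a $\dot{\frg}$-module. It gives the exact identity $U_n(\dot{\frg})(M_0\otimes F)=M_n\otimes F$, hence $\dim F_n=\dim F\cdot\dim M_n$. It also proves that $M\otimes F$ is finitely generated, which the paper takes for granted.

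The paper's route buys brevity and places the result inside a general additivity statement. Yours is self-contained and avoids Hilbert polynomials. It is also rigorous for the diagonal module structure that the statement is actually about.

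One implicit assumption in both: $F\neq 0$, so that $\log\dim F_n$ makes sense.
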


\begin{Cor}
   $\text{GK}\dim \widetilde{M}(\Lambda)= \text{GK}\dim M(\Lambda)=|\Delta_0^+|$.
\end{Cor}
\begin{proof}
  Let $\frg$ be of type I, by the process of Lemma \ref{GK-dim-submodule} we have 
    $$U(\frg)\otimes_{U(\frp)} M(\Lambda)\cong\widetilde{M}(\Lambda).$$
    Then 
    $$\text{GK}\dim \widetilde{M}(\Lambda)\overset{\text{Prop. \ref{GK-dim-induced-mod}}}{=}\text{GK}\dim M(\Lambda)+\dim \frg_0-\dim\frp_0=\text{GK}\dim M(\Lambda)=|\Delta_0^+|.$$
     The last equation holds if we regard $\widetilde{M}(\Lambda)$ as a generalized Verma module, see \cite{EH04}.

     Let $\frg$ be of type II, we have
     \begin{align*}
         \widetilde{M}(\Lambda)=U(\frg)\otimes_{U(\frb)}V_{\Lambda}\cong U(\frn^-)\otimes V_{\Lambda}=S(\frn_0^-)\otimes \wedge(\frn_1^-)\otimes V_{\Lambda}.
     \end{align*}
Then
\begin{align*}
    \text{GK}\dim \widetilde{M}(\Lambda)&=\text{GK}\dim (S(\frn_0^-)\otimes \wedge(\frn_1^-)\otimes V_{\Lambda})\\&=\text{GK}\dim S(\frn_0^-)=\dim \frn_0^-=|\Delta_0^+|.
\end{align*}
\end{proof}

Next we have the following proposition.
\begin{Prop}\label{GKdim_uniform}
     Let $\frg$ be of type I and $\widetilde{L}(\lambda|\mu)$ be a simple highest weight $\frg$-module, then GK dimension of $\widetilde{L}(\lambda|\mu)$ is given by
  $$\text{GK}\dim \widetilde{L}(\lambda|\mu)=|\Delta_0^+|-{\bf a}_{[\Lambda]}(w)\text{~for~}w=(u,v)\in W,$$ 
  where $w = w_{\Lambda}$ is the minimal-length element specified in Lemma \ref{GKdim-uniform1}.
\end{Prop}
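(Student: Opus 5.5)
The plan is to reduce everything to the even part and then apply the Lie algebra formula factor by factor. By Proposition \ref{semisimple-GK-dim}, for $\frg$ of type I we have
\[
\text{GK}\dim \widetilde{L}(\lambda|\mu)=\text{GK}\dim L(\lambda)+\text{GK}\dim L(\mu),
\]
where $L(\lambda)$ and $L(\mu)$ are the simple highest weight modules over the two factors $\frg_0'$ and $\frg_0''$ of $\frg_0$. Any central part of $\frg_0$ (for instance $u(1)$ for $\frsl(m|n)$, or $\frso(2)$ for $\mathfrak{osp}(2|2n)$) acts by scalars on highest weight modules. It contributes neither roots nor growth, so it can be dropped. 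In the case $\mathfrak{osp}(2|2n)$ one factor is abelian, so its Weyl group is trivial and its GK dimension is $0$.

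Next I apply Lemma \ref{GKdim-uniform1} to each simple factor separately. Write $\Lambda=(\lambda|\mu)$. The integral root system splits orthogonally as $\Delta_{[\Lambda]}=\Delta'_{[\lambda]}\sqcup\Delta''_{[\mu]}$, because every even root lies in $\Delta_0'$ or in $\Delta_0''$. Consequently $W_{[\Lambda]}=W'_{[\lambda]}\times W''_{[\mu]}$, the set $I$ splits as $I'\sqcup I''$, and $W^I_{[\Lambda]}=W'^{I'}_{[\lambda]}\times W''^{I''}_{[\mu]}$. The length function is additive on this product. I also need anti-dominance of $\Lambda$ for $\frg_0$ to be equivalent to anti-dominance of $\lambda$ and $\mu$ separately. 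Using $\rho_0=\rho_0'+\rho_0''$, this reduces to checking each coroot condition within its own factor, since $(\rho_0'',\alpha^\vee)=0$ for $\alpha\in\Delta_0'$ and vice versa. It follows that the minimal-length $w_\Lambda$ with $w_\Lambda^{-1}\Lambda$ anti-dominant is exactly $w=(u,v)$, where $u=w_\lambda$ and $v=w_\mu$ are the minimal-length elements attached to $\lambda$ and $\mu$ in Lemma \ref{GKdim-uniform1}.

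Lemma \ref{GKdim-uniform1} then gives
\[
\text{GK}\dim L(\lambda)=|\Delta_0'^+|-{\bf a}_{[\lambda]}(u),\qquad \text{GK}\dim L(\mu)=|\Delta_0''^+|-{\bf a}_{[\mu]}(v).
\]
Adding these, and using $|\Delta_0^+|=|\Delta_0'^+|+|\Delta_0''^+|$ together with Proposition \ref{a_function_property}(\ref{a-7}) applied to the Coxeter group $W_{[\Lambda]}=W'_{[\lambda]}\times W''_{[\mu]}$, yields ${\bf a}_{[\Lambda]}(w)={\bf a}_{[\lambda]}(u)+{\bf a}_{[\mu]}(v)$. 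This is the claimed formula.

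The main point to verify carefully is the compatibility of the minimal-length element with the product decomposition. This includes checking that $W_{[\Lambda]}$ really is the Coxeter group generated by reflections in $\Delta_{[\Lambda]}$ and decomposes as a direct product, and that the normalization by $\rho_0$ (rather than $\rho$) is the one to use in anti-dominance. That normalization is justified because $\rho-\rho_0$ is orthogonal to all even roots, so it does not affect any of the even coroot pairings.
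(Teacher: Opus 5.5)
Your proposal is correct and follows essentially the same route as the paper, which proves the statement in one line by combining Proposition \ref{semisimple-GK-dim}, Lemma \ref{GKdim-uniform1} applied to each factor of $\frg_0$, and the additivity of the ${\bf a}$-function on direct products from Proposition \ref{a_function_property}\,\eqref{a-7}. Your proposal spells out what that one-line proof leaves implicit: the splitting $W_{[\Lambda]}=W'_{[\lambda]}\times W''_{[\mu]}$ (with $I=I'\sqcup I''$ and additive length), which identifies $w_\Lambda$ as $(w_\lambda,w_\mu)$; the harmlessness of the central or abelian summands; and the fact that in type I the shift by $\rho$ versus $\rho_0$ does not change any even coroot pairing.
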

\begin{proof}
    This is a consequence of  Proposition \ref{a_function_property},   Lemma \ref{GKdim-uniform1} and Proposition \ref{semisimple-GK-dim}.
\end{proof}

The above proposition reduces the computation of GK dimension to evaluating Lusztig’s {\bf a}-function, which will be addressed in Section \ref{section-a-fun}. Thus, the remaining task is to compute the value of {\bf a}-function explicitly for each type of basic classical Lie superalgebra. Section \ref{section-a-fun} is devoted to deriving these combinatorial formulas.

\section{Formulas for Lusztig's {\bf a}-functions}\label{section-a-fun}
In this section we provide explicit combinatorial expressions for Lusztig’s {\bf a}-functions, which are the key to the GK dimension formula. 
\subsection{Computation of ${\bf a}_{[\Lambda]}(w_{\Lambda})$ for $\mathfrak{sl}(m|n)$ and $\mathfrak{osp}(k|2n)$}
In this section we give the formulas for Lusztig's {\bf a}-functions in special linear Lie superalgebra and orthosymplectic Lie superalgebras.


\begin{Def}\label{F_X}
    For $ x \in \mathrm{Seq}_n (\Gamma)$, define
    \begin{align*}
&F_A(x):=\sum_{i\geq 1}(i-1){\bf p}_i,\\
        &F_B(x):=\sum_{i\geq 1}(i-1){\bf p}_i^{\odd},\\
        &F_D(x):=\sum_{i\geq 1}(i-1){\bf p}_i^{\ev}.
    \end{align*}
\end{Def}

Then we have the following lemma.

\begin{Lem}\label{F-A}
    For $ x \in \mathrm{Seq}_n (\Gamma)$, we have $F_A(x)=A(Y)$, where $A(Y)$ is defined in Section \ref{A(Y)}.
\end{Lem}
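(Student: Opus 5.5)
The plan is a double count of pairs of boxes lying in the same column of $Y=Y(x)$, with shape ${\bf p}(x)=[{\bf p}_1,{\bf p}_2,\dots]$ and dual partition ${\bf q}(x)=[{\bf q}_1,{\bf q}_2,\dots]$. Both sides of the claimed identity depend only on the shape of $Y$. The insertion algorithm itself plays no role beyond producing this shape.

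First rewrite $A(Y)$ column by column. Since
$$\frac{{\bf q}_l({\bf q}_l-1)}{2}=\sum_{k=1}^{{\bf q}_l}(k-1),$$
the $l$-th column contributes $k-1$ for the box in row $k$. Equivalently, it contributes the number of boxes strictly above that box in the same column.

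Next regroup by rows. A box $(k,l)$ of $Y$ lies in column $l$ precisely when $1\le l\le {\bf p}_k$. Summing the per-box contribution $k-1$ over all boxes gives
$$A(Y)=\sum_{l\geq1}\sum_{k=1}^{{\bf q}_l}(k-1)=\sum_{k\geq1}\sum_{l=1}^{{\bf p}_k}(k-1)=\sum_{k\geq 1}(k-1){\bf p}_k=F_A(x).$$
The interchange of summations is valid because both double sums run over the same finite set of boxes $\{(k,l)\in Y\}$. Here ${\bf q}_l$ counts the boxes in column $l$ and ${\bf p}_k$ counts those in row $k$, by the definition of the dual partition.

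The only point needing care is the bookkeeping that $\{(k,l): 1\le k\le {\bf q}_l\}=\{(k,l): 1\le l\le {\bf p}_k\}$. This holds because a Young diagram is left- and top-justified. I expect no real obstacle.
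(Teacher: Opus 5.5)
Your proof is correct and is essentially the paper's argument: the paper writes $\sum_{i\geq1}(i-1){\bf p}_i$ as a sum of $(i-1)$ over all boxes, swaps the order of summation from rows to columns, and evaluates $\sum_{i=1}^{{\bf q}_j}(i-1)={\bf q}_j({\bf q}_j-1)/2$. Your justification of the interchange via the common finite index set of boxes is exactly the implicit step in that one-line computation.
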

\begin{proof}
    $\sum\limits_{i\geq 1}(i-1){\bf p}_i=\sum\limits_{i\geq 1}\sum\limits_{j= 1}^{{\bf p}_i}(i-1)=\sum\limits_{j\geq 1}\sum\limits_{i= 1}^{{\bf q}_j}(i-1)=\sum\limits_{j\geq 1}\dfrac{{\bf q}_j({\bf q}_j-1)}{2}=A(Y).$
\end{proof}

\begin{Lem}{\cite[Prop. 3.3, Prop. 3.6]{BXX}}\label{F-B-D}
   We have
    \[
{\bf a}(w) = 
\begin{cases} 
F_B({}^-w) & \text{~if~} w \in W_B, \\
F_D({}^-w) & \text{~if~}  w \in W_D.
\end{cases}
\]
\end{Lem}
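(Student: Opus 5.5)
The statement is cited from \cite[Prop. 3.3, Prop. 3.6]{BXX}, so the plan is to reconstruct the argument of Bai--Xiao--Xie rather than find a new one. The general idea is to compute ${\bf a}(w)$ by identifying the two-sided cell of $w$ and then reading off Lusztig's ${\bf a}$-value on that cell. The first step is to identify that cell through a combinatorial invariant of the signed sequence ${}^-w$. Embed $W_B$ (resp.\ $W_D$) into $\mathfrak{S}_{2n}$ as the centralizer of the involution $i\mapsto -i$, so that $w\in W_B$ corresponds to the symmetric sequence ${}^-w=(-w(n),\dots,-w(1),w(1),\dots,w(n))$. Then apply the domino/RS insertion in the form of Garfinkle, as used by Barbasch--Vogan and McGovern. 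Under this correspondence, the left cells of $W_{B_n}$ (equal parameters) and $W_{D_n}$ are parametrized by the shapes of $Y({}^-w)$ after a suitable ``moving through open cycles''. Two-sided cells correspond to special symbols, equivalently to special nilpotent orbits in the dual group.

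The second step is to invoke Lusztig's result that the ${\bf a}$-value of a two-sided cell equals the $b$-invariant (the ${\bf a}$-invariant of the special representation) attached to the cell. For classical groups, Lusztig's symbol formula equals $\dim\mathcal{B}_u$ for the corresponding special unipotent class. Writing this in terms of the partition ${\bf p}$ of $2n$ (type $C$ dual orbit) or $2n$ (type $D$), the Springer fiber dimension is a quadratic expression in the parts. Using \eqref{eq:ev-od}, I would check that this expression equals $\sum_i (i-1){\bf p}_i^{\odd}$ for type $B$ and $\sum_i(i-1){\bf p}_i^{\ev}$ for type $D$. The reason is that counting odd/even boxes row by row is exactly the halving of $\sum (i-1){\bf p}_i$ induced by the $\pm$ symmetry, with the parity convention fixing the box on the diagonal.

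The main obstacle is the first step. The RS shape of ${}^-w$ need not itself be special, so one must show that replacing $Y({}^-w)$ by its special representative (by moving open cycles) leaves $F_B$ resp.\ $F_D$ unchanged. I would check this by noting that an open-cycle move only swaps the box ending a cycle between two rows in a way that preserves the count of odd (resp.\ even) boxes weighted by row index. I would verify this with the same case analysis on corner boxes used in \cite{BXX}, treating type $D$ separately because the parity convention changes there. A sanity check on $w=e$ (single row, value $0$) and on $w_0$ (single column, recovering $n^2$ resp.\ $n^2-n$) confirms the normalization.
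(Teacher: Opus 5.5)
The paper gives no argument of its own here; it simply imports the result from Bai--Xiao--Xie. Your outline is the standard argument behind that citation:
\begin{itemize}
\item the shape of $Y({}^-w)$ equals that of the domino tableau of $w$, which is a theorem of van Leeuwen that you should cite rather than conflate RS and domino insertion;
\item the two-sided cell is determined by moving through open cycles to a special shape, on which ${\bf a}=b_E=\dim\mathcal{B}_u$ agrees with $F_B$, resp.\ $F_D$;
\item $F_B$, resp.\ $F_D$, is unchanged by these moves, because open cycles only transfer even boxes for empty-core type $B/C$ tableaux, resp.\ odd boxes in type $D$.
\end{itemize}

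Do make the Coxeter structure explicit: the generators are $(i,i+1)$ together with $(-1,2,\dots,n)$, resp.\ $(-2,-1,3,\dots,n)$. With simple roots $\epsilon_i-\epsilon_{i+1},\epsilon_n$ the identity fails: in $W_{B_3}$, $w=(-2,-1,3)=w_0s_{\epsilon_1-\epsilon_2}s_{\epsilon_3}$ has ${\bf a}(w)=3$, while ${\bf p}({}^-w)=[4,2]$ gives $F_B=1$. Also correct your $w_0$ check for $D_n$ with $n$ odd: there $w_0=(1,-2,\dots,-n)\neq-1$ and ${\bf p}({}^-w_0)=[2,1^{2n-2}]$, not a single column, though it still gives $F_D=n^2-n$.
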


\begin{Thm}\label{a-function-gl(m|n)-osp}
For $w=(u,v)\in W_0$, we have
\[
{\bf a}(w) = 
\begin{cases} 
\sum_{i\geq 1}(i-1){\bf p}(u)_i+\sum_{j\geq 1}(j-1){\bf p}(v)_j & \text{~if~} \frg=\frsl(m|n), \\
\sum_{i\geq 1}(i-1){\bf p}({}^-u)_i^{\odd}+\sum_{j\geq 1}(j-1){\bf p}({}^-v)_j^{\odd} & \text{~if~} \frg=\mathfrak{osp}(2m+1|2n), \\
\sum_{i\geq 1}(i-1){\bf p}({}^-u)_i^{\ev}+\sum_{j\geq 1}(j-1){\bf p}({}^-v)_j^{\odd} & \text{~if~} \frg=\mathfrak{osp}(2m|2n)\text{~for~}m\geq2, \\
\sum_{j\geq 1}(j-1){\bf p}({}^-v)_j^{\odd} & \text{~if~} \frg=\mathfrak{osp}(2|2n).
\end{cases}
\]
\end{Thm}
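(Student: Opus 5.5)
The plan is to reduce everything to the product decomposition of the even Weyl group and then quote the known single-factor formulas. By Proposition \ref{a_function_property}\eqref{a-7}, whenever $W_0=W_1\times W_2$ is a direct product of Coxeter groups and $w=(u,v)$, we have ${\bf a}(w)={\bf a}(u)+{\bf a}(v)$. So the first step is to identify $W_0$ in each case, together with its Coxeter generators. From the description of $\frg_0$:
\begin{itemize}
\item for $\frsl(m|n)$, $\frg_0=\frsl(m)\oplus\frsl(n)\oplus\mathfrak{u}(1)$, so $W_0\cong\mathfrak{S}_m\times\mathfrak{S}_n$;
\item for $\mathfrak{osp}(2m+1|2n)$, $\frg_0=\frso(2m+1)\oplus\frsp(2n)$, so $W_0\cong W_B\times W_C$;
\item for $\mathfrak{osp}(2m|2n)$ with $m\geq 2$, $\frg_0=\frso(2m)\oplus\frsp(2n)$, so $W_0\cong W_D\times W_C$;
\item for $\mathfrak{osp}(2|2n)$, $W_0\cong\{e\}\times W_C$.
\end{itemize}
In each case the simple reflections of $W_0$ are the disjoint union of those of the two factors, which commute, so the product is a genuine Coxeter product and additivity applies.

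The second step is to evaluate each factor.

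For type $A$, I would invoke the classical result (Lusztig, and as used in \cite{BX1}) that for $u\in\mathfrak{S}_m$ with RS tableau of shape ${\bf p}(u)$, the value ${\bf a}(u)$ is the ${\bf a}$-value of the two-sided cell indexed by ${\bf p}(u)$. That value is $\sum_{j}\binom{{\bf q}_j}{2}=\sum_i(i-1){\bf p}(u)_i$, by Lemma \ref{F-A}. This gives the $\frsl(m|n)$ line.

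For types $B$ and $D$, Lemma \ref{F-B-D} gives ${\bf a}(u)=F_B({}^-u)$ and ${\bf a}(u)=F_D({}^-u)$ respectively. For type $C$, note that $W_C$ and $W_B$ are the same Coxeter group, with identical Coxeter matrix. The ${\bf a}$-function depends only on the Hecke algebra, hence only on $(W,S)$ with equal parameters. Therefore ${\bf a}(v)=F_B({}^-v)=\sum_j(j-1){\bf p}({}^-v)^{\odd}_j$. This takes care of the $\frsp(2n)$ factor in all three $\mathfrak{osp}$ lines.

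Finally, for $\mathfrak{osp}(2|2n)$ the first factor is trivial, and ${\bf a}(e)=0$ by Proposition \ref{a_function_property}(2).

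The main obstacle is bookkeeping of conventions rather than any new idea. One must check that the signed-permutation one-line notation for $u,v$, and the resulting sequences ${}^-u,{}^-v$, match the conventions under which \cite{BXX} proves Lemma \ref{F-B-D}, including which end of the Dynkin diagram carries the short or long root. One must also confirm that identifying $W_C$ with $W_B$ is compatible with the action $w(\delta_i)=\pm\delta_{w(i)}$ used here. I would verify this on small cases such as $n=2$ (dihedral of order $8$), checking a simple reflection and the longest element against Proposition \ref{a_function_property}(2) and (4), as in Example \ref{a-w}. I would also note that for $D$ with $m=1$ the factor $\frso(2)$ is abelian, which is why that case is separated out.
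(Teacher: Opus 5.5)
Your proposal is correct and matches the paper's proof. Both use additivity of ${\bf a}$ over the product decomposition of $W_0$ (Proposition \ref{a_function_property}\eqref{a-7}), then the type $A$ value via Lemma \ref{F-A}, Lemma \ref{F-B-D} for the orthogonal and symplectic factors, and ${\bf a}(e)=0$ for $\mathfrak{osp}(2|2n)$. You also make explicit two points the paper leaves implicit: Lemma \ref{F-A} is only a combinatorial identity, so the type $A$ cell value must come from the classical theory, and Lemma \ref{F-B-D} applies to the $\frsp(2n)$ factor because $W_C=W_B$ as Coxeter systems.
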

\begin{proof}
We prove the theorem in a case-by-case way.

    For $\frg=\frsl(m|n)$, we have ${\bf a}(w)={\bf a}(u)+{\bf a}(v)$ by Proposition \ref{a_function_property} \eqref{a-7}. By Lemma \ref{F-A} we have ${\bf a}(u)=\sum_{i\geq 1}\frac{{\bf q}_i({\bf q}_i-1)}{2}$, similarly for ${\bf a}(v)$. 

  For $\frg=\mathfrak{osp}(2m+1|2n)$ or $\mathfrak{osp}(2m|2n)$, it follows from Proposition \ref{a_function_property} and Lemma \ref{F-B-D}.  

  For $\frg=\mathfrak{osp}(2|2n)$, it follows from ${\bf a}(e)=0$ and Proposition \ref{a_function_property} and Lemma \ref{F-B-D}. Then the theorem follows.
\end{proof}

\begin{Rem}
    Also we can compute the value of {\bf a}$(w)$ by PyCox if $w$ is given in one-line notation $(w(1),w(2),\cdots)$. First we need to convert one-line notation to the product of simple reflections. We achieve this conversion through a simple program. The program takes Coxeter type and $w$ as  the input and returns an expression as a product of simple reflections. The program is implemented in Python and available at
    \begin{center}
\textcolor{magenta}{https://github.com/JingJiang-web/convert-oneline-notation-to-simple-reflection}.
    \end{center}
\end{Rem}

\begin{Exam}
     Let $\frg=\mathfrak{osp}(11|8)$ and consider $w=(\underbrace{3,4,-1,5,2}_u~ |~\underbrace{3,-1, -4, 2}_v)$ from Example \ref{a-w}, then by the program we have $w=[\underbrace{[2,3,4,0,1,2]}_u|\underbrace{[2,3,2,1,0,1,2,0,1]}_v]$ (Geck's notation, see Section \ref{PyCox} for more details), therefore by PyCox we have {\bf a}$(u)=2$, similarly {\bf a}$(v)=5$, hence {\bf a}$(w)={\bf a}(u)+{\bf a}(v) =7$.
\end{Exam}

The above combinatorial formulas cover all basic classical Lie superalgebras except the exceptional type $G(3)$. We next address this remaining case, where the computation of ${\bf a}_{[\Lambda]}(w_{\Lambda})$ requires a different approach.

\subsection{Computation of ${\bf a}_{[\Lambda]}(w_{\Lambda})$ for $G(3)$}

Drawing on the findings presented in \cite{BX1,BXX}, it has been established that the calculation of the GK dimension for a highest weight module \( L(\lambda) \) (where \( \lambda \) denotes the infinitesimal character) can be reduced to determining the value of Lusztig’s {\bf a}-function associated with a specific Weyl group element \( w_\lambda \).  In the case of classical Lie algebras, the RS insertion algorithm enables direct computation of \( {\bf a}(w_\lambda) \), eliminating the need to explicitly identify the Weyl group element \( w_\lambda \) itself. By contrast, computing \( {\bf a}(w_\lambda) \) becomes substantially more intricate when dealing with exceptional Lie algebras. In view of this, \cite{BGXW} gave an efficient algorithm to compute the value of {\bf a}$(w_{\lambda})$ for exceptional Lie algebra $G_2$, which will help us
with computing the GK dimensions of $\widetilde{L}(\Lambda)$ and we recall the notations here.

\subsubsection{The integral case}
For a finite-dimensional complex simple Lie algebra $\dot{\mathfrak{g}}$ with a fixed Cartan subalgebra $\mathfrak{h}$, recall that $(\cdot, \cdot)$ denotes the canonical pairing between  $\mathfrak{h}$ and its dual $\mathfrak{h}^*$. Let $\Pi=\{\alpha_i \mid  1\leq i\leq n\}$ be the set of simple roots of $\dot{\mathfrak{g}}$ with corresponding fundamental weights $\{\omega_i\mid 1\leq i\leq n\}$. 
Recall that the Cartan matrix is defined by
$A=(A_{ij})_{n\times n}$ with $A_{ij}=(\alpha_i, \alpha_j^\vee )$.


\begin{Lem}{\cite[Lem. 3.1]{BGXW}}\label{rho}
   We have $\rho'=\sum\limits_{k=1}^n \omega_k$ and 
   $\alpha_j=\sum\limits_{k=1}^n A_{jk}\omega_k$. For any $\lambda\in \mathfrak{h}^*$, we write $\lambda=\sum\limits_{1\leq i\leq n}k_i\omega_i$, where $k_i=( \lambda, \alpha_i^{\vee})$. Then $\lambda$ is an integral weight if and only if $k_i\in \mathbb{Z}$ for all $1\leq i\leq n$. Also, $\lambda$ is an integral and anti-dominant weight if and only if   $k_i\in \mathbb{Z}_{\leq 0}$ for  all $1\leq i\leq n$.
\end{Lem}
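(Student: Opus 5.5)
The plan is to verify each assertion of Lemma \ref{rho} directly from the definitions: the fundamental weights are the dual basis to the simple coroots, that is, $(\omega_k,\alpha_i^\vee)=\delta_{ki}$. Everything then comes down to expanding weights in the basis $\{\omega_k\}$ and pairing with coroots. Throughout, $\rho'$ denotes the half sum of positive roots of $\dot{\frg}$. Integrality and anti-dominance of $\lambda$ are read in the sense used for Lemma \ref{GKdim-uniform1}: $\lambda$ is already the $\rho$-shifted weight, so we require $(\lambda,\alpha^\vee)\in\mathbb{Z}$, respectively $(\lambda,\alpha^\vee)\in\mathbb{Z}_{\le 0}$, for all $\alpha\in\Phi$, respectively all $\alpha\in\Phi^+$.

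First, the expansion. Since $\{\omega_k\}$ is a basis of $\frh^*$ dual to $\{\alpha_k^\vee\}$, any $\lambda\in\frh^*$ can be written as $\lambda=\sum_k c_k\omega_k$. Pairing with $\alpha_i^\vee$ gives $c_i=(\lambda,\alpha_i^\vee)=k_i$. Applying this to $\lambda=\alpha_j$ gives $\alpha_j=\sum_k(\alpha_j,\alpha_k^\vee)\omega_k=\sum_k A_{jk}\omega_k$. For $\rho'$, I will use the standard fact that the simple reflection $s_i$ permutes $\Phi^+\setminus\{\alpha_i\}$ and sends $\alpha_i$ to $-\alpha_i$. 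Hence $s_i\rho'=\rho'-\alpha_i$. On the other hand $s_i\rho'=\rho'-(\rho',\alpha_i^\vee)\alpha_i$, so $(\rho',\alpha_i^\vee)=1$ for every $i$, and therefore $\rho'=\sum_k\omega_k$.

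Next, integrality. One direction is immediate: if $(\lambda,\alpha^\vee)\in\mathbb{Z}$ for all roots, then in particular each $k_i\in\mathbb{Z}$. For the converse, I will use that every coroot $\alpha^\vee$ is an integral combination of the simple coroots $\alpha_i^\vee$, since the coroots form a root system with simple system $\{\alpha_i^\vee\}$. Then $(\lambda,\alpha^\vee)$ is an integral combination of the $k_i$, hence an integer.

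Finally, anti-dominance. Again one direction is trivial: if $(\lambda,\alpha^\vee)\le 0$ for all positive roots, then each $k_i\le 0$. For the converse, every positive coroot $\alpha^\vee$ with $\alpha\in\Phi^+$ is a nonnegative integral combination of the simple coroots. Consequently $(\lambda,\alpha^\vee)$ is a nonnegative integral combination of the $k_i\le 0$, so it is $\le 0$. Together with the integrality statement, this gives the last claim.

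The only step with any substance is the fact that positive coroots are nonnegative integral combinations of simple coroots. I would justify it by noting that $\alpha^\vee=2\alpha/(\alpha,\alpha)$ defines the dual root system, whose simple roots are the $\alpha_i^\vee$ and whose positive roots are the $\alpha^\vee$ with $\alpha\in\Phi^+$.
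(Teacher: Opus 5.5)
Your argument is correct and complete. The dual-basis expansion gives $\alpha_j=\sum_k A_{jk}\omega_k$, the identity $s_i\rho'=\rho'-\alpha_i$ gives $(\rho',\alpha_i^\vee)=1$, and writing positive coroots as nonnegative integral combinations of the $\alpha_i^\vee$ yields both equivalences. This uses the unshifted reading of anti-dominance, which you rightly adopt: with the $\rho$-shifted convention stated in Section 2, the last condition would instead be $k_i\in\mathbb{Z}_{\le -1}$.

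The paper gives no proof of its own here and simply quotes \cite[Lem. 3.1]{BGXW}; your verification is the standard argument that this citation stands in for.
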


Now we want to find the aforementioned  $w_{\lambda}$ for an integral weight $\lambda$. When $\lambda$ is anti-dominant, $w_{\lambda}=\mathrm{id}$. Suppose $\lambda$ is not anti-dominant, then there are some $k_i\in \mathbb{Z}_{>0}$. Suppose the largest index is $i_{\lambda}={i_1}$ such that $k_{i_1}\in \mathbb{Z}_{>0}$. Then we have $$s_{i_1}\lambda=\lambda-k_{i_1}\alpha_{i_1}=\lambda-k_{i_1}\sum\limits_{j=1}^n A_{i_1j}\omega_j=\lambda-2k_{i_1}\omega_{i_1}-k_{i_1}\sum\limits_{j=1,j\neq i_1}^n A_{i_1j}\omega_j.$$
 Then the coefficient of $\omega_{i_1}$ in $s_{i_1}\lambda$ becomes $-k_{i_1}$. We continue this process until all the coefficients of $\omega_{i}$ are in $\mathbb{Z}_{\leq 0}$. This process will stop after finite steps. Multiplying all the $s_i$ appeared in this process will give us the desired $w_{\lambda}$.

We call the above process the \emph{positive index reduction} algorithm.  In fact, we have the following result.

\begin{Lem}{\cite[Lem. 3.2]{BGXW}}\label{find-w-lambda}
    For any integral weight $\lambda$ (regular or singular),  we can get an anti-dominant weight $\mu$ by applying the positive index reduction algorithm. We multiply all the $s_{i_j}, 1\leq j \leq k$ that appeared in this process and get a $w_{\lambda}:=s_{i_1} s_{i_2} \cdots s_{i_k}$. Then this $w_{\lambda}$ has the minimal length in $W$ such that $w_{\lambda}^{-1}\lambda$ is anti-dominant.
\end{Lem}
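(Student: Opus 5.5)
The statement to prove is Lemma \ref{find-w-lambda}. I would prove it in three steps: the positive index reduction algorithm terminates, each step increases length by exactly one, and the resulting $w_\lambda$ is the minimal element of the relevant coset.

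\textbf{Termination and length growth.} Write $\lambda_0=\lambda$ and $\lambda_j=s_{i_j}\lambda_{j-1}$. At step $j$ we have $(\lambda_{j-1},\alpha_{i_j}^\vee)=k_{i_j}>0$. Hence $\lambda_j=\lambda_{j-1}-k_{i_j}\alpha_{i_j}$ is strictly lower than $\lambda_{j-1}$ in the dominance order. All $\lambda_j$ lie in the finite orbit $W\lambda$, so no weight can repeat, and the process stops after finitely many steps. At that point every coefficient is $\le 0$, so the final weight $\mu$ is anti-dominant by Lemma \ref{rho}.

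For the length claim, I use the standard criterion: if $(\nu,\alpha_s^\vee)>0$ and $\nu=x\mu$ with $\mu$ anti-dominant, then $\ell(s x)=\ell(x)+1$. Write $x_j:=s_{i_j}s_{i_{j+1}}\cdots s_{i_k}$, so that $\lambda_{j-1}=x_j\mu$. I argue by downward induction on $j$.

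\begin{itemize}
\item Suppose $\ell(s_{i_j}x_{j+1})<\ell(x_{j+1})$. Then $x_{j+1}^{-1}\alpha_{i_j}$ is a negative root.
\item Since $\mu$ is anti-dominant, $(\mu,x_{j+1}^{-1}\alpha_{i_j}^\vee)\ge 0$.
\item But this pairing equals $(\lambda_j,\alpha_{i_j}^\vee)=-k_{i_j}<0$, a contradiction.
\end{itemize}

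So each step raises length by one, and $\ell(w_\lambda)=k$.

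\textbf{Minimality.} Let $I=\{\alpha_i : (\mu,\alpha_i^\vee)=0\}$, so that $W_I$ is the stabilizer of $\mu$. The set of $w$ with $w^{-1}\lambda$ anti-dominant is exactly $w_\lambda W_I$, because the anti-dominant representative of an orbit is unique. It therefore suffices to show $w_\lambda\in W^I$, i.e. $w_\lambda\alpha>0$ for every $\alpha\in I$.

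Take $\alpha\in I$ and suppose $w_\lambda\alpha<0$. Choose $j$ with $x_{j+1}\alpha>0$ and $x_j\alpha<0$. A simple reflection permutes the positive roots other than its own, so this forces $x_{j+1}\alpha=\alpha_{i_j}$. Then
\[
(\lambda_j,\alpha_{i_j}^\vee)=(x_{j+1}\mu,(x_{j+1}\alpha)^\vee)=(\mu,\alpha^\vee)=0 .
\]
This contradicts $(\lambda_j,\alpha_{i_j}^\vee)=-k_{i_j}\neq 0$. Hence $w_\lambda$ is the unique minimal-length element of its coset, which proves the lemma.

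The main obstacle is this last minimality step, in particular handling singular $\lambda$ correctly. The ordering rule (always taking the largest positive index) plays no role in the argument; any choice of positive index gives the same result.
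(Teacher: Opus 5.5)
Your proof is correct. The paper itself gives no argument for this lemma. It is imported from \cite[Lem.~3.2]{BGXW}, and the remark that any positive index may be chosen at each step is credited separately to \cite[Lem.~3.1]{Dong23}. Your argument is a self-contained replacement for both citations, and the main steps are sound:
\begin{itemize}
\item Termination follows from strict descent in the dominance order inside the finite orbit $W\lambda$.
\item The set $\{w : w^{-1}\lambda \text{ anti-dominant}\}$ is the coset $w_\lambda W_I$. This uses two standard facts: the anti-dominant point of an orbit is unique, and the stabilizer of an anti-dominant $\mu$ is $W_I$.
\item The crossing argument shows $w_\lambda\in W^I$. A $j$ with $x_{j+1}\alpha>0>x_j\alpha$ forces $x_{j+1}\alpha=\alpha_{i_j}$, hence $(\lambda_j,\alpha_{i_j}^\vee)=(\mu,\alpha^\vee)=0\neq -k_{i_j}$.
\end{itemize}
Your argument buys more than the citation. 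It never uses which positive index was chosen, and a coset has a unique minimal-length element, so every run of the algorithm outputs the same group element. That is exactly the Dong remark. Also, membership $w_\lambda\in W^I$ is precisely the form in which $w_\lambda$ is used in Lemma \ref{GKdim-uniform1}.

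Two small corrections.

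First, the ``standard criterion'' as you state it has the sign reversed. If $\nu=x\mu$ with $\mu$ anti-dominant and $(\nu,\alpha_s^\vee)>0$, then $x^{-1}\alpha_s<0$, so $\ell(sx)=\ell(x)-1$. What your bullets actually prove, and what you need, is the version with $(\nu,\alpha_s^\vee)<0$, applied to $x=x_{j+1}$ and $\nu=\lambda_j$. The bullets themselves are correct, so only the displayed criterion needs fixing. No induction on $j$ is needed there either, since each step is handled on its own.

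Second, the computation $\ell(w_\lambda)=k$ is not needed for the lemma as stated, because minimality already follows from $w_\lambda\in W^I$. It is still a worthwhile extra: it shows that the word $s_{i_1}\cdots s_{i_k}$ produced by the algorithm is reduced.
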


    From \cite[Lem. 3.1]{Dong23},  we can choose any positive index (whose corresponding coefficient is positive) during the positive index reduction algorithm. 
The following web page in \cite{BGXW} is convenient to apply the algorithm:
    \begin{center}
\textcolor{magenta}{http://test.slashblade.top:5000/lie/antidominant}.
    \end{center}
We write $\lambda=\sum\limits_{1\leq i\leq n}k_i\omega_i:=[k_1,\cdots,k_{n}]$, where $k_i=( \lambda, \alpha_i^{\vee})$. Here  the order of simple roots is the same as that in \cite{Geck}. In the above web page, we only need to input $``k_1,\cdots,k_{n}"$, and the corresponding $w_{\lambda}$ will appear as the output.

\begin{Prop}
  Keep the notations as in section \ref{notation-G3}.   Let $\Lambda=b'_1\omega'_1+b_1\omega_1+b_2\omega_2$ be an integral weight of $G(3)$, then ${\bf a}(w_{\Lambda})={\bf a}(w_{b'_1})+{\bf a}(w_{(b_1,b_2)})$.
\end{Prop}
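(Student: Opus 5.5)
The plan is to reduce the statement to the product decomposition of the Weyl group $W=\mathfrak{S}_2\times W_G$ and then apply Proposition \ref{a_function_property}\eqref{a-7}. Write $\Lambda=b'_1\omega'_1+b_1\omega_1+b_2\omega_2$ with $b'_1=(\Lambda,(2\delta)^\vee)$ and $b_i=(\Lambda,\alpha_i^\vee)$, as in Lemma \ref{rho}. The first step is to check that the pairings split between the two factors. Since $(\delta,\epsilon_i)=0$, we have $(\omega'_1,\alpha_i^\vee)=0$ for $i=1,2$ and $(\omega_j,(2\delta)^\vee)=0$ for $j=1,2$. So the coordinates of $\Lambda$ relative to the simple system $\{2\delta,\alpha_1,\alpha_2\}$ of $\Delta_0$ are exactly $[b'_1;b_1,b_2]$. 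Moreover, the $A_1$-coordinate depends only on $b'_1$ and the $G_2$-coordinates depend only on $(b_1,b_2)$.

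The second step is to run the positive index reduction algorithm of Lemma \ref{find-w-lambda} for $\Lambda$ on the rank-three system $A_1\times G_2$. The Cartan matrix is block diagonal, so $s_0$ changes only the $\omega'_1$-coefficient, and $s_1,s_2$ change only the $\omega_1,\omega_2$-coefficients. By \cite[Lem. 3.1]{Dong23} we may choose the positive indices in any order. I will first treat the $A_1$ coordinate: apply $s_0$ if and only if $b'_1>0$, which yields $w_{b'_1}\in\mathfrak{S}_2$. Then I will run the $G_2$ algorithm on $(b_1,b_2)$, which yields $w_{(b_1,b_2)}\in W_G$. This gives $w_\Lambda=w_{b'_1}w_{(b_1,b_2)}=(w_{b'_1},w_{(b_1,b_2)})\in\mathfrak{S}_2\times W_G$.

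Minimality also splits. The length on a direct product of Coxeter groups is additive, and $w^{-1}\Lambda$ is anti-dominant if and only if both components are anti-dominant on their respective factors. Hence the minimal-length element is the pair of the componentwise minimal-length elements, consistent with Lemma \ref{find-w-lambda}.

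Finally, $W$ is the direct product of the Coxeter subgroups $\mathfrak{S}_2=\langle s_0\rangle$ and $W_G=\langle s_1,s_2\rangle$, whose generators commute. Proposition \ref{a_function_property}\eqref{a-7} then gives ${\bf a}(w_\Lambda)={\bf a}(w_{b'_1})+{\bf a}(w_{(b_1,b_2)})$. Here ${\bf a}(w_{b'_1})$ equals $0$ or $1$ according to whether $b'_1\le 0$ or $b'_1>0$, by Proposition \ref{a_function_property}(2).

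The main point needing care is the first step: verifying with the stated bilinear form that $(2\delta)^\vee$ kills $\omega_1,\omega_2$ and that $\alpha_i^\vee$ kill $\omega'_1$. This amounts to the orthogonality $(\delta,\epsilon_i)=0$, together with the normalization $\omega'_1=\delta$ with $(\delta,(2\delta)^\vee)=1$. Once this orthogonality is established, everything else is formal.
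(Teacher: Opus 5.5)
Your proposal is correct and follows the same approach as the paper, which proves the statement simply by applying the product property (7) of Proposition \ref{a_function_property} to $W=\mathfrak{S}_2\times W_G$. Your checks that the pairings separate across the two factors and that $w_\Lambda=(w_{b'_1},w_{(b_1,b_2)})$ make explicit a step the paper leaves implicit.
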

\begin{proof}
    This is a direct corollary of Proposition \ref{a_function_property} \eqref{a-7}.
\end{proof}

 \begin{Exam}\label{exam-G(3)}
     Let $\Lambda^{\rho_0}:=\Lambda+\rho_0=(1|\frac{1}{2},\frac{3}{2},-\frac{3}{2})$ be an integral weight of type $G(3)$, then we can write $\Lambda^{\rho_0}=\omega'_1+\omega_1+\omega_2:=[1|1,1]$. By the above web page we have $w_{\Lambda}=[[0]|\underbrace{[1,0,1,0,1,0]}_{w_\mu}]$. By using PyCox, we have {\bf a}$(w_{\Lambda})=1+6=7$.
 \end{Exam}

\subsubsection{The non-integral case}
Let $\Lambda$ be a non-integral weight of $G(3)$, we need to decompose $\Phi_{[\mu]}$ into some orthogonal subsystems, where $\mu$ is the weight of $G_2$.  Let $\phi:\Phi_{[\mu]}\to\phi(\Phi_{[\mu]})$ be an isomorphism, then we have ${\bf a}(w)={\bf a}(\phi(w))$.  We can get an integral weight $\phi(\mu)$ of type $\phi(\Phi_{[\mu]})$ by \cite[Lem. 3.5]{BGXW}, in this case we have ${\bf a}(w_{\mu})={\bf a}(w_{\phi(\mu)})$ and then we can compute the value of ${\bf a}(w_{\phi(\mu)})$ by RS algorithm (resp. PyCox) if $\phi(\Phi_{[\mu]})$ is  classical (resp. exceptional) type.
\begin{Prop}
     Keep the notations as in section \ref{notation-G3}.   Let $\Lambda=(\Lambda_1|\underbrace{\Lambda_2,\Lambda_3,\Lambda_4}_{\mu})$ be a non-integral weight of $G(3)$, then ${\bf a}(w_{\Lambda})={\bf a}(w_{\Lambda_1})+{\bf a}(w_{\phi(\mu)})$.
\end{Prop}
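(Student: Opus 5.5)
The plan is to reduce everything to the product decomposition of the Weyl group and then invoke the additivity of the {\bf a}-function, Proposition \ref{a_function_property}\eqref{a-7}, exactly as in the integral case. The first step is to check that for $\frg=G(3)$ the even part is $\frg_0=A_1\oplus G_2$ with $\Delta_0=\Delta_{A_1}\sqcup\Delta_{G_2}$. Here $\Delta_{A_1}=\{\pm2\delta\}$, and the two pieces are orthogonal for the form $(\cdot,\cdot)$. Consequently, for $\Lambda=(\Lambda_1|\mu)$ the integral root system splits as an orthogonal union
$$\Delta_{[\Lambda]}=\{\alpha\in\{\pm2\delta\}~|~(\Lambda,\alpha^\vee)\in\mathbb{Z}\}\ \sqcup\ \Phi_{[\mu]}.$$
This holds because $(\Lambda,\alpha^\vee)$ depends only on $\Lambda_1$ when $\alpha=\pm2\delta$, and only on $\mu$ when $\alpha\in\Phi_{G_2}$. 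The same splitting gives $W_{[\Lambda]}=W_{[\Lambda_1]}\times W_{[\mu]}$, with $W_{[\Lambda_1]}$ either $\mathfrak{S}_2$ or trivial. Likewise the singular set $I$ and the simple system $\Pi_{[\Lambda]}$ split as disjoint unions of the corresponding pieces.

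The second step is to show that the minimal-length element factors as $w_\Lambda=(w_{\Lambda_1},w_\mu)$. Since anti-dominance with respect to $\Delta_{[\Lambda]}$ is checked root by root, $w^{-1}\Lambda$ is anti-dominant exactly when each component is anti-dominant in its own factor. The length function on a direct product of Coxeter groups is additive, and the condition defining $W^I_{[\Lambda]}$ splits componentwise. Hence the unique minimal element is the pair of minimal elements. Applying Proposition \ref{a_function_property}\eqref{a-7} to $W_{[\Lambda]}=W_{[\Lambda_1]}\times W_{[\mu]}$ then gives ${\bf a}_{[\Lambda]}(w_\Lambda)={\bf a}(w_{\Lambda_1})+{\bf a}_{[\mu]}(w_\mu)$. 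If $\Lambda_1$ is non-integral, the first summand is ${\bf a}(e)=0$.

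The third step, which I expect to be the main obstacle, is to identify ${\bf a}_{[\mu]}(w_\mu)$ with ${\bf a}(w_{\phi(\mu)})$. By \cite[Lem. 3.5]{BGXW}, the integral subsystem $\Phi_{[\mu]}$ of $G_2$ for non-integral $\mu$ is isomorphic via $\phi$ to a smaller root system, e.g. $A_2$, $A_1\times A_1$, $A_1$ or empty, and $\phi(\mu)$ is integral for $\phi(\Phi_{[\mu]})$. The isomorphism $\phi$ induces an isomorphism of Coxeter systems $(W_{[\mu]},\Pi_{[\mu]})\cong(W_{\phi(\Phi_{[\mu]})},\phi(\Pi_{[\mu]}))$. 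The {\bf a}-function is defined purely from the Hecke algebra of the Coxeter system, so it is preserved. Moreover $\phi$ carries the anti-dominance and minimality conditions defining $w_\mu$ to those defining $w_{\phi(\mu)}$, so $\phi(w_\mu)=w_{\phi(\mu)}$.

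The delicate point is to verify that $\phi$ matches simple systems and preserves the pairings $(\mu,\alpha^\vee)$ up to the normalization chosen in \cite{BGXW}. This is relevant because long and short roots of $G_2$ may become roots of equal length in $A_2$. I would handle it by appealing directly to the construction in \cite[Lem. 3.5]{BGXW}, which is built so that the coroot pairings are preserved. Combining the three steps yields ${\bf a}(w_\Lambda)={\bf a}(w_{\Lambda_1})+{\bf a}(w_{\phi(\mu)})$.
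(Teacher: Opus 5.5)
Your proposal is correct and takes the same route as the paper. The paper's proof simply cites additivity of the ${\bf a}$-function over direct products (Proposition \ref{a_function_property}\eqref{a-7}) together with \cite[Lem. 3.5]{BGXW}. Your three steps spell out what that citation leaves implicit: the orthogonal splitting $W_{[\Lambda]}=W_{[\Lambda_1]}\times W_{[\mu]}$, the componentwise factorization of $w_\Lambda$, and the transport of $w_\mu$ to $w_{\phi(\mu)}$ along $\phi$.
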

\begin{proof}
    This is a direct corollary of Proposition \ref{a_function_property} \eqref{a-7} and \cite[Lem. 3.5]{BGXW}.
\end{proof}

\begin{Exam}
    Let  $\frg=G(3)$ and $L_{\frg_0}(\Lambda)$ be the highest weight module of $\frg_0$ with $\Lambda^{\rho_0}=(1|-\frac{1}{2},-\frac{3}{2},2)$. Then {\bf a}$(w_{\lambda})={\bf a}(w_{\Lambda_1})=1$. In this case $\mu=(-\frac{1}{2},-\frac{3}{2},2)$ is a non-integral weight, one can easily check that $\Phi_{[\mu]}=\Phi_1\bigcup\Phi_2\simeq  A_1\times \widetilde{A_1}$. The simple system of $\Phi_1$ is $\Pi_1=\{\beta_1=(-1,-1,2)\}$, and the simple system of $\Phi_2$ is $\Pi_2=\{\gamma_1=(1,-1,0)\}$. Suppose the simple system of $A_1$ is $\{\alpha_1=(1,-1)\}$, and the simple system of $\widetilde{A_1} $ is $\{\alpha_1^\prime=(1,-1)\}$. We define a map $\phi:\Phi_{[\mu]}\to A_1\times \widetilde{A_1}$ such that $\phi(\beta_1)=\alpha_1,\phi(\gamma_1)=\alpha_1^\prime$, and we have 
$\phi(\mu)\mid_{A_1}=(\frac{1}{2},-\frac{1}{2})$, $\phi(\mu)\mid_{\widetilde{A_1}}=(\frac{1}{2},-\frac{1}{2})$. By using RS algorithm, we have {\bf a}$(w_{\mu})=2$. Hence {\bf a}$(w_{\Lambda})={\bf a}(w_{\lambda})+{\bf a}(w_{\mu})=3$.
    
\end{Exam}

\section{GK dimensions of highest weight modules $L_{\frg_0}(\Lambda)$}\label{section-GK-formula}
\subsection{GK dimensions of $L_{\frg_0}(\Lambda)$ for basic classical Lie superalgebras excluding exceptional types}
In this section, we will describe the GK dimensions of the highest weight module $L_{\frg_0}(\Lambda)$ for basic classical Lie superalgebras
excluding exceptional type.

\begin{Thm}\label{GK-dim-gl(m|n)}
  Let $\frg=\frsl(m|n)$, for any $(\lambda|\mu)\in\frh^*$, we have
 \begin{align*}
   \text{GK}\dim L_{\frg_0}(\Lambda)&=|\Delta_0^+|-A(P(\lambda|\mu)) \\&=\frac{m(m-1)}{2}-A(P(\lambda))+\frac{n(n-1)}{2}-A(P(\mu)).
 \end{align*}  
\end{Thm}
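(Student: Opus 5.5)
The plan is to reduce to the two simple factors of the even part and then use the type $A$ description of the {\bf a}-function. Write $\frg_0=\frsl(m)\oplus\frsl(n)\oplus\mathbb{C}$. The centre acts by scalars on $L_{\frg_0}(\Lambda)$, so it does not affect growth. Hence $L_{\frg_0}(\Lambda)\cong L(\lambda)\otimes L(\mu)$ as a module over $\frg_0'\oplus\frg_0''=\frsl(m)\oplus\frsl(n)$. Step 1 of the proof of Proposition \ref{semisimple-GK-dim} then gives
\[
\text{GK}\dim L_{\frg_0}(\Lambda)=\text{GK}\dim L(\lambda)+\text{GK}\dim L(\mu).
\]
This already yields the second equality, since $|\Delta_0^+|=\frac{m(m-1)}{2}+\frac{n(n-1)}{2}$ and $A(P(\lambda|\mu))=A(P(\lambda))+A(P(\mu))$ by the definition of $A$ on sets of tableaux. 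So it suffices to prove $\text{GK}\dim L(\lambda)=\frac{m(m-1)}{2}-A(P(\lambda))$ for $\frsl(m)$, where $P(\lambda)$ is built from $\lambda^{\rho_0}=\lambda+\rho_0$.

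For $\frsl(m)$, I would apply Lemma \ref{GKdim-uniform1}: $\text{GK}\dim L(\lambda)=|\Phi^+|-{\bf a}_{[\lambda]}(w_\lambda)$. Note that the dot-action anti-dominance is measured on $\lambda+\rho_0$, and shifting by $\rho-\rho_0$ is harmless on even roots. Then I split the index set by integrality classes: $T_z=\{i\le m \mid \lambda^{\rho_0}_i\in z+\mathbb{Z}\}$. The integral root system is $\Delta_{[\lambda]}=\{\epsilon_i-\epsilon_j\mid i,j\in T_z \text{ for some } z\}\cong\prod_z A_{|T_z|-1}$, and $W_{[\lambda]}\cong\prod_z\mathfrak{S}_{T_z}$. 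By Proposition \ref{a_function_property}\eqref{a-7}, ${\bf a}_{[\lambda]}(w_\lambda)=\sum_z{\bf a}(w_z)$, where $w_z$ is the minimal element sending the subsequence $x_z=(\lambda^{\rho_0}_i)_{i\in T_z}$ to an anti-dominant (weakly increasing) arrangement. Here $P(\lambda)=\{Y(x_z)\}_z$, the RS tableaux of these subsequences; this matches the example, where tableaux are grouped by cosets of $\mathbb{Z}$.

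The main obstacle is identifying ${\bf a}(w_z)$ with $F_A(x_z)=A(Y(x_z))$ when $x_z$ has repeated entries, i.e. in the singular case. For distinct entries, $w_z$ is the permutation sorting $x_z$, and its RS tableau has the same shape as $Y(x_z)$. In that case ${\bf a}(w_z)=\sum(i-1){\bf p}_i$, the known type $A$ formula used in Theorem \ref{a-function-gl(m|n)-osp}, and this equals $A(Y(x_z))$ by Lemma \ref{F-A}. For repeated entries, I would invoke the Bai--Xie result \cite{BX1}. Its content is that the minimal-length $w_z\in W^I$ corresponds to the standardization of $x_z$ in which equal entries are ordered left to right as increasing. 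This is exactly the convention $1<1'$ in the RS algorithm, so the shapes agree. Failing a direct citation, I would check that minimal length in the coset $w W_I$ forces equal values to appear in increasing positional order, and that RS insertion with the strict-bump rule coincides with insertion of this standardized word. Summing over $z$ and over the two factors then gives the theorem.
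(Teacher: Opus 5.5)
Your proposal is correct and follows essentially the paper's route. You reduce via Proposition~\ref{semisimple-GK-dim} to the $\frsl(m)$ and $\frsl(n)$ factors, then combine Lemma~\ref{GKdim-uniform1} with the type $A$ formula ${\bf a}(w)=\sum_{i\geq1}(i-1){\bf p}_i=A(Y)$ from Lemma~\ref{F-A} and Theorem~\ref{a-function-gl(m|n)-osp}. Your explicit splitting into integrality classes $T_z$, and your standardization argument for repeated entries (matching the $1<1'$ convention), spell out details the paper leaves implicit in its reliance on Bai--Xie.
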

\begin{proof}
    By Proposition \ref{semisimple-GK-dim} we have $\text{GK}\dim L_{\frg_0}(\Lambda)=\text{GK}\dim L(\lambda)+\text{GK}\dim L(\mu)$, where $L(\lambda)$ (resp. $L(\mu)$) is an irreducible highest weight $\frsl(m)$ (resp. $\frsl(n)$)-module. Then the theorem
follows from Lemma \ref{F-A} and Theorem \ref{a-function-gl(m|n)-osp}.
\end{proof}

Next we consider the   orthosymplectic Lie superalgebras.
\subsection{The integral case}
In this subsection, the anti-dominant weight $\delta$ is always integral.

\begin{Lem}{\cite[Lem. 5.2, Lem. 5.3]{BXX}}\label{lambda-w}
\begin{enumerate}
    \item  Suppose $\Phi=B_n$ or $C_n$. Let $\lambda=w\delta$ for $w\in W^I$, then ${\bf p}(\lambda^-)={\bf p}({}^-w)$.

    \item Suppose $\Phi=D_n$. Let $\lambda=w\delta$ for $w\in W^I$, then ${\bf p}(\lambda^-)={\bf p}({}^-w)$ or ${\bf p}({}^-wt)$, where $t=(-1,2,\cdots,n)=(-1,1)\in W_B$.
\end{enumerate}
\end{Lem}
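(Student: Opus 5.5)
The plan is to reduce the statement to the Robinson--Schensted theory of the symmetric group, using the explicit description of $W^I$ and of the anti-dominant representative $\delta$. Write $\lambda=w\delta$ with $\delta$ integral anti-dominant and $w\in W^I$ of minimal length. In types $B_n$ and $C_n$, $W$ acts on $\mathbb{Z}^n$ by signed permutations, and $\lambda^-=(\lambda_1,\dots,\lambda_n,-\lambda_n,\dots,-\lambda_1)$ is a sequence of length $2n$. The first step is to record where $\lambda^-$ sits relative to ${}^-w$. Anti-dominance of $\delta$ means $\delta_1\le\delta_2\le\dots\le\delta_n\le 0$ (with the convention for $B/C$). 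Hence the doubled sequence $(-\delta_n,\dots,-\delta_1,\delta_1,\dots,\delta_n)$ is weakly increasing in the positions indexed by $-n,\dots,-1,1,\dots,n$. Consequently $\lambda^-$ is obtained from ${}^-w$, read as a sequence of labels in $\{\pm1,\dots,\pm n\}$, by substituting for each label $\pm i$ the value $\pm\delta_i$ via an order-preserving (weakly) map; the reversal conventions between $x^-$ and ${}^-x$ are absorbed by the symmetry $x\mapsto -x$ reversed.

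The second step is the standard fact that RS insertion depends only on the relative order of entries. If two sequences are related by a strictly order-preserving relabeling, their tableaux have the same shape. Ties are the only issue: when $\delta_i=\delta_{i+1}$, or $\delta_n=0$ so that $\delta_n=-\delta_n$, the map is only weakly monotone. Here I would use minimality of $w\in W^I$. The condition $\ell(ws_\alpha)>\ell(w)$ for $\alpha\in I$ (the simple roots stabilizing $\delta$) forces equal entries of $\lambda$ to appear in $w$'s one-line notation in increasing label order. For the short or long simple root $\alpha_n$ with $\delta_n=0$, it forces the corresponding sign condition. I would then check that, with the tie-breaking convention of the paper's example ($1<1'$, i.e. earlier equal entries count as smaller), equal values in $\lambda^-$ are inserted exactly as the increasing labels of ${}^-w$ would be. This gives $Y(\lambda^-)$ and $Y({}^-w)$ the same shape, so ${\bf p}(\lambda^-)={\bf p}({}^-w)$, proving (1).

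For (2), type $D_n$, the group consists only of even signed permutations. The stabilizer condition for $\alpha_n=\epsilon_{n-1}+\epsilon_n$ no longer controls the sign of the entry carrying $\delta_n=0$. The tie-breaking argument then goes through either for $w$ or for $wt$, where $t=(-1,1)$ flips the sign of the label attached to the zero entry (equivalently, of the first coordinate in the chosen convention). So I would split into the cases $\delta$ with no zero entry, where the $B$ argument applies verbatim, and $\delta$ with a zero entry, where one of $w,wt$ satisfies the $B$-type minimality and hence matches the shape.

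The main obstacle is the tie analysis: checking carefully that the $W^I$-minimality conditions translate into exactly the ordering among equal values that the RS convention requires, including the central pair $\pm\delta_n$ when $\delta_n=0$ and the $D$-type ambiguity.
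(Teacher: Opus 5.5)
The paper gives no proof of this lemma; it quotes it from \cite[Lem.~5.2, Lem.~5.3]{BXX}. Your argument has a genuine gap, and it is at the first step, before any tie-breaking question arises.

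With $\delta_1\le\cdots\le\delta_n\le 0$, the sequence $(-\delta_n,\dots,-\delta_1,\delta_1,\dots,\delta_n)$ is \emph{not} weakly increasing. It rises to $-\delta_1\ge 0$ and then drops to $\delta_1\le 0$. Put $\delta_{-i}:=-\delta_i$. Then $i\mapsto\delta_i$ is monotone for the rotated order $1\prec 2\prec\cdots\prec n\prec -n\prec\cdots\prec -1$, not for the order $-n<\cdots<-1<1<\cdots<n$ in which ${}^-w$ is read.

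Two further slips compound this:
\begin{itemize}
\item From $w(\epsilon_i)=\epsilon_{w(i)}$ one gets $\lambda_j=\delta_{w^{-1}(j)}$. So substituting $\delta$ into the labels of $w$ produces $w^{-1}\delta$, not $\lambda$.
\item Reverse-negation fixes both $x^-$ and ${}^-x$; it does not exchange them. For $x=(1,2)$ the shapes are $[2,2]$ and $[4]$.
\end{itemize}
Tracking everything correctly for regular $\delta$: let $\hat w$ be $w$ regarded as a permutation of the $2n$ ordered labels, and $c$ the rotation by $n$. Then $\lambda^-$ has the relative order of the one-line notation of $c\,\hat w^{-1}c$, whereas ${}^-w$ is the one-line notation of $\hat w$. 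RS shape is invariant under inversion, but not under conjugation by $c$.

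Here is a concrete failure. Take $n=2$, $\delta=(-2,-1)$ (regular, so $I=\emptyset$) and $w=s_{\epsilon_2}=(1,-2)$. This $w$ is an involution, so no action convention matters.
\begin{itemize}
\item $\lambda=(-2,1)$, and $Y(\lambda^-)=Y(-2,1,-1,2)$ has shape $[3,1]$.
\item $Y({}^-w)=Y(2,-1,1,-2)$ has shape $[2,1,1]$.
\end{itemize}
So under the conventions you fix, no relabelling argument can give ${\bf p}(\lambda^-)={\bf p}({}^-w)$; the literal shape identity already fails for regular $\delta$ in rank two. What survives here is ${\bf p}^{\odd}=[1,1]$ on both sides, i.e.\ $F_B(\lambda^-)=F_B({}^-w)=1={\bf a}(w)$. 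That equality of $F_B$ (resp.\ $F_D$) values is all Proposition~\ref{GK-dim-L_g0} actually uses. Proving it needs input about ${\bf a}$-values or cells, not order-invariance of RS insertion. The tie analysis you single out as the main obstacle is not where the difficulty lies.

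Your type $D_n$ discussion has a separate error. Anti-dominance there only gives $\delta_1\le\cdots\le\delta_{n-1}\le-|\delta_n|$, so $\delta_n$ may be positive. For example, $\delta=(-3,-2,1)$ is regular anti-dominant for $D_3$. With $w=e$, $\lambda^-=(-3,-2,1,-1,2,3)$ has shape $[5,1]$, not the shape $[6]$ of ${}^-e$; it coincides with ${}^-t$. So the $t$-twist is forced by the sign of $\delta_n$ even for regular $\delta$ with no zero entries. This contradicts your claim that this case reduces verbatim to type $B$.
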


\begin{Prop}\label{GK-dim-L_g0}
    Let $\Lambda=(\lambda_1,\lambda_2,\cdots,\lambda_m|\mu_1,\mu_2\cdots,\mu_n)\in\frh^*$ be integral, then
$$
\scalebox{0.8}{$
\text{GK}\dim L_{\frg_0}(\Lambda) = 
\begin{cases} 
\frac{m(m-1)}{2}+\frac{n(n-1)}{2}-( \sum_{i\geq 1}(i-1){\bf p}(\lambda^{\rho_0})_i+\sum_{j\geq 1}(j-1){\bf p}(\mu^{\rho_0})_j) & \text{~if~} \frg=\frsl(m|n), \\
m^2+n^2-(\sum_{i\geq 1}(i-1){\bf p}((\lambda^{\rho_0})^-)_i^{\odd}+\sum_{j\geq 1}(j-1){\bf p}((\mu^{\rho_0})^-)_j^{\odd}) & \text{~if~} \frg=\mathfrak{osp}(2m+1|2n), \\
m^2-m+n^2-(\sum_{i\geq 1}(i-1){\bf p}((\lambda^{\rho_0})^-)_i^{\ev}+\sum_{j\geq 1}(j-1){\bf p}((\mu^{\rho_0})^-)_j^{\odd}) & \text{~if~} \frg=\mathfrak{osp}(2m|2n)\text{~for~}m\geq2, \\
n^2-\sum_{i\geq 1}(i-1){\bf p}((\mu^{\rho_0})^-)_i^{\odd} & \text{~if~} \frg=\mathfrak{osp}(2|2n).
\end{cases}
$}
$$
\end{Prop}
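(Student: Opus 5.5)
The plan is to reduce everything to the even part and then apply the known results for simple Lie algebras factor by factor. In each case $\frg_0$ decomposes as a direct sum $\frg_0'\oplus\frg_0''$, possibly with an abelian summand:
\begin{itemize}
\item $\frg=\frsl(m|n)$: $\frsl(m)\oplus\frsl(n)\oplus\mathfrak{u}(1)$;
\item $\frg=\mathfrak{osp}(2m+1|2n)$: $\frso(2m+1)\oplus\frsp(2n)$;
\item $\frg=\mathfrak{osp}(2m|2n)$: $\frso(2m)\oplus\frsp(2n)$;
\item $\frg=\mathfrak{osp}(2|2n)$: $\frso(2)\oplus\frsp(2n)$.
\end{itemize}
The abelian summand acts by scalars and contributes nothing. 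The tensor-product argument of Step 1 in Proposition \ref{semisimple-GK-dim} then gives
\[
\text{GK}\dim L_{\frg_0}(\Lambda)=\text{GK}\dim L(\lambda)+\text{GK}\dim L(\mu).
\]
That step uses only that $\frg_0$ is a direct sum, not that $\frg$ is of type I, so it applies here too. Correspondingly, $W_0=W'\times W''$, and Proposition \ref{a_function_property}\eqref{a-7} splits ${\bf a}(w_\Lambda)={\bf a}(u)+{\bf a}(v)$. So it suffices to treat each simple factor separately using Lemma \ref{GKdim-uniform1}:
\[
\text{GK}\dim L(\lambda)=|\Phi'^+|-{\bf a}(u),
\]
where $\lambda^{\rho_0}=u\delta$ with $\delta$ anti-dominant and $u\in W^I$ minimal.

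Next I evaluate $|\Phi^+|$ for each factor:
\begin{itemize}
\item $A_{m-1}$: $m(m-1)/2$;
\item $B_m$: $m^2$;
\item $C_n$: $n^2$;
\item $D_m$: $m^2-m$.
\end{itemize}
This yields the constant terms $\frac{m(m-1)}{2}+\frac{n(n-1)}{2}$, $m^2+n^2$, $m^2-m+n^2$ and $n^2$. For $\mathfrak{osp}(2|2n)$ the $\frso(2)$ factor has trivial Weyl group and contributes $0$.

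It remains to express ${\bf a}(u)$ through the weight rather than through $u$.
\begin{itemize}
\item Type $A$: ${\bf a}(u)=F_A(u)$ by Theorem \ref{a-function-gl(m|n)-osp} and Lemma \ref{F-A}. The RS tableau shape of the permutation $u$ agrees with the shape ${\bf p}(\lambda^{\rho_0})$ of the sequence $\lambda^{\rho_0}$, because $u$ records the relative order of the entries and the anti-dominant $\delta$ is weakly increasing in the appropriate sense. For ties this requires the convention $1<1'$ of the RS algorithm together with minimality of $u$ in $W^I$, and I would cite the type $A$ analysis of \cite{BX1}.
\item Type $B_m$ and $C_n$: Lemma \ref{lambda-w}(1) gives ${\bf p}((\lambda^{\rho_0})^-)={\bf p}({}^-u)$. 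Lemma \ref{F-B-D} then gives ${\bf a}(u)=F_B({}^-u)=F_B((\lambda^{\rho_0})^-)$, producing the ${\bf p}^{\odd}$ sums. Type $C$ uses the same $F_B$ formula, since $W_C=W_B$ and the ${\bf a}$-function depends only on the Coxeter group.
\end{itemize}

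I expect the main obstacle to be type $D_m$, in the $\frso(2m)$ factor of $\mathfrak{osp}(2m|2n)$. Lemma \ref{lambda-w}(2) only gives ${\bf p}((\lambda^{\rho_0})^-)\in\{{\bf p}({}^-u),{\bf p}({}^-ut)\}$. I must show that $F_D$ takes the same value on both, so that ${\bf a}(u)=F_D((\lambda^{\rho_0})^-)$ regardless of which case occurs. My first attempt is the observation that $t$ is the diagram automorphism of $D_m$ exchanging $s_{\epsilon_1-\epsilon_2}$ and $s_{\epsilon_1+\epsilon_2}$. Conjugation by $t$ preserves the Hecke algebra structure, so ${\bf a}(tut)={\bf a}(u)$. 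Combined with Lemma \ref{F-B-D} and the fact that ${}^-(ut)$ and ${}^-(tut)$ differ only by sign changes that do not affect the RS shape, this should give $F_D({}^-ut)={\bf a}(u)$. Failing that, I would quote \cite[Prop. 5.4]{BXX}, which states the $D_n$ GK formula $n^2-n-F_D(\lambda^-)$ directly. Summing the two factors then gives each line of the statement.
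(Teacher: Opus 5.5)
Your reduction is the same as the paper's: split $\frg_0$ into its simple factors, add the two GK dimensions, apply Lemma \ref{GKdim-uniform1} to each factor, and translate ${\bf a}(u)$ into RS data of the weight via Lemmas \ref{F-A}, \ref{F-B-D} and \ref{lambda-w}. Types $A$, $B$, $C$ and the $\mathfrak{osp}(2|2n)$ case are fine. The gap is in your main argument for the $\frso(2m)$ factor.

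The step that fails is the claim that ${}^-(ut)$ and ${}^-(tut)$ have the same RS shape. Left multiplication by $t$ swaps the two adjacent values $-1<1$ in the sequence, and this generally changes the shape. Already for $u=e$ you have ${}^-(ut)={}^-t=(-m,\dots,-2,1,-1,2,\dots,m)$, whose tableau has shape $[2m-1,1]$. By contrast, ${}^-(tut)={}^-e$ is increasing, with shape $[2m]$. This is why Lemma \ref{lambda-w}(2) needs its second alternative.

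Only the weaker statement survives: ${\bf p}^{\ev}$ is unchanged (it is $[m]$ in both cases above). That is exactly the nontrivial combinatorial fact \cite[Thm. 4.10]{BXX}, ${\bf p}({}^-w)^{\ev}={\bf p}({}^-wt)^{\ev}$, which the paper cites.

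The diagram-automorphism detour does not reduce the problem. Your identity ${\bf a}(tut)={\bf a}(u)$ is correct: in this convention the fork of $D_m$ sits at coordinates $1,2$, and conjugation by $t$ swaps $s_{\epsilon_1-\epsilon_2}$ and $s_{\epsilon_1+\epsilon_2}$. But it only tells you that swapping both the middle positions and the middle values of ${}^-u$ preserves $F_D$. Now use ${\bf p}(\pi)={\bf p}(\pi^{-1})$ and $({}^-x)^{-1}={}^-(x^{-1})$. Comparing ${}^-(ut)$ with ${}^-(tut)$ then becomes comparing ${}^-x$ with ${}^-(xt)$ for $x=tu^{-1}$. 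That is a single middle swap, which is equivalent to the statement you set out to prove.

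So the type $D$ step has to come from \cite[Thm. 4.10]{BXX}, or from the type $D$ GK formula in \cite{BXX} as in your fallback, not from the automorphism. With that citation in place, the rest of your argument goes through.
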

\begin{proof}
    
  For $\frg=\mathfrak{osp}(2m+1|2n)$,  this is a consequence of Theorem \ref{a-function-gl(m|n)-osp}, Lemma \ref{lambda-w} and the fact that $|\Delta_0^+|=m^2+n^2$.

  For $\frg=\mathfrak{osp}(2m|2n)$,  by Lemma \ref{lambda-w} we have ${\bf p}(\Lambda^-)={\bf p}({}^-w)$ or ${\bf p}({}^-wt)$. Then we have ${\bf p}(\Lambda^-)^{\ev}={\bf p}({}^-w)^{\ev}={\bf p}({}^-wt)^{\ev}$ by \cite[Thm. 4.10]{BXX}. It follows from Theorem \ref{a-function-gl(m|n)-osp} and the fact that $|\Delta_0^+|=m^2-m+n^2$.

  For $\frg=\mathfrak{osp}(2|2n)$, it follows from Theorem \ref{a-function-gl(m|n)-osp} and the fact that $|\Delta_0^+|=n^2$.
\end{proof}


    

\subsection{The general case}\label{The-general-case}
In this subsection we consider the GK dimension of $L_{\frg_0}(\Lambda)$ with $\Lambda$ not necessarily integral. 

For orthosymplectic Lie superalgebras, let $\Delta_{[\Lambda]}$ be an integral root subsystem of $\Delta_0$, i.e. $\Delta_{[\Lambda]}=\{\alpha\in\Delta_0~|~(\Lambda,\alpha^{\vee})\in\mathbb{Z}\}$. 

Let $T^1_z:=\{i\leq m~|~\Lambda_i\in z+\mathbb{Z}\}$ for $z\in\mathbb{C}$ and $T^2_z:=\{m+1\leq i\leq m+n~|~\Lambda_i\in z+\mathbb{Z}\}$ for $z\in\mathbb{C}$. Define $\Delta_z$ as follows.

If $\frg=\mathfrak{osp}(2m+1|2n)$ and $z\in\mathbb{Z}$, set
$$\Delta_z := \{\pm(\epsilon_{i_1}\pm\epsilon_{j_1}),\pm\epsilon_{k_1},\pm(\delta_{i_2}\pm\delta_{j_2}),\pm2\delta_{k_2}~|~i_l,j_l,k_l\in T^l_z, i_l<j_l\text{~for~}l\in\{1,2\}\}.$$

If $\frg=\mathfrak{osp}(2m+1|2n)$ and $z\in\frac12+\mathbb{Z}$, set
$$\Delta_z := \{\pm(\epsilon_{i_1}\pm\epsilon_{j_1}),\pm\epsilon_{k_1},\pm(\delta_{i_2}\pm\delta_{j_2})~|~i_l,j_l,k_l\in T^l_z, i_l<j_l\text{~for~}l\in\{1,2\}\}.$$

If $\frg=\mathfrak{osp}(2m|2n)$ and $z\in\mathbb{Z}$, set
$$\Delta_z := \{\pm(\epsilon_{i_1}\pm\epsilon_{j_1}),\pm(\delta_{i_2}\pm\delta_{j_2}),\pm2\delta_{k_2}~|~i_l,j_l,k_l\in T^l_z, i_l<j_l\text{~for~}l\in\{1,2\}\}.$$

If $\frg=\mathfrak{osp}(2m|2n)$ and $z\in\frac12+\mathbb{Z}$, set
$$\Delta_z := \{\pm(\epsilon_{i_1}\pm\epsilon_{j_1}),\pm(\delta_{i_2}\pm\delta_{j_2})~|~i_l,j_l,k_l\in T^l_z, i_l<j_l\text{~for~}l\in\{1,2\}\}.$$

In general, if $z\notin\frac{1}{2}\mathbb{Z}$, set
$\Delta_z=\Delta_z^{1}\cup\Delta_z^{2}$, where
\begin{align*}
   \Delta_z^{1}:=\{\epsilon_i-\epsilon_j\mid i,j\in T_z^1,\ i\neq j\}&\cup\{\pm(\epsilon_i+\epsilon_k)
\mid i\in T_z^1,\ k\in T_{-z}^1\}\\&\cup\{\epsilon_k-\epsilon_l\mid k,l\in T_{-z}^1,\ k\neq l\}
\end{align*}
and
\begin{align*}
\Delta_z^{2}:=\{\delta_p-\delta_q\mid p,q\in T_z^2,\ p\neq q\}&\cup\{\pm(\delta_p+\delta_r)\mid p\in T_z^2,\ r\in T_{-z}^2\}\\&\cup\{\delta_r-\delta_s\mid r,s\in T_{-z}^2,\ r\neq s\}.
\end{align*}

For $\frg=\mathfrak{osp}(2|2n)$, we only need to remove $\pm(\epsilon_{i_1}\pm\epsilon_{j_1})$ from $\Delta_z$ in the case of $\frg=\mathfrak{osp}(2m|2n)$.
Then $\Delta_{[\Lambda]}$ can be decomposed into some orthogonal subsystems by the following proposition.
    \begin{Prop}\label{phi-lambda-phi-z}
    For $\Lambda\in\frh^*$, we have
    $$\Delta_{[\Lambda]}=\bigcup_{0\leq \text{Re}(z)\leq\frac12}\Delta_z,$$
and $\Delta_z$ are mutually orthogonal, where $\mathrm{Re}(z)$ denotes the real part of $z$.
\end{Prop}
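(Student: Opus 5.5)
The plan is to argue root by root. Every root in $\Delta_0$ lies entirely in the $\epsilon$-part (type $B_m$ or $D_m$) or entirely in the $\delta$-part (type $C_n$), and the two parts are orthogonal. So I will show that each $\alpha\in\Delta_0$ with $(\Lambda,\alpha^\vee)\in\mathbb{Z}$ lies in exactly one $\Delta_z$ with $0\le \mathrm{Re}(z)\le \frac12$, where $z$ is a suitable representative of the class of $\Lambda_i$ modulo $\mathbb{Z}$ and $\pm$. Conversely, I will check that every root listed in some $\Delta_z$ has integral pairing with $\Lambda$. Orthogonality then reduces to an index-disjointness argument.

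For the first inclusion, I compute the coroot pairings. For $\alpha=\epsilon_i-\epsilon_j$ or $\delta_p-\delta_q$ the pairing is $\Lambda_i-\Lambda_j$, so integrality means $i,j\in T_z$ for a common $z$. For $\epsilon_i+\epsilon_j$ the pairing is $\Lambda_i+\Lambda_j$, so integrality means $i\in T_z$ and $j\in T_{-z}$. For $\epsilon_k$ in type $B$ the coroot is $2\epsilon_k$, so integrality means $2\Lambda_k\in\mathbb{Z}$, i.e.\ $z\in\frac12\mathbb{Z}$. For $2\delta_k$ the coroot is $\delta_k$ up to the sign of the form, so integrality means $\Lambda_k\in\mathbb{Z}$. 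The sign convention $(\delta,\delta)=-1$ only changes signs and does not affect integrality. Since $z$ and $-z$ give the same $T_z\cup T_{-z}$ up to swapping, and $z$ is defined only modulo $\mathbb{Z}$, I can normalize $z$ to satisfy $0\le\mathrm{Re}(z)\le\frac12$. With this normalization the cases $z\in\mathbb{Z}$, $z\in\frac12+\mathbb{Z}$ and $z\notin\frac12\mathbb{Z}$ (where $T_z\neq T_{-z}$) match exactly the definitions of $\Delta_z$ in the paper. In particular, the short roots $\epsilon_k$ occur for both $z\in\mathbb{Z}$ and $z\in\frac12+\mathbb{Z}$, while $2\delta_k$ occurs only for $z\in\mathbb{Z}$. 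For $\mathfrak{osp}(2|2n)$ the $\epsilon$-part contributes no even roots, hence the removal of those roots.

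One subtlety is the boundary $\mathrm{Re}(z)\in\{0,\frac12\}$ with $z$ not real. There $z$ and $-z+1$ (or $-z$) have the same real part and represent the same pair of classes, so the union must be taken over classes $\{z,-z\}$ modulo $\mathbb{Z}$ to avoid double counting. I will state this as the interpretation of the index set.

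For orthogonality, take $\Delta_z$ and $\Delta_{z'}$ with distinct classes. Their index sets $T_z\cup T_{-z}$ and $T_{z'}\cup T_{-z'}$ are disjoint, because each $\Lambda_i$ lies in exactly one coset of $\mathbb{Z}$. Any $\epsilon$-root of $\Delta_z$ is supported on the former indices and any root of $\Delta_{z'}$ on the latter, so their inner product vanishes, since $(\epsilon_i,\epsilon_j)=0$ and $(\delta_p,\delta_q)=0$ for distinct indices. Cross terms between the $\epsilon$- and $\delta$-parts vanish by $(\epsilon_i,\delta_p)=0$.

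The main obstacle I expect is the careful bookkeeping of the boundary real parts and of the $z\in\frac12\mathbb{Z}$ cases, where $T_z=T_{-z}$. That step is where the normalization must be stated precisely; everything else is a routine coroot computation.
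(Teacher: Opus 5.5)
Your proposal is correct and takes the same route as the paper: you verify both inclusions root by root via the coroot pairings, you identify $\Delta_z$ with $\Delta_{\pm z+k}$ for $k\in\mathbb{Z}$ (the paper's ``$z_1\pm z_2\in\mathbb{Z}$ implies $\Delta_{z_1}=\Delta_{z_2}$''), and you deduce orthogonality for distinct classes. Your orthogonality step is in fact more precise than the paper's. The paper only records $\Delta_{z_1}\cap\Delta_{z_2}=\emptyset$, which does not by itself give orthogonality; what does give it is your observation that the index sets $T_z\cup T_{-z}$ and $T_{z'}\cup T_{-z'}$ are disjoint.
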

\begin{proof}
One can easily check that $\Delta_z\subseteq \Delta_{[\Lambda]}$, and any $\alpha'\in\Delta_{[\Lambda]}$ must belong to some $\Delta_z$, which implies that $\Delta_{[\Lambda]}$ is a union of all $\Delta_z$.

For $z_1,z_2\in\mathbb{C}$, if $z_1\pm z_2\in\mathbb{Z}$ we have $\Delta_{z_1}=\Delta_{z_2}$. If $z_1\pm z_2\notin\mathbb{Z}$ we have $\Delta_{z_1}\cap\Delta_{z_2}=\emptyset$, it shows that $\Delta_{z_1}$ and $\Delta_{z_2}$ are orthogonal to each other.
    
\end{proof}

We define $[\Lambda] $ to be the set of  maximal subsequences $ x $ of  $ \Lambda$ such that any two entries of $ x $ have an integral  difference. In this case, we set $ [\Lambda]_1 $ (resp. $ [\Lambda]_2 $) to be the subset of $ [\Lambda] $ consisting of sequences with  all entries belonging to $ \mathbb{Z} $ (resp. $ \frac12+\mathbb{Z} $) and denote them by $\Lambda_0$ (resp. $\Lambda_{\frac12}$). We set $[\Lambda]_{1,2}=[\Lambda]_1\cup [\Lambda]_2, \quad [\Lambda]_3=[\Lambda]\setminus[\Lambda]_{1,2}$. More specifically, for $\Lambda=(\lambda|\mu)$, we will use notations $\lambda_0$, $\lambda_{\frac12}$, $[\lambda]_{3}$, $\mu_0$, $\mu_{\frac12}$ and $[\mu]_{3}$.

\begin{Exam}
		Let $\Lambda=(7,3.5,4,1|4,2,1.2,3.5,1.5,3,4.2)\in\frh^*$, then
		$$\lambda_0=(7,4,1),\:\lambda_{\frac12}=(3.5),\:\mu_0=(4,2,3),\:\mu_{\frac12}=(3.5,1.5)\text{~and~}[\mu]_3=(1.2,4.2).$$
	\end{Exam}

\begin{Def}
	Let  $ x=(\lambda_{i_1}, \lambda_{i_2},\cdots \lambda_{i_r})\in[\lambda]_3 $. Let $  x'=(\lambda_{j_1}, \lambda_{j_2},\cdots, \lambda_{j_p}) $ be the maximal subsequence of $ x $ such that $ j_1=i_1 $ and the difference of any two entries of $ x'$ is an integer. Let $ z= (\lambda_{k_1}, \lambda_{k_2},\cdots, \lambda_{k_q}) $ be the subsequence obtained by deleting $ x'$ from $ x $, which is possibly empty. 
	Define
	$$  \tilde{x}_z=(\lambda_{j_1}, \lambda_{j_2},\cdots, \lambda_{j_p}, -\lambda_{k_q}, -\lambda_{k_{q-1}},\cdots ,-\lambda_{k_1}).  $$
    Similarly we can define $\tilde{y}_z$ on $[\mu]_3$.
\end{Def}

\begin{Thm}\label{GKdim-thm}
    Let $\Lambda=(\lambda|\mu)=(\lambda_1,\lambda_2,\cdots,\lambda_{m}|\mu_1,\mu_2,\cdots,\mu_n)\in\frh^*$.
\begin{enumerate}
\item  If $\frg=\mathfrak{sl}(m|n)$, then
$$\text{GK}\dim L_{\frg_0}(\Lambda)=\frac{m(m-1)}{2}+\frac{n(n-1)}{2}-(\sum_{x\in[\lambda],y\in[\mu]}F_A(x^{\rho_0}_z\|y^{\rho_0}_z)).$$
    \item If $\frg=\mathfrak{osp}(2m+1|2n)$, then
    $$\text{GK}\dim L_{\frg_0}(\Lambda)=m^2+n^2-F_B((\lambda_0^{\rho_0})^-\|(\mu_0^{\rho_0})^-\|(\lambda_{\frac12}^{\rho_0})^-)-  F_D((\mu_{\frac12}^{\rho_0})^-)-\sum_{x\in[\lambda]_3,y\in[\mu]_3}F_A(\tilde{x}^{\rho_0}_z\|\tilde{y}^{\rho_0}_z).$$
\item If $\frg=\mathfrak{osp}(2m|2n)$, then
$$\text{GK}\dim L_{\frg_0}(\Lambda)=m^2-m+n^2-F_B((\mu_0^{\rho_0})^-)-F_D((\lambda_{\frac12}^{\rho_0})^-\|(\mu_{\frac12}^{\rho_0})^-\|(\lambda_0^{\rho_0})^-)-\sum_{x\in[\lambda]_3,y\in[\mu]_3}F_A(\tilde{x}^{\rho_0}_z\|\tilde{y}^{\rho_0}_z).$$
\item  If $\frg=\mathfrak{osp}(2|2n)$,   then
$$\text{GK}\dim L_{\frg_0}(\Lambda)=n^2-F_B((\mu_0^{\rho_0})^-)-F_D((\mu_{\frac12}^{\rho_0})^-)-\sum_{x\in[\lambda]_3,y\in[\mu]_3}F_A(\tilde{x}^{\rho_0}_z\|\tilde{y}^{\rho_0}_z).$$

\end{enumerate}
Here $F_Y(p\|q\|s)=F_Y(p)+F_Y(q)+F_Y(s)$ for function $F_Y$ defined in Definition \ref{F_X} with $Y\in\{A,B,D\}$.  
\end{Thm}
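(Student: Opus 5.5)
The plan is to reduce everything to the corresponding results for the simple (or semisimple) even part, via Proposition \ref{semisimple-GK-dim} and Lemma \ref{GKdim-uniform1}. First, since $\frg_0=\frg_0'\oplus\frg_0''$ (plus a possibly abelian center, which does not affect the GK dimension), we write $L_{\frg_0}(\Lambda)\cong L(\lambda)\otimes L(\mu)$. Step 1 of the proof of Proposition \ref{semisimple-GK-dim} then gives $\text{GK}\dim L_{\frg_0}(\Lambda)=\text{GK}\dim L(\lambda)+\text{GK}\dim L(\mu)$. Note that this Step 1 argument uses only the tensor-product filtration and does not use the type I hypothesis, so it applies equally to $\mathfrak{osp}(2m+1|2n)$ and $\mathfrak{osp}(2m|2n)$. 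We also note that $|\Delta_0^+|$ equals $\frac{m(m-1)}{2}+\frac{n(n-1)}{2}$, $m^2+n^2$, $m^2-m+n^2$, and $n^2$ in the four cases. This reduces the statement to computing ${\bf a}_{[\lambda]}(w_\lambda)$ for $\frg_0'$ and ${\bf a}_{[\mu]}(w_\mu)$ for $\frg_0''$ separately, using Lemma \ref{GKdim-uniform1} with $\rho_0$ as the relevant $\rho$. This is why $\lambda^{\rho_0},\mu^{\rho_0}$ appear.

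Next, for each factor we decompose the integral root system. By Proposition \ref{phi-lambda-phi-z}, $\Delta_{[\Lambda]}$ is an orthogonal union of the $\Delta_z$, so $W_{[\Lambda]}$ is the direct product of the corresponding Weyl groups. The minimal element $w_\Lambda$ then splits into components, and by Proposition \ref{a_function_property}\eqref{a-7} the ${\bf a}$-value is a sum over components. We identify the type of each component:
\begin{enumerate}
\item For $z\in\mathbb{Z}$, the $\epsilon$-part is of type $B$ (for $\frso(2m+1)$) or $D$ (for $\frso(2m)$), and the $\delta$-part is of type $C$. Since $C$ and $B$ share a Weyl group, the $\delta$-part is governed by $F_B$.
\item For $z\in\frac12+\mathbb{Z}$, the $\epsilon$-part is of type $B$ for $\frso(2m+1)$ (since $\epsilon_k$ remains integral) and of type $D$ for $\frso(2m)$. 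The $\delta$-part is of type $D$, because $2\delta_k$ gives the non-integral pairing $(\Lambda,\delta_k)\notin\mathbb{Z}$.
\item For $z\notin\frac12\mathbb{Z}$, the component is of type $A$, realized on the sequence $\tilde{x}_z$ obtained by negating the entries in the $-z$ coset.
\end{enumerate}
On each integral-type component we apply Lemma \ref{lambda-w} together with Lemma \ref{F-B-D}: ${\bf a}=F_B(\cdot^-)$ or $F_D(\cdot^-)$. In type $D$ we use the invariance ${\bf p}^{\ev}({}^-w)={\bf p}^{\ev}({}^-wt)$, exactly as in the proof of Proposition \ref{GK-dim-L_g0}. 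On type $A$ components, ${\bf a}=F_A$ by Lemma \ref{F-A} and the RS description from \cite{BX1}. Summing these contributions yields (2), (3), and (4), where for $\mathfrak{osp}(2|2n)$ the $\epsilon$-factor is abelian and contributes nothing. For (1), the same orthogonal decomposition by integer cosets in type $A$ (cf. \cite{BX1}) gives the sum of $F_A$ over the classes in $[\lambda]$ and $[\mu]$.

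The main obstacle is the non-half-integral case. Here we must check that the map $\phi:\Delta_z\to A_{r-1}$ sending $\epsilon_i\mapsto\epsilon_i$ for $i\in T_z$ and $\epsilon_k\mapsto-\epsilon_k$ for $k\in T_{-z}$ is a root system isomorphism. We must also check that it carries the anti-dominance and minimal-length data of Lemma \ref{GKdim-uniform1} to those of the type $A$ weight $\tilde{x}_z^{\rho_0}$, so that ${\bf a}_{[\Lambda]}(w)$ equals $F_A(\tilde{x}_z^{\rho_0})$. I would follow the argument of \cite[§5]{BXX} for the classical case verbatim, noting that the ordering in $\tilde{x}_z$, with the negated entries reversed, matches the induced positive system. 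A secondary check is that, for type $D$ half-integral blocks, replacing $\rho$ by $\rho_0$ preserves the integrality class, which holds because $\rho_0$ is integral on each factor.
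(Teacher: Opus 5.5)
Your proposal is correct and follows the paper's proof. Both arguments split $W_{[\Lambda]}$ into the direct product of the $W_z$ attached to the mutually orthogonal subsystems $\Delta_z$ of Proposition \ref{phi-lambda-phi-z}, add the ${\bf a}$-values of the components by Proposition \ref{a_function_property}\eqref{a-7}, evaluate each component by the integral-case formulas $F_A$, $F_B$, $F_D$ according to its type, and conclude with Lemma \ref{GKdim-uniform1}. Your preliminary reduction to $L(\lambda)\otimes L(\mu)$ via Step 1 of Proposition \ref{semisimple-GK-dim} only reorganizes the same additivity, and your block-by-block type identification (the $B$ versus $D$ behaviour of half-integral cosets, and the reversed ordering of negated entries in $\tilde{x}_z$) spells out what the paper leaves as ``the results in the integral cases.''
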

\begin{proof}
Let $W_z$ be the Weyl group corresponding to $\Delta_z$. Then $W_{[\Lambda]}$ is the direct product of $W_z$ by Proposition \ref{phi-lambda-phi-z}. Let $w=\prod_{0\leq \text{Re}(z)\leq\frac12}w_z$ and $w_z$ the component in $W_z$. Then we have ${\bf a}_{[\Lambda]}(w)=\sum_{0\leq \text{Re}(z)\leq\frac12}{\bf a}_{[\Lambda]}(w_z)$, and ${\bf a}_{[\Lambda]}(w_z)$ is given by function $F_Y$ according to the type of $\Delta_z$ by the results in the integral cases. The theorem follows from Lemma \ref{GKdim-uniform1}.

\end{proof}

\begin{Exam}
    Let $\frg=\mathfrak{osp}(15|8)$. Suppose that $$\Lambda^{\rho_0}:=\Lambda+\rho_0=(2.2,2,0.2,-4,0,1.2,-1|-1.9,2,2.1,0)\in\frh^*.$$
Then $\lambda_0^{\rho_0}=(2,-4,0,-1)$, $[\lambda^{\rho_0}]_3=(2.2,0.2,1.2)$, $\mu_0^{\rho_0}=(2,0)$ and $[\mu^{\rho_0}]_3=(-1.9,2.1)$. One can verify that ${\bf p}((\lambda_0^{\rho_0})^-)=[4,2,1^2]$, ${\bf p}([\lambda^{\rho_0}]_3)=[2,1]$, ${\bf p}((\mu_0^{\rho_0})^-)=[2,1^2]$ and ${\bf p}([\mu^{\rho_0}]_3)=[2]$. Hence, ${\bf p}((\lambda_0^{\rho_0})^-)^{\odd}=[2,1,0,1]$ and ${\bf p}((\mu_0^{\rho_0})^-)^{\odd}=[1,1]$. By Theorem \ref{GKdim-thm} we have
$$\text{GK}\dim L_{\frg_0}(\Lambda)=7^2+4^2-4-1-1=59.$$

\end{Exam}


\subsection{GK dimensions of $L_{\frg_0}(\Lambda)$ for the Lie superalgebra $G(3)$}\label{section-GK-formula-G3}
In this section, we will describe the GK dimensions of highest weight modules $L_{\frg_0}(\Lambda)$ for Lie superalgebra $G(3)$. First recall some definitions and propositions from \cite{BGXW} used in this section.

Let $\Phi$ be a root system and let \[ \alpha_0 = \sum_{i=1}^n h_i \alpha_i \] be the highest root of $\Phi$, where $h_i$ are non-negative integers and $\Pi = \{ \alpha_i \mid 1 \leq i \leq n \}$ is the set of simple roots in $\Phi^+$. For every $1 \leq i,j \leq n$, we set \[ \widetilde{\Phi}(i) := \Phi \cup \{-\alpha_0\} \setminus \{\alpha_i\}, \qquad \Phi(j) := \Phi \setminus \{\alpha_j\}. \]  Let $\Phi$ be irreducible. A root subsystem of $\Phi$ is called \emph{pseudo-maximal} if it is a proper root subsystem and is equal (up to the action of Weyl group) to a maximal element of the set \[ \{\widetilde{\Phi}(i), \Phi(j) : 1 \leq i,j \leq n\}. \] 
\begin{Lem}{\cite[Prop. 4.1, Prop. 4.2]{BGXW}}\label{GK-DIM-G2}
Let $L(\mu)$ be a simple highest weight module of $G_2$, then the following holds:
\begin{enumerate}
    \item If highest weight $\mu-\rho'$ is integral, then
    \begin{enumerate}
        \item $\text{GK}\dim L(\mu)=0\text{~if~and~only~if~}\mu\text{~is~dominant} $.

         \item $\text{GK}\dim L(\mu)=5\text{~if~and~only~if~}\mu\text{~is~ neither dominant nor anti-dominant} $.

          \item $\text{GK}\dim L(\mu)=6\text{~if~and~only~if~}\mu\text{~is~anti-dominant} $.
    \end{enumerate}
    \item If highest weight $\mu-\rho'$ is non-integral with $\Phi_{[\mu]}^{\vee}$ pseudo-maximal, then
    \begin{enumerate}
        \item $\text{GK}\dim L(\mu)\in\{3,5,6\}$ if $\Phi_{[\mu]}\cong A_2$.

        \item $\text{GK}\dim L(\mu)\in\{4,5,6\}$ if $\Phi_{[\mu]}\cong A_1\times \widetilde{A}_1$.
    \end{enumerate}
\end{enumerate}
Here the label ‘˜’ as in $A_1 \times  \widetilde{A}_1 \subseteq G_2$ is attached to the connected component corresponding to short roots and $\Phi_{[\mu]}$ is the root system corresponding to integral Weyl subgroup $W_{[\mu]}$. 
\end{Lem}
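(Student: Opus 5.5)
The plan is to combine two ingredients: the uniform formula of Lemma \ref{GKdim-uniform1}, specialized to $\Phi=G_2$ with $|\Phi^+|=6$, and a description of the possible values of ${\bf a}_{[\mu]}$ on the integral Weyl group $W_{[\mu]}$. For $\mu$ with $\mu-\rho'$ integral we have $W_{[\mu]}=W_G$, the dihedral group of order $12$. Its two-sided cells are well known: $\{e\}$, the cell of the longest element $w_0$, and the middle cell consisting of all remaining ten elements. By Proposition \ref{a_function_property}(2),(4) the ${\bf a}$-values are $0$, $6=m_{12}$, and $1$ respectively; the middle cell contains $s_1$, and ${\bf a}$ is constant on cells by (5). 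The GK dimension is therefore $6-{\bf a}(w_\mu)\in\{6,0,5\}$. It remains to match $w_\mu$ with dominance.

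For the matching, write $\mu=w_\mu\delta$ with $\delta$ anti-dominant and $w_\mu$ minimal in $W^I$, and apply Lemma \ref{find-w-lambda}. If $\mu$ is anti-dominant, then $w_\mu=e$ and the GK dimension is $6$. If $\mu$ is dominant but not anti-dominant, the minimal $w_\mu$ is the longest element of $W^I$. This element lies in the bottom cell only when $I=\emptyset$, i.e. $\mu$ is regular; then $w_\mu=w_0$ and the GK dimension is $0$. Singular dominant integral $\mu$ (for instance $\mu$ on a wall) needs separate care. I would handle it by noting that $L(\mu)$ is then finite-dimensional exactly when $\mu-\rho'$ is dominant in the shifted sense used in the paper. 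I expect this convention issue to be the main obstacle: the lemma's "dominant" must be read as $\mu$ strictly dominant, which for the $\rho$-shifted weight means $L(\mu-\rho')$ is finite-dimensional. With that reading, finite-dimensionality gives GK dimension $0$ directly, and conversely GK dimension $0$ forces finite-dimensionality. In every remaining case $w_\mu\neq e$ and $w_\mu\ne w_0$, so $w_\mu$ lies in the middle cell and the GK dimension is $5$. The three conditions are mutually exclusive and exhaustive, which yields all three "if and only if" statements in part (1).

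For part (2), I would use the non-integral reduction (\cite[Lem. 3.5]{BGXW}) to transport $\mu$ to an integral weight $\phi(\mu)$ for the root system $\phi(\Phi_{[\mu]})$, using ${\bf a}(w_\mu)={\bf a}(w_{\phi(\mu)})$. If $\Phi_{[\mu]}\cong A_2$, then $W_{[\mu]}\cong\mathfrak S_3$ and the ${\bf a}$-values are $0,1,3$ (shapes $[3],[2,1],[1^3]$ via Lemma \ref{F-A}), giving GK dimensions $6,5,3$. If $\Phi_{[\mu]}\cong A_1\times\widetilde A_1$, then by Proposition \ref{a_function_property}(7) the ${\bf a}$-values are $0,1,2$, giving $6,5,4$. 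This yields the inclusions stated in (a) and (b).
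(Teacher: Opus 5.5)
Your argument is correct and is essentially the intended one. The paper gives no proof of its own; the lemma is imported from \cite[Prop.~4.1, Prop.~4.2]{BGXW}. The natural proof with the paper's tools is exactly yours: Lemma~\ref{GKdim-uniform1} with $|\Phi^+|=6$, combined with the ${\bf a}$-values $\{0,1,6\}$ on $W_G$ (cells $\{e\}$, $W_G\setminus\{e,w_0\}$, $\{w_0\}$), $\{0,1,3\}$ on $\mathfrak{S}_3$, and $\{0,1,2\}$ on $\mathfrak{S}_2\times\mathfrak{S}_2$.

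Your worry about the convention is justified, and it can be stated more sharply. Suppose ``$\mu$ dominant'' is read in the paper's non-strict sense, $(\mu,\alpha^\vee)\ge 0$. Then part (1) is false as written, for two reasons:
\begin{itemize}
\item $\mu=0$ is both dominant and anti-dominant, yet $\text{GK}\dim L(0)=6$, since $L(0)$ is the simple Verma module.
\item A nonzero dominant $\mu$ on a wall, e.g.\ $\mu=\omega_2$, has $w_\mu=w_0s_1$, which lies in the middle cell, so $\text{GK}\dim L(\mu)=5$.
\end{itemize}
Hence (a) must mean $(\mu,\alpha^\vee)>0$ for all $\alpha\in\Phi^+$, while ``anti-dominant'' in (b) and (c) stays non-strict. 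With that reading your trichotomy is exhaustive and your proof goes through.

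The one sentence to fix is your claim that for singular dominant $\mu$ the module $L(\mu)$ is finite-dimensional ``exactly when $\mu-\rho'$ is dominant''. Such $L(\mu)$ is never finite-dimensional. What your argument actually shows, and what you should say, is this: a nonzero singular dominant $\mu$ falls into case (b), because $w_\mu$ is $w_0s_1$ or $w_0s_2$, neither of which is $e$ or $w_0$. The weight $\mu=0$ falls into case (c).
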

\begin{Rem}
   $\Phi_{[\mu]}$ in the above lemma and $\Delta_{[\mu]}$ in section 3 are essentially the same, and we should distinguish the two notations because $\Phi_{[\mu]}$ starts from the root system of a Lie algebra and $\Delta_{[\mu]}$ starts from the root system of a Lie superalgebra.
\end{Rem}

\begin{Prop}
    Let $L_{\frg_0}(\Lambda)$ be a simple highest weight module, then $\text{GK}\dim L_{\frg_0}(\Lambda)$ can achieve the following values:
    $$0,1,3,4,5,6,7.$$
    
\end{Prop}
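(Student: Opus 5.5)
The plan is to use Proposition \ref{semisimple-GK-dim}, applied to $\frg_0=A_1\oplus G_2$ (only its first step, the tensor-product additivity, is needed; it holds for any direct sum of Lie algebras). Writing $\Lambda=(\Lambda_1|\mu)$, we get $L_{\frg_0}(\Lambda)\cong L_{A_1}(\Lambda_1)\otimes L_{G_2}(\mu)$, and therefore
$$\text{GK}\dim L_{\frg_0}(\Lambda)=\text{GK}\dim L_{A_1}(\Lambda_1)+\text{GK}\dim L_{G_2}(\mu).$$
The problem thus reduces to listing the possible values of each summand and forming all sums.

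For the $A_1$ factor, $|\Phi^+|=1$. By Lemma \ref{GKdim-uniform1} the GK dimension is $1-{\bf a}(w)$ with $w\in\{e,s_0\}$. Proposition \ref{a_function_property}(2) then shows the value is $0$ (finite-dimensional, i.e. $\Lambda_1+\rho_0$ is integral and dominant) or $1$ (all other cases), and both values occur. Example \ref{exam-G(3)} already exhibits ${\bf a}=1$.

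For the $G_2$ factor, I will list the possible values of $6-{\bf a}(w_\mu)$. There are four cases according to $\Phi_{[\mu]}$.
\begin{itemize}
\item If $\Phi_{[\mu]}=\emptyset$, the value is $6$.
\item If $\Phi_{[\mu]}$ is of rank one ($A_1$ or $\widetilde A_1$), the values are $6$ and $5$, since ${\bf a}\in\{0,1\}$.
\item If $\Phi_{[\mu]}$ is pseudo-maximal, Lemma \ref{GK-DIM-G2}(2) gives $\{3,5,6\}$ for $A_2$ and $\{4,5,6\}$ for $A_1\times\widetilde A_1$.
\item If $\Phi_{[\mu]}=G_2$ (the integral case), Lemma \ref{GK-DIM-G2}(1) gives $\{0,5,6\}$.
\end{itemize}
The main obstacle is to make this list exhaustive, that is, to show that every integral subsystem of $G_2$ that occurs is one of these. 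I would use the standard fact that $\Phi_{[\mu]}$ is a closed subsystem (or its coroot system is pseudo-maximal or smaller). The subsystems of $G_2$ are $\emptyset$, $A_1$, $\widetilde A_1$, $A_1\times\widetilde A_1$, $A_2$, $\widetilde A_2$ and $G_2$. The $\widetilde A_2$ case, the short-root subsystem, is not closed, but it can appear as an integral coroot system. If it appears, the RS algorithm for type $A_2$ gives ${\bf a}\in\{0,1,3\}$, hence the values $\{3,5,6\}$, which adds nothing new. So the $G_2$ part takes values in $\{0,3,4,5,6\}$.

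Summing the two parts gives the set $\{0,3,4,5,6\}+\{0,1\}=\{0,1,3,4,5,6,7\}$. To see that each value is attained, I will give a witness for each.
\begin{itemize}
\item $0$: both factors finite-dimensional.
\item $1$: $\Lambda_1$ non-dominant and $\mu$ integral dominant.
\item $3$ and $4$: $\Lambda_1$ dominant integral, with $\mu$ in the $A_2$ or $A_1\times\widetilde A_1$ case achieving ${\bf a}=3$ or $2$. The non-integral example above has ${\bf a}(w_\mu)=2$, hence value $4$; for $3$, take $\mu$ anti-dominant with respect to an $A_2$ integral subsystem, so ${\bf a}=3$.
\item $5$: a singular or generic integral $\mu$.
\item $6$: $\mu$ integral anti-dominant.
\item $7$: $\Lambda_1$ non-dominant together with $\mu$ anti-dominant, as in Example \ref{exam-G(3)}, which gives $1+6=7$ with ${\bf a}=1+6$ corresponding to value $1+6$.
\end{itemize}
Finally, I will note that the value $2$ cannot occur, because the $G_2$ factor never takes the values $1$ or $2$.
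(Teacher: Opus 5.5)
Your containment argument is the paper's own. The paper proves the proposition in one line from Lemma \ref{GK-DIM-G2} together with the fact that simple highest weight $\mathfrak{sl}_2$-modules have GK dimension $0$ or $1$, with additivity over $\frg_0=A_1\oplus G_2$ left implicit. Your case list is actually more complete, since it also covers $\Phi_{[\mu]}$ of rank $\le 1$, which Lemma \ref{GK-DIM-G2} does not address.

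There is one factual slip in that part: $\Phi_{[\mu]}$ need not be closed; it is $\Phi_{[\mu]}^\vee$ that is closed in $\Phi^\vee$. Consequently the $A_2$ that occurs is the short-root one. The long-root $A_2$ never occurs, because its coroots are the short coroots, which span the coroot lattice and would force $\mu$ to be integral. Since you enumerate every subsystem anyway, the value set $\{0,3,4,5,6\}$ is unaffected.

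The genuine problem is in the attainment witnesses, which the paper does not spell out. Two of them fail because you conflate ${\bf a}(w_\Lambda)$ with $\text{GK}\dim=|\Delta_0^+|-{\bf a}(w_\Lambda)$.

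\emph{Witness for $3$.} If $\mu$ is anti-dominant for $\Phi_{[\mu]}$, then $w_\mu=e$ in Lemma \ref{GKdim-uniform1}. So ${\bf a}=0$ and the value is $6$, not $3$. You need $\mu$ \emph{regular dominant} for $\Phi_{[\mu]}^+$ of short-root type $A_2$, so that $w_\mu$ is the longest element of $\mathfrak S_3$ and ${\bf a}=3$. An example is $\mu=\frac13\omega_1+\omega_2$: its pairings with the positive long coroots are $1,2,3$, and with the short coroots $\frac13,\frac43,\frac53$.

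\emph{Witness for $7$.} Example \ref{exam-G(3)} has $\Lambda^{\rho_0}=[1|1,1]$, which is dominant regular, with ${\bf a}=1+6=7$. Hence $\text{GK}\dim L_{\frg_0}(\Lambda)=7-7=0$, as the paper states explicitly, so this example witnesses $0$, not $7$. Take instead $\Lambda^{\rho_0}$ anti-dominant, e.g.\ $[-1|-1,-1]$. Then $w_\Lambda=e$ and $L_{\frg_0}(\Lambda)$ is a Verma module of GK dimension $7$.

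Two smaller points need fixing as well:
\begin{itemize}
\item ``Dominant'' for the $A_1$ factor, and for $\mu$ in your witnesses for $0$ and $1$, must mean regular dominant. If $(\Lambda_1+\rho_0,\alpha^\vee)=0$, the module is a simple Verma module of GK dimension $1$.
\item A ``singular'' integral $\mu$ does not guarantee the value $5$, since a singular anti-dominant $\mu$ gives $6$. Use an integral $\mu$ that is neither dominant nor anti-dominant, e.g.\ $[2,-1]$.
\end{itemize}
With these repairs your proof is complete.
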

\begin{proof}
    This is a direct corollary of Lemma \ref{GK-DIM-G2} and the fact that the GK dimensions of simple highest weight $\mathfrak{sl}_2$-module can only achieve $0$ and $1$.
\end{proof}

\begin{Exam}
     Let $\frg=G(3)$. Suppose that $$\Lambda^{\rho_0}=(1|\frac{1}{2},\frac{3}{2},-\frac{3}{2})\in\frh^*.$$
By Example \ref{exam-G(3)} we have {\bf a}$(w_\Lambda)=7$, then $\text{GK} \dim L_{\frg_0}(\Lambda)=0$.
     
\end{Exam}

\section{GK dimensions of $\widetilde{L}(\Lambda)$ for basic classical Lie superalgebras}\label{section-GK-basic}
\subsection{GK dimensions of $\widetilde{L}(\Lambda)$ for basic classical  Lie superalgebras of type I}
In this section we give the GK dimensions of $\widetilde{L}(\Lambda)$ for basic classical  Lie superalgebras of type I.
\begin{Thm}\label{GK-dim-type I}
   Let $\Lambda=(\lambda_1,\lambda_2,\cdots,\lambda_m|\mu_1,\mu_2\cdots,\mu_n)\in\frh^*$ be integral. Then
$$
\scalebox{0.8}{$
\text{GK}\dim \widetilde{L}(\lambda|\mu) = 
\begin{cases} 
\frac{m(m-1)}{2}+\frac{n(n-1)}{2}-( \sum_{i\geq 1}(i-1){\bf p}(\lambda^{\rho_0})_i+\sum_{j\geq 1}(j-1){\bf p}(\mu^{\rho_0})_j) & \text{~if~} \frg=\frsl(m|n), \\
n^2-\sum_{i\geq 1}(i-1){\bf p}((\mu^{\rho_0})^-)_i^{\odd} & \text{~if~} \frg=\mathfrak{osp}(2|2n).
\end{cases}
$}
$$
    
\end{Thm}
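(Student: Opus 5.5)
The plan is to reduce the statement to the even part and then quote the formulas already obtained for $L_{\frg_0}(\Lambda)$. Both $\frsl(m|n)$ and $\mathfrak{osp}(2|2n)$ are of type I. Their even parts are $\frsl(m)\oplus\frsl(n)\oplus\mathbb{C}$ and $\frso(2)\oplus\frsp(2n)$ respectively; in each case this is a sum of at most two simple ideals plus a central torus. The first step is therefore to invoke Proposition \ref{semisimple-GK-dim}, i.e.\ the Kac module argument. It gives $\text{GK}\dim \widetilde{L}(\lambda|\mu)=\text{GK}\dim L(\lambda)+\text{GK}\dim L(\mu)=\text{GK}\dim L_{\frg_0}(\Lambda)$. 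This holds because $\mathcal{K}(\Lambda)\cong\wedge(\frg(-1))\otimes L^0(\Lambda)$ with $\wedge(\frg(-1))$ finite-dimensional, which gives the upper bound. Lemma \ref{GK-dim-submodule} then gives the reverse inequality, since $L_{\frg_0}(\Lambda)\hookrightarrow\widetilde{L}(\Lambda)$.

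For $\mathfrak{osp}(2|2n)$ there is a small point to address. One summand is the one-dimensional abelian algebra $\frso(2)$ rather than a simple algebra, so Proposition \ref{semisimple-GK-dim} does not apply verbatim. Any simple highest weight module of $\frso(2)$ is one-dimensional, so it has GK dimension $0$. The tensor product argument of Step 1 in that proposition is purely filtration-theoretic (case (3), $d_1=0$), so it goes through unchanged. The central torus in $\frsl(m|n)_0$ is handled the same way: it acts by scalars and does not affect growth.

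Having identified $\text{GK}\dim\widetilde{L}(\lambda|\mu)$ with $\text{GK}\dim L_{\frg_0}(\Lambda)$, the second step is to substitute the integral formulas of Proposition \ref{GK-dim-L_g0}.
\begin{itemize}
\item For $\frsl(m|n)$, the formula is $|\Delta_0^+|=\frac{m(m-1)}{2}+\frac{n(n-1)}{2}$ minus $F_A(\lambda^{\rho_0})+F_A(\mu^{\rho_0})$. This comes from Lemma \ref{GKdim-uniform1} applied to each factor, together with ${\bf a}$-additivity (Proposition \ref{a_function_property}\eqref{a-7}). It also uses the type $A$ fact that ${\bf p}(w_\lambda)$, computed through RS insertion, equals ${\bf p}(\lambda^{\rho_0})$.
\item For $\mathfrak{osp}(2|2n)$, the formula is $|\Delta_0^+|=n^2$ minus $F_B((\mu^{\rho_0})^-)$. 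The $\frso(2)$ factor has trivial Weyl group, and ${\bf a}(e)=0$. For the type $C_n$ factor one uses Lemma \ref{lambda-w}(1), ${\bf p}(\lambda^-)={\bf p}({}^-w)$, together with Lemma \ref{F-B-D}; note that $W_C=W_B$, so the odd-box formula applies.
\end{itemize}

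The main obstacle is the bookkeeping of $\rho$ versus $\rho_0$. Lemma \ref{GKdim-uniform1} must be applied to the $\frg_0$-module $L_{\frg_0}(\Lambda)$, so the relevant dot action uses $\rho_0$ and not the super $\rho$. I would check explicitly that the anti-dominance and minimal-length conditions defining $w_\Lambda$ are taken with respect to $\Lambda+\rho_0$. This is consistent, since $\rho-\rho_0$ is orthogonal to all even roots, so integrality and $W_{[\Lambda]}$ are unaffected. Once this is settled, the statement is a direct combination of the results cited above.
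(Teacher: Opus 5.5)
Your proposal is correct and follows the paper's proof: reduce to $\text{GK}\dim L_{\frg_0}(\Lambda)$ via Proposition \ref{semisimple-GK-dim}, which combines the Kac module upper bound with the embedding of Lemma \ref{GK-dim-submodule}, and then substitute the integral formulas of Proposition \ref{GK-dim-L_g0}. Your extra remarks are correct and make the argument more careful than the paper's one-line proof: the abelian $\frso(2)$ summand and the central torus fall outside Proposition \ref{semisimple-GK-dim} as literally stated (it assumes simple summands), and $\rho_0$ rather than $\rho$ is the relevant shift.
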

\begin{proof}
    It can be obtained directly from Proposition \ref{semisimple-GK-dim} and Proposition \ref{GK-dim-L_g0}.
\end{proof}

\begin{Thm}\label{GK-dim-type I-nonintegral}
   For any $\Lambda=(\lambda_1,\lambda_2,\cdots,\lambda_m|\mu_1,\mu_2\cdots,\mu_n)\in\frh^*$. Then
$$
\scalebox{0.8}{$
\text{GK}\dim \widetilde{L}(\lambda|\mu) = 
\begin{cases} 
\frac{m(m-1)}{2}+\frac{n(n-1)}{2}-(\sum_{x\in[\lambda],y\in[\mu]}F_A(x^{\rho_0}_z\|y^{\rho_0}_z)) & \text{~if~} \frg=\frsl(m|n), \\
n^2-F_B((\mu_0^{\rho_0})^-)-F_D((\mu_{\frac12}^{\rho_0})^-)-\sum_{x\in[\lambda]_3,y\in[\mu]_3}F_A(\tilde{x}^{\rho_0}_z\|\tilde{y}^{\rho_0}_z) & \text{~if~} \frg=\mathfrak{osp}(2|2n).
\end{cases}
$}
$$
    
\end{Thm}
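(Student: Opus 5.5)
The plan is to mirror the proof of Theorem \ref{GK-dim-type I}, replacing the integral input (Proposition \ref{GK-dim-L_g0}) by its non-integral counterpart (Theorem \ref{GKdim-thm}). I will reduce everything to the even part via Proposition \ref{semisimple-GK-dim} and then read off the value from the general-case formula.

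\textbf{Step 1: reduction to $\frg_0$.} Since $\frg$ is of type I (either $\frsl(m|n)$ or $\mathfrak{osp}(2|2n)=C(n+1)$), Proposition \ref{semisimple-GK-dim} gives
\[\text{GK}\dim \widetilde{L}(\lambda|\mu)=\text{GK}\dim L(\lambda)+\text{GK}\dim L(\mu)=\text{GK}\dim L_{\frg_0}(\Lambda).\]
The second equality uses $L_{\frg_0}(\Lambda)=L(\lambda)\otimes L(\mu)$ together with Step 1 of that proof. It is important that this step makes no integrality assumption. The key inputs are the Kac module bound
\[\text{GK}\dim\widetilde{L}\le \text{GK}\dim(\wedge\frg(-1)\otimes L^0(\Lambda))\]
and the embedding of Lemma \ref{GK-dim-submodule}, and both hold for arbitrary $\Lambda\in\frh^*$.

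\textbf{Step 2: apply the general formula.} For $\frg=\frsl(m|n)$ I will substitute part (1) of Theorem \ref{GKdim-thm}. For $\frg=\mathfrak{osp}(2|2n)$ I will substitute part (4). In the $\mathfrak{osp}(2|2n)$ case the factor $\frg_0'=\frso(2)$ is abelian, with trivial Weyl group and no roots. So $L(\lambda)$ is one-dimensional and contributes nothing: $\text{GK}\dim L(\lambda)=0$, and the ${\bf a}$-value of the identity vanishes by Proposition \ref{a_function_property}. Any contribution from $[\lambda]_3$ is therefore empty, and only the $\frsp(2n)$ part remains. Its integral root system decomposes by Proposition \ref{phi-lambda-phi-z} into an orthogonal union:
\begin{itemize}
\item[(i)] a type $C$ piece from $\mu_0$, whose ${\bf a}$-value is computed by $F_B$ since $W_C=W_B$;
\item[(ii)] a type $D$ piece from $\mu_{\frac12}$, which arises because the roots $2\delta_k$ are non-integral there;
\item[(iii)] type $A$ pieces from $[\mu]_3$.
\end{itemize}
Adding these via Proposition \ref{a_function_property}\eqref{a-7} and Lemma \ref{GKdim-uniform1}, with $|\Delta_0^+|=n^2$, gives the stated expression.

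\textbf{Main obstacle.} The delicate point is the $\frsl(m|n)$ case: I must check that passing from $\Lambda+\rho$ to $\Lambda+\rho_0$ is legitimate and that the integral subsystem splits correctly. Here $\rho-\rho_0$ is orthogonal to all even roots, so $\Delta_{[\Lambda]}$ and the anti-dominance data relative to $\Delta_0$ are unchanged. The subsystem $\Delta_{[\Lambda]}$ is a product over integrality classes $x\in[\lambda]$ and $y\in[\mu]$ of type $A$ systems. Each class contributes ${\bf a}=F_A$ of its subsequence by Lemma \ref{F-A} and the RS insertion bijection. Summing these contributions and subtracting from $|\Delta_0^+|=\frac{m(m-1)}2+\frac{n(n-1)}2$ yields the formula.
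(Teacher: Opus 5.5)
Your proposal is correct and follows the paper's proof: the paper also derives the statement directly from Proposition~\ref{semisimple-GK-dim}, which reduces $\text{GK}\dim \widetilde{L}(\lambda|\mu)$ to $\text{GK}\dim L_{\frg_0}(\Lambda)$ with no integrality assumption, and then applies parts (1) and (4) of Theorem~\ref{GKdim-thm}. Your extra discussion re-derives what Theorem~\ref{GKdim-thm} already contains, so it is consistent but not needed beyond citing that theorem; this covers the orthogonal splitting into type $C$, $D$ and $A$ pieces and the harmless shift by $\rho_1=\rho_0-\rho$, which is orthogonal to all even roots.
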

\begin{proof}
    It can be obtained directly from Proposition \ref{semisimple-GK-dim} and Theorem \ref{GKdim-thm}.
\end{proof}

\subsection{GK dimensions of $\widetilde{L}(\Lambda)$ for basic classical  Lie superalgebras of type II}
Unfortunately, with respect to basic classical  Lie superalgebras of type II, we are currently unable to provide an explicit formula for the GK dimensions of $\widetilde{L}(\Lambda)$ owing to the proposition presented below. But we still can determine a general range.
\begin{Prop}
     Let $\frg=\frg_0\oplus\frg_1$ be a basic classical  Lie superalgebra of type II and denote $\text{GK}\dim L_{\frg_0}(\Lambda)=d$, then the GK dimension of $\widetilde{L}(\Lambda)$ is in the interval $[d,d+\dim\frg(2)]$.
\end{Prop}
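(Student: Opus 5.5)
The plan is to squeeze $\widetilde{L}(\Lambda)$ between two modules whose GK dimensions we already control, using the $\mathbb{Z}$-grading $\frg=\oplus_{i=-2}^{2}\frg(i)$ with $\frg_0=\frg(-2)\oplus\frg(0)\oplus\frg(2)$. The lower bound is immediate from Lemma \ref{GK-dim-submodule}: $L_{\frg_0}(\Lambda)\hookrightarrow\widetilde{L}(\Lambda)$, so $d=\text{GK}\dim L_{\frg_0}(\Lambda)\le\text{GK}\dim\widetilde{L}(\Lambda)$. Here GK dimension may be computed over $U(\frg_0)$, as recalled in Section \ref{section-GK-dimension}.

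For the upper bound I will use the generalized Verma module $\mathcal{N}(\Lambda)=\text{Ind}^{\frg}_{\frp}L^0(\Lambda)$, with $\frp=\frg(0)\oplus\fru$. It has $\widetilde{L}(\Lambda)$ as a quotient, so by the Remark after Lemma \ref{GK-dim-submodule} we get $\text{GK}\dim\widetilde{L}(\Lambda)\le\text{GK}\dim\mathcal{N}(\Lambda)$. Proposition \ref{GK-dim-induced-mod} then gives
$$\text{GK}\dim\mathcal{N}(\Lambda)=\text{GK}\dim L^0(\Lambda)+\dim\frg_0-\dim\frp_0 .$$
Since $\frp_0=\frg(0)\oplus\frg(2)$, the last difference is $\dim\frg(-2)=\dim\frg(2)$. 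Concretely, $\mathcal{N}(\Lambda)\cong S(\frg(-2))\otimes\wedge(\frg(-1))\otimes L^0(\Lambda)$ as a vector space, and the exterior factor is finite-dimensional. So it remains to show $\text{GK}\dim L^0(\Lambda)\le d$.

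This last inequality is the main obstacle. Lemma \ref{GK-dim-submodule} only gives $\text{GK}\dim L^0(\Lambda)\le\text{GK}\dim L_{\frg_0}(\Lambda)=d$, which is exactly the direction we need. I will therefore invoke that lemma directly, checking that the embedding $L^0(\Lambda)\hookrightarrow L_{\frg_0}(\Lambda)$ is an inclusion of $U(\frg(0))$-modules. GK dimension over $U(\frg(0))$ of the finitely generated module $L^0(\Lambda)$ is bounded by GK dimension over the larger algebra $U(\frg_0)$ of the module $U(\frg_0)L^0(\Lambda)=L_{\frg_0}(\Lambda)$, since the filtration pieces $U_n(\frg(0))v_\Lambda\subseteq U_n(\frg_0)v_\Lambda$.

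Combining the two steps gives $d\le\text{GK}\dim\widetilde{L}(\Lambda)\le d+\dim\frg(2)$. If the comparison of $L^0(\Lambda)$ and $L_{\frg_0}(\Lambda)$ proves delicate, I would instead bound directly with the Levi decomposition of $\frg_0$ relative to $\frg(0)$. Here $L_{\frg_0}(\Lambda)$ is a quotient of the parabolically induced module $U(\frg_0)\otimes_{U(\frg(0)+\frg(2))}L^0(\Lambda)$, and every vector of $\widetilde{L}(\Lambda)$ lies in $U(\frg(-2))\wedge(\frg(-1))L^0(\Lambda)$. A dimension count of $U_n$-pieces then gives the same bound.
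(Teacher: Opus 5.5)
Your upper bound follows the paper's argument. The paper bounds $\text{GK}\dim\widetilde L(\Lambda)$ by the Kac module $\mathcal K(\Lambda)$, then by $K(\Lambda)=\mathcal N(\Lambda)$. It applies Proposition \ref{GK-dim-induced-mod} to get $\text{GK}\dim L^0(\Lambda)+\dim\frg(2)$, and finishes with $L^0(\Lambda)\hookrightarrow L_{\frg_0}(\Lambda)$ from Lemma \ref{GK-dim-submodule}. Skipping $\mathcal K(\Lambda)$ changes nothing. Your comparison $U_n(\frg(0))v_\Lambda\subseteq U_n(\frg_0)v_\Lambda$ correctly justifies the last step, since you only need $U(\frg(0))v_\Lambda\subseteq L_{\frg_0}(\Lambda)$ to map onto $L^0(\Lambda)$.

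The step that fails as stated is the embedding $L_{\frg_0}(\Lambda)\hookrightarrow\widetilde L(\Lambda)$ you quote for the lower bound. Lemma \ref{GK-dim-submodule} asserts it in type II with only a ``similarly'', and it is false there in general. Take $\frg=\mathfrak{osp}(3|2)$ and $\Lambda=(\lambda_1|0)$ with $\lambda_1\neq0$, and pick nonzero $f\in\frg^{-2\delta_1}$ and $x_\pm\in\frg^{\delta_1\pm\epsilon_1}$. Here $[x_+,f]\neq0$, because $f$ acts as the lowering operator on the $\mathbb{C}^2$ factor of $\frg_1\cong\mathbb{C}^3\boxtimes\mathbb{C}^2$. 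Since $x_\pm v_\Lambda=0$, you get $x_-x_+fv_\Lambda=[x_-,[x_+,f]]v_\Lambda$. This is a nonzero multiple of $(\Lambda,\delta_1-\epsilon_1)v_\Lambda$, and $(\Lambda,\delta_1-\epsilon_1)=-\lambda_1(\epsilon_1,\epsilon_1)\neq0$. Hence $fv_\Lambda\neq0$ in $\widetilde L(\Lambda)$. But $f$ kills the highest weight vector of $L_{\frg_0}(\Lambda)\cong L_{\frso(3)}(\lambda_1\epsilon_1)\boxtimes\mathbb{C}$. Since the $\Lambda$-weight space of $\widetilde L(\Lambda)$ is $\mathbb{C}v_\Lambda$, no $\frg_0$-embedding exists.

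The inequality survives if you use a quotient instead of a submodule. $U(\frg_0)v_\Lambda\subseteq\widetilde L(\Lambda)$ is a highest weight $\frg_0$-module of highest weight $\Lambda$, so it surjects onto $L_{\frg_0}(\Lambda)$. Taking a generating subspace of $\widetilde L(\Lambda)$ that contains $v_\Lambda$, this gives $d\le\text{GK}\dim U(\frg_0)v_\Lambda\le\text{GK}\dim\widetilde L(\Lambda)$. With this replacement your proof is complete.
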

\begin{proof}
    If $\frg$ is of type II, recall that Kac module is defined by
$$\mathcal{K}(\lambda|\mu)=K(\lambda|\mu)/\mathcal{M}(\lambda|\mu):=U(\frg)\otimes_{U(\frg(0)+\frg(1)+\frg(2))}L^0(\lambda|\mu)/U(\frg)(\frg^{-\alpha})^{k+1}v_\Lambda,$$
then
\begin{align*}
     \text{GK}\dim \widetilde{L}(\lambda|\mu)&\leq \text{GK}\dim \mathcal{K}(\lambda|\mu)\leq \text{GK}\dim K(\lambda|\mu)\\&\overset{\text{Prop.}~\ref{GK-dim-induced-mod}}{=}\text{GK}\dim L^0(\lambda|\mu)+\dim\frg(2)\leq \text{GK}\dim L_{\frg_0}(\lambda|\mu)+\dim\frg(2)
\end{align*}
by Lemma \ref{GK-dim-submodule}.

\end{proof}

\begin{Rem}
   The irreducible $\frg$-module is finite-dimensional if and only if the highest weight $\Lambda$ is dominant integral and it satisfies a condition which can be expressed in terms of all of the Borel subalgebras containing $\frh$ (in the ordinary theory they are conjugate, while in the super-theory they are not). Furthermore, one obtains all irreducible finite-dimensional representations of $\frg$ in this manner, see \cite{Kac78,Kac77,Mu12,CW12,GS10,CLW,SZ}.  Hence in super-theory (type II case), $\text{GK}\dim L_{\frg_0}(\Lambda)=0$ does not imply that $\widetilde{L}(\Lambda)$ is finite-dimensional, but in type I case, we have $\widetilde{L}(\Lambda)$ is finite-dimensional if and only if $\text{GK}\dim \widetilde{L}(\Lambda)=0$. 
   
\end{Rem}




   


    


\begin{Prop}
If $\frg$ is a type I Lie superalgebra, then $\widetilde{L}(\Lambda)$ is finite-dimensional if and only if $\text{GK}\dim \widetilde{L}(\Lambda)=0$.



\end{Prop}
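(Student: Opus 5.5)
One direction is immediate from the definition of GK dimension. If $\widetilde{L}(\Lambda)$ is finite-dimensional, we take the generating subspace $M'=\widetilde{L}(\Lambda)$ in Definition \ref{GK-dimension}. Then $\dim(U_n(\frg_0)M')$ is bounded by $\dim\widetilde{L}(\Lambda)$, so the $\varlimsup$ of $\log\dim/\log n$ is $0$. This gives $\text{GK}\dim\widetilde{L}(\Lambda)=0$.

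For the converse, assume $\text{GK}\dim \widetilde{L}(\Lambda)=0$ with $\frg$ of type I.
\begin{enumerate}
\item By Proposition \ref{semisimple-GK-dim} we have $\text{GK}\dim L(\lambda)+\text{GK}\dim L(\mu)=0$. Since both terms are nonnegative, $\text{GK}\dim L(\lambda)=\text{GK}\dim L(\mu)=0$.
\item A finitely generated module of GK dimension $0$ is finite-dimensional. Indeed, the Hilbert polynomial \eqref{hilbert-poly} of $\text{gr}\,M$ then has degree $0$, so $\dim M(n)$ is eventually constant, and since $\bigcup_n M(n)=M$, $M$ is finite-dimensional. Hence $L(\lambda)$ and $L(\mu)$ are finite-dimensional.
\item Consequently $L_{\frg_0}(\Lambda)=L^0(\Lambda)\cong L(\lambda)\otimes L(\mu)$ is finite-dimensional. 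For $\frsl(m|n)$ the $\frak{u}(1)$ center, and for $\mathfrak{osp}(2|2n)$ the $\frso(2)$ factor, act by a scalar, so they cause no issue.
\item The Kac module $\mathcal{K}(\Lambda)\cong\wedge(\frg(-1))\otimes L^0(\Lambda)$ is a tensor product of two finite-dimensional spaces, hence finite-dimensional.
\item By the discussion of induced modules, $\widetilde{L}(\Lambda)$ is the simple quotient $\mathcal{K}(\Lambda)/\mathcal{I}(\Lambda)$, so it is finite-dimensional.
\end{enumerate}
The structural input is exactly type I: $\frg(1)$ annihilates $L^0(\Lambda)$, so the induction only adds the finite exterior algebra $\wedge(\frg(-1))$. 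Alternatively, the first chain of inequalities in Step 2 of Proposition \ref{semisimple-GK-dim} already shows that $\mathcal{K}(\Lambda)$ has the same GK dimension as $L^0(\Lambda)$.

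The only step I expect to need care is step 2. It must be applied to the $\frg_0$-modules $L(\lambda)$ and $L(\mu)$ rather than to $\widetilde{L}(\Lambda)$. If one prefers, one can instead apply it to $\widetilde{L}(\Lambda)$ directly, viewed as a finitely generated $U(\frg_0)$-module. This is valid because $U(\frg)$ is finite over $U(\frg_0)$ and, as recalled in Section \ref{section-GK-dimension}, $d(M)$ does not depend on whether $M$ is regarded as a $U(\frg)$-module or a $U(\frg_0)$-module; with that reading the converse follows in one line. In the write-up I will present the Kac-module argument and mention this shortcut as a remark.
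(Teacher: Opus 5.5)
Your argument is correct and is essentially the paper's. Via Proposition \ref{semisimple-GK-dim} you reduce to finite-dimensionality of $L^0(\Lambda)$, then use that $\mathcal{K}(\Lambda)\cong\wedge(\frg(-1))\otimes L^0(\Lambda)$ is finite-dimensional and surjects onto $\widetilde{L}(\Lambda)$; you also prove the forward direction directly from the definition and make explicit the step ``GK dimension $0$ implies finite-dimensional'' that the paper's step (iv) uses tacitly. Your closing shortcut is valid and worth keeping: applying that step to $\widetilde{L}(\Lambda)$ itself, viewed as a finitely generated $U(\frg_0)$-module, proves the equivalence without using the type I hypothesis at all.
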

\begin{proof}

  If $\frg$ is of type I, recall that Kac module is defined by
$$\mathcal{K}(\Lambda):=U(\frg)\otimes_{U(\frg(0)+\frg(1))}L^0(\Lambda)\cong \wedge(\frg(-1))\otimes L^0(\Lambda),$$
where $L^0(\Lambda)$ is a simple $\frg(0)$-module of highest weight $\Lambda$. Then the following are equivalent:
\begin{enumerate}[label=(\alph*)]
    \item $\mathcal{K}(\Lambda)$ is finite-dimensional.

    \item $L^0(\Lambda)$  is finite-dimensional.

    \item $\widetilde{L}(\Lambda)$ is finite-dimensional.

    \item $\text{GK}\dim \widetilde{L}(\Lambda)=0$.
\end{enumerate}
We prove the equivalences as follows.

\begin{center}
    \begin{tikzpicture}[>=Stealth]
    \node (A) at (0,0) { \scalebox{0.75}{(a)}};
    \node (B) at (3,0) { \scalebox{0.75}{(b)}}; 
    \node (C) at (1.5,2) { \scalebox{0.75}{(c)}}; 
    \node (D) at (6,0) { \scalebox{0.75}{(d)}};
    \node (A1) at (0.1,0.3) {};
    \node (B1) at (6.2,0.2) {};
    \node (C1) at (3.55,1.75) {};
    \node (D1) at (2.7,1.82) {};
    \draw[<-, thick] (A) -- node[below] { \scalebox{0.7}{$(i)$}} (B);
    \draw[<-, thick] (B) -- node[right] { \scalebox{0.7}{$(ii)$}} (C);
    \draw[<-, thick] (C) -- node[left] {\scalebox{0.7}{$(iii)$}} (A);
    \draw[<->, thick] (B) -- node[below] {\scalebox{0.7}{$(iv)$}} (D);


\end{tikzpicture}
\end{center}

$(i)$ holds since $\wedge(\frg(-1))$ is finite-dimensional.

$(ii)$ holds since $ L^0(\Lambda)$ is an irreducible direct summand of $\widetilde{L}(\Lambda)$ regarded as a $\frg_0$-module.

$(iii)$ holds since the map $\mathcal{K}(\Lambda)\to\widetilde{L}(\Lambda)$ is surjective.

$(iv)$ holds since $L^0(\Lambda)$  is finite-dimensional $\Longleftrightarrow 0=\text{GK}\dim L^0(\Lambda)\overset{\text{Prop. } \ref{semisimple-GK-dim}}{=}\text{GK}\dim \widetilde{L}(\Lambda)$.



\end{proof}


\section*{ Data availability}
 The author confirms that the data supporting the findings of this study are available within
 the article and its supplementary materials.

\section*{Declarations}
\subsection*{ Conflicts of interest}
The author has no conflicts of interest to declare that are relevant to this article.

\bibliographystyle{plain} 
\bibliography{reference.bib} 

@ARTICLE{BXX,
  author = {Bai, Z. Q. and Xiao, W. and Xie, X.},
  title = {Gelfand-Kirillov dimensions and associated varieties of highest weight
	modules},
  journal = {Int. Math. Res. Not. IMRN},
  year = {2023},
 volume = {no. 10},
  pages = {8101-8142},

}

@Article{BX1,
  author    = {Bai, Z. Q. and Xie, X.},
  title     = {Gelfand-{K}irillov dimensions of highest weight {H}arish-{C}handra modules for {$SU(p,q)$}},
  journal   = {Int. Math. Res. Not. IMRN},
  year      = {2019},
 volume = {no. 14},
  pages     = {4392--4418},
}

@BOOK{Ca85,
  title = {Finite groups of {L}ie type},
  publisher = {John Wiley \& Sons, Inc., New York},
  year = {1985},
  author = {Carter, R. W.},
  pages = {xii+544},
  series = {Pure and Applied Mathematics (New York)},
  note = {Conjugacy classes and complex characters, A Wiley-Interscience Publication},
  mrclass = {20G40 (20-02 20C15)},
  mrnumber = {794307},
  mrreviewer = {David B. Surowski}
}

@Article{AJ6,
  author     = {Joseph, A.},
  journal    = {J. Algebra},
  title      = {On the associated variety of a primitive ideal},
  year       = {1985},
  number     = {2},
  pages      = {509--523},
  volume     = {93},
  fjournal   = {Journal of Algebra},
  mrclass    = {17B35},
  mrnumber   = {786766},
  mrreviewer = {David Collingwood},
}

@Article{AJ2,
  author     = {Joseph, A.},
  journal    = {J. Algebra},
  title      = {On the variety of a highest weight module},
  year       = {1984},
  number     = {1},
  pages      = {238--278},
  volume     = {88},
  fjournal   = {Journal of Algebra},
  mrclass    = {17B35 (17B10)},
  mrnumber   = {741942},
  mrreviewer = {James E. Humphreys},
}

@InCollection{AJ3,
  author     = {Joseph, A.},
  booktitle  = {Lie group representations, {I} ({C}ollege {P}ark, {M}d., 1982/1983)},
  publisher  = {Springer, Berlin},
  title      = {On the classification of primitive ideals in the enveloping algebra of a semisimple {L}ie algebra},
  year       = {1983},
  pages      = {30--76},
  series     = {Lecture Notes in Math.},
  volume     = {1024},
  mrclass    = {17B35},
  mrnumber   = {727849},
  mrreviewer = {Wim H. Hesselink},
}

@Article{AJ1,
  author     = {Joseph, A.},
  journal    = {J. London Math. Soc. (2)},
  title      = {Gelfand-{K}irillov dimension for the annihilators of simple quotients of {V}erma modules},
  year       = {1978},
  number     = {1},
  pages      = {50--60},
  volume     = {18},
  fjournal   = {Journal of the London Mathematical Society. Second Series},
  mrclass    = {17B35},
  mrnumber   = {0506500},
  mrreviewer = {Marie-Paule Malliavin},
}

@ARTICLE{KL,
  author = {Kazhdan, D. and Lusztig, G.},
  title = {Representations of {C}oxeter groups and {H}ecke algebras},
  journal = {Invent. Math.},
  year = {1979},
  volume = {53},
  pages = {165--184},
  number = {2},
  fjournal = {Inventiones Mathematicae},
  mrclass = {20H15 (17B35 20G05 22E47)},
  mrnumber = {560412},
  mrreviewer = {Vinay V. Deodhar}
 
}

@BOOK{lusztig2003hecke,
  title = {Hecke algebras with unequal parameters},
  publisher = {American Mathematical Society, Providence, RI},
  year = {2003},
  author = {Lusztig, G.},
  volume = {18},
  pages = {vi+136},
  series = {CRM Monograph Series},
  mrclass = {20C08 (20F55)},
  mrnumber = {1974442 (2004k:20011)},
  mrreviewer = {G{\"o}tz Pfeiffer}
}

@INCOLLECTION{lusztig1985A_n,
  author = {Lusztig, G.},
  title = {The two-sided cells of the affine {W}eyl group of type {$\tilde{A}_n$}},
  booktitle = {Infinite-dimensional groups with applications ({B}erkeley, {C}alif.,
	1984)},
  publisher = {Springer, New York},
  year = {1985},
  volume = {4},
  series = {Math. Sci. Res. Inst. Publ.},
  pages = {275--283},
  mrclass = {20G15 (20G05)},
  mrnumber = {823323 (87i:20081)}

}

@INCOLLECTION{lusztig1985cellsI,
  author = {Lusztig, G.},
  title = {Cells in affine {W}eyl groups},
  booktitle = {Algebraic groups and related topics ({K}yoto/{N}agoya, 1983)},
  publisher = {North-Holland, Amsterdam},
  year = {1985},
  volume = {6},
  series = {Adv. Stud. Pure Math.},
  pages = {255--287},
  mrclass = {20G15},
  mrnumber = {803338 (87h:20074)},
  mrreviewer = {Bhama Srinivasan}
}

@ARTICLE{Tr,
  author = {Trapa, P. E.},
  title = {Leading-term cycles of {H}arish-{C}handra modules and partial orders
	on components of the {S}pringer fiber},
  journal = {Compos. Math.},
  year = {2007},
  volume = {143},
  pages = {515--540},
  number = {2},
  doi = {10.1112/S0010437X06002545},
  fjournal = {Compositio Mathematica},
  issn = {0010-437X},
  mrclass = {22E47 (20G15)},
  mrnumber = {2309996},
  mrreviewer = {William M. McGovern},
  url = {https://doi.org/10.1112/S0010437X06002545}
}

@ARTICLE{Vo78,
  author = {Vogan, Jr., D. A.},
  title = {Gelfand-{K}irillov dimension for {H}arish-{C}handra modules},
  journal = {Invent. Math.},
  year = {1978},
  volume = {48},
  pages = {75--98},
  number = {1},
  fjournal = {Inventiones Mathematicae},
  mrclass = {17B35},
  mrnumber = {0506503},
  mrreviewer = {Anthony Joseph}
}

@Article{AJ4,
  author  = {Joseph, A.},
  journal = {Annales Scientifiques de l'?cole Normale Sup¨¦rieure},
  title   = {On the annihilators of the simple subquotients of the principal series},
  year    = {1978},
}

@Article{BX,
  author  = {Bai, Z. Q. and W. Xiao},
  title   = {Gelfand-Kirillov dimension and reducibility of scalar genralized Verma modules},
  journal = {Acta Math. Sin. (Engl. Ser.)},
  year = {2019},
  volume = {35},
  pages = {1854-1860},
  number = {11},
}

@Article{JJ,
  author  = {J. Jiang},
  title   = {Reducibility of scalar  generalized Verma modules of minimal parabolic type},
  journal = {arXiv: 2306.01231},
  year = {2023}
}

@Article{Geck,
  author  = {M. Geck},
  title   = {PyCox:computing with (finite) Coxter groups and Iwahori-Hecke algebras},
  journal = {J. Comput. Math.},
  year = {2012},
  volume = {15},
  pages = {231-256},
}

@article {CLW,
    AUTHOR = {Cheng, S.-J. and Lam, N. and Wang, W. Q.},
     TITLE = {Super duality and irreducible characters of ortho-symplectic
              {L}ie superalgebras},
   JOURNAL = {Invent. Math.},
  FJOURNAL = {Inventiones Mathematicae},
    VOLUME = {183},
      YEAR = {2011},
    NUMBER = {1},
     PAGES = {189--224},
      ISSN = {0020-9910,1432-1297},
   MRCLASS = {17B10},
  MRNUMBER = {2755062},
       DOI = {10.1007/s00222-010-0277-4},
       URL = {https://doi.org/10.1007/s00222-010-0277-4},
}

@article {Dong23,
    AUTHOR = {Dong, C.-P.},
     TITLE = {On the {H}elgason-{J}ohnson bound},
   JOURNAL = {Israel J. Math.},
  FJOURNAL = {Israel Journal of Mathematics},
    VOLUME = {254},
      YEAR = {2023},
    NUMBER = {1},
     PAGES = {373--397},
      ISSN = {0021-2172,1565-8511},
   MRCLASS = {22E46},
  MRNUMBER = {4591839},
MRREVIEWER = {William\ M.\ McGovern},
       DOI = {10.1007/s11856-022-2403-6},
       URL = {https://doi.org/10.1007/s11856-022-2403-6},
}

@article {Su,
    AUTHOR = {Su, Y. C.},
     TITLE = {Composition factors of {K}ac modules for the general linear {L}ie superalgebras},
   JOURNAL = {Math. Z.},
    VOLUME = {252},
      YEAR = {2006},
    NUMBER = {4},
     PAGES = {731--754},
      ISSN = {1432-1823},
       URL = {https://doi.org/10.1007/s00209-005-0874-x},
}

@article {CM21,
    AUTHOR = {Chen, C.-W. and Mazorchuk, V.},
     TITLE = {Simple supermodules over {L}ie superalgebras},
   JOURNAL = {Trans. Amer. Math. Soc.},
  FJOURNAL = {Transactions of the American Mathematical Society},
    VOLUME = {374},
      YEAR = {2021},
    NUMBER = {2},
     PAGES = {899--921},
      ISSN = {0002-9947,1088-6850},
   MRCLASS = {17B10 (16E30)},
  MRNUMBER = {4196382},
MRREVIEWER = {Iwan\ Praton},
       DOI = {10.1090/tran/8303},
       URL = {https://doi.org/10.1090/tran/8303},
}

@article {CM18,
    AUTHOR = {Coulembier, K. and Musson, I. M.},
     TITLE = {The primitive spectrum for {$\mathfrak{gl}(m|n)$}},
   JOURNAL = {Tohoku Math. J. (2)},
  FJOURNAL = {The Tohoku Mathematical Journal. Second Series},
    VOLUME = {70},
      YEAR = {2018},
    NUMBER = {2},
     PAGES = {225--266},
      ISSN = {0040-8735,2186-585X},
   MRCLASS = {16S30 (16D60 17B10)},
  MRNUMBER = {3810240},
MRREVIEWER = {Iwan\ Praton},
       DOI = {10.2748/tmj/1527904821},
       URL = {https://doi.org/10.2748/tmj/1527904821},
}

@article {SZ,
    AUTHOR = {Su, Y. C. and Zhang, R. B.},
     TITLE = {Generalised {J}antzen filtration of exceptional {L}ie superalgebras},
   JOURNAL = {Isr. J. Math.},
    VOLUME = {212},
      YEAR = {2016},
    NUMBER = {2},
     PAGES = {635--676},
       URL = {https://doi.org/10.1007/s11856-016-1301-1},
}

@article{BGXW,
  author    = {Bai, Z. Q. and Gao, F. and Wang, Y. T. and Xie, X.},
  title     = { Gelfand--{K}irillov dimensions and annihilator varieties of highest
 weight modules of exceptional type {L}ie algebras},
  journal   = {ArXiv:2509.24346},
year = {2025},
}

@article{ML,
title = {The representation theory of the exceptional {L}ie superalgebras {F}(4) and {G}(3)},
journal = {J. Algebra},
volume = {419},
pages = {167-222},
year = {2014},
issn = {0021-8693},
doi = {https://doi.org/10.1016/j.jalgebra.2014.07.016},
url = {https://www.sciencedirect.com/science/article/pii/S0021869314004049},
author = {L. Martirosyan},
}

@book {Mu12,
    AUTHOR = {Musson, I. M.},
     TITLE = {Lie superalgebras and enveloping algebras},
    SERIES = {Graduate Studies in Mathematics},
    VOLUME = {131},
 PUBLISHER = {American Mathematical Society, Providence, RI},
      YEAR = {2012},
     PAGES = {xx+488},
      ISBN = {978-0-8218-6867-6},
   MRCLASS = {17-02 (16S30 17B35)},
  MRNUMBER = {2906817},
MRREVIEWER = {Aleksandr\ Nikolaevich\ Sergeev},
       DOI = {10.1090/gsm/131},
       URL = {https://doi.org/10.1090/gsm/131},
}

@book {CW12,
    AUTHOR = {Cheng, S.-J. and Wang, W. Q.},
     TITLE = {Dualities and representations of {L}ie superalgebras},
    SERIES = {Graduate Studies in Mathematics},
    VOLUME = {144},
 PUBLISHER = {American Mathematical Society, Providence, RI},
      YEAR = {2012},
     PAGES = {xviii+302},
      ISBN = {978-0-8218-9118-6},
   MRCLASS = {17B10},
  MRNUMBER = {3012224},
MRREVIEWER = {Aleksandr\ Nikolaevich\ Sergeev},
       DOI = {10.1090/gsm/144},
       URL = {https://doi.org/10.1090/gsm/144},
}

@article {CW22,
    AUTHOR = {Cheng, S.-J. and Wang, W. Q.},
     TITLE = {Character formulae in category $\mathcal{O}$ for exceptional {L}ie superalgebra {G}(3)},
    VOLUME = {62},
   JOURNAL = {Kyoto J. Math.},
      YEAR = {2022},
     PAGES = {719--751},
       DOI = {10.1090/gsm/144},
       URL = {https://doi.org/10.1090/gsm/144},
}

@article {PS94,
    AUTHOR = {Penkov, I. and Serganova, V.},
     TITLE = {Generic irreducible representations of finite-dimensional
              {L}ie superalgebras},
   JOURNAL = {Internat. J. Math.},
  FJOURNAL = {International Journal of Mathematics},
    VOLUME = {5},
      YEAR = {1994},
    NUMBER = {3},
     PAGES = {389--419},
      ISSN = {0129-167X,1793-6519},
   MRCLASS = {17B10},
  MRNUMBER = {1274125},
MRREVIEWER = {Joris\ Van der Jeugt},
       DOI = {10.1142/S0129167X9400022X},
       URL = {https://doi.org/10.1142/S0129167X9400022X},
}

@incollection {Kac78,
    AUTHOR = {Kac, V. G.},
     TITLE = {Representations of classical {L}ie superalgebras},
 BOOKTITLE = {Differential geometrical methods in mathematical physics, {II}
              ({P}roc. {C}onf., {U}niv. {B}onn, {B}onn, 1977)},
    SERIES = {Lecture Notes in Math.},
    VOLUME = {676},
     PAGES = {597--626},
 PUBLISHER = {Springer, Berlin},
      YEAR = {1978},
      ISBN = {3-540-08935-7},
   MRCLASS = {17B70 (81E10)},
  MRNUMBER = {519631},
MRREVIEWER = {H.\ de Vries},
}

@article {Mu06,
    AUTHOR = {Musson, I. M.},
     TITLE = {Lie superalgebras, {C}lifford algebras, induced modules and
              nilpotent orbits},
   JOURNAL = {Adv. Math.},
  FJOURNAL = {Advances in Mathematics},
    VOLUME = {207},
      YEAR = {2006},
    NUMBER = {1},
     PAGES = {39--72},
      ISSN = {0001-8708,1090-2082},
   MRCLASS = {17B35 (16W55)},
  MRNUMBER = {2264065},
MRREVIEWER = {William\ M.\ McGovern},
       DOI = {10.1016/j.aim.2005.03.016},
       URL = {https://doi.org/10.1016/j.aim.2005.03.016},
}

@article {CL13,
    AUTHOR = {Cao, B. T. and Luo, L.},
     TITLE = {Trivial module for ortho-symplectic {L}ie superalgebras and
              {L}ittlewood's formula},
   JOURNAL = {Sci. China Math.},
  FJOURNAL = {Science China. Mathematics},
    VOLUME = {56},
      YEAR = {2013},
    NUMBER = {11},
     PAGES = {2251--2260},
      ISSN = {1674-7283,1869-1862},
   MRCLASS = {17B20 (17B10)},
  MRNUMBER = {3123569},
MRREVIEWER = {Stephen\ Slebarski},
       DOI = {10.1007/s11425-012-4535-3},
       URL = {https://doi.org/10.1007/s11425-012-4535-3},
}

@article {EH04,
    AUTHOR = {Enright, T. J. and M. Hunziker},
     TITLE = {Resolutions and {H}ilbert series of determinantal varieties
              and unitary highest weight modules},
   JOURNAL = {J. Algebra},
  FJOURNAL = {Journal of Algebra},
    VOLUME = {273},
      YEAR = {2004},
    NUMBER = {2},
     PAGES = {608--639},
      ISSN = {0021-8693,1090-266X},
   MRCLASS = {17B10 (22E46)},
  MRNUMBER = {2037715},
       DOI = {10.1016/S0021-8693(03)00159-5},
       URL = {https://doi.org/10.1016/S0021-8693(03)00159-5},
}

@book {FSS2000,
    AUTHOR = {Frappat, L. and Sciarrino, A. and Sorba, P.},
     TITLE = {Dictionary on {L}ie algebras and superalgebras},
      NOTE = {With 1 CD-ROM (Windows, Macintosh and UNIX)},
 PUBLISHER = {Academic Press, Inc., San Diego, CA},
      YEAR = {2000},
     PAGES = {xxii+410},
      ISBN = {0-12-265340-8},
   MRCLASS = {17-00 (17Bxx)},
  MRNUMBER = {1773773},
MRREVIEWER = {Aleksandr\ A.\ Mikhalev},
}

@article {Kac77,
    AUTHOR = {Kac, V. G.},
     TITLE = {Lie superalgebras},
   JOURNAL = {Adv. Math.},
  FJOURNAL = {Advances in Mathematics},
    VOLUME = {26},
      YEAR = {1977},
    NUMBER = {1},
     PAGES = {8--96},
      ISSN = {0001-8708},
   MRCLASS = {17B10 (17B65)},
  MRNUMBER = {486011},
MRREVIEWER = {H.\ de\ Vries},
       DOI = {10.1016/0001-8708(77)90017-2},
       URL = {https://doi.org/10.1016/0001-8708(77)90017-2},
}

@article {BJ24,
    AUTHOR = {Bai, Z. Q. and Jiang, J.},
     TITLE = {Gelfand--{K}irillov dimension and reducibility of scalar
              generalized {V}erma modules for classical {L}ie algebras},
   JOURNAL = {Acta Math. Sin. (Engl. Ser.)},
  FJOURNAL = {Acta Mathematica Sinica (English Series)},
    VOLUME = {40},
      YEAR = {2024},
    NUMBER = {3},
     PAGES = {658--706},
      ISSN = {1439-8516,1439-7617},
   MRCLASS = {17B10 (17B20 22E47)},
  MRNUMBER = {4712827},
MRREVIEWER = {Wei\ Xiao},
       DOI = {10.1007/s10114-024-2676-2},
       URL = {https://doi.org/10.1007/s10114-024-2676-2},
}

@article {SZ12,
    AUTHOR = {Su, Y. C. and Zhang, R. B.},
     TITLE = {Generalised {V}erma modules for the orthosymplectic {L}ie
              superalgebra {$\mathfrak{osp}_{k|2}$}},
   JOURNAL = {J. Algebra},
  FJOURNAL = {Journal of Algebra},
    VOLUME = {357},
      YEAR = {2012},
     PAGES = {94--115},
      ISSN = {0021-8693,1090-266X},
   MRCLASS = {17B10 (17B56)},
  MRNUMBER = {2905244},
MRREVIEWER = {J\"org\ Feldvoss},
       DOI = {10.1016/j.jalgebra.2012.01.026},
       URL = {https://doi.org/10.1016/j.jalgebra.2012.01.026},
}

@book {MR2000,
    AUTHOR = {Krause, G. R. and Lenagan, T. H.},
     TITLE = {Growth of algebras and {G}elfand-{K}irillov dimension},
    SERIES = {Graduate Studies in Mathematics},
    VOLUME = {22},
   EDITION = {Revised},
 PUBLISHER = {American Mathematical Society, Providence, RI},
      YEAR = {2000},
     PAGES = {x+212},
      ISBN = {0-8218-0859-1},
   MRCLASS = {16P90},
  MRNUMBER = {1721834},
MRREVIEWER = {Martha\ K.\ Smith},
       DOI = {10.1090/gsm/022},
       URL = {https://doi.org/10.1090/gsm/022},
}

@Article{BM,
  author  = {Z. Q. Bai and M. Hunziker},
  title   = {The {G}elfand–{K}irillov dimension of a unitary highest weight module},
  journal = {Science China-mathematics.},
  year    = {2015},
  volume  = {58},
  number  = {12},
  pages   = {2489-2498},
}

@article {GS10,
    AUTHOR = {Gruson, C. and Serganova, V.},
     TITLE = {Cohomology of generalized supergrassmannians and character
              formulae for basic classical {L}ie superalgebras},
   JOURNAL = {Proc. Lond. Math. Soc.},
  FJOURNAL = {Proceedings of the London Mathematical Society. Third Series},
    VOLUME = {101},
      YEAR = {2010},
    NUMBER = {3},
     PAGES = {852--892},
      ISSN = {0024-6115,1460-244X},
   MRCLASS = {17B10 (14F05)},
  MRNUMBER = {2734963},
       DOI = {10.1112/plms/pdq014},
       URL = {https://doi.org/10.1112/plms/pdq014},
}

\end{document}